\documentclass[a4paper,twoside]{article}
\usepackage{a4}
\usepackage{amssymb}
\usepackage{amsmath}
\usepackage{upref}
\usepackage[active]{srcltx}
\usepackage[pagebackref,colorlinks,citecolor=blue,linkcolor=blue,urlcolor=blue]{hyperref}
\usepackage[dvipsnames]{color}
\allowdisplaybreaks[2] 
\newcount\minutes \newcount\hours
\hours=\time
\divide\hours 60
\minutes=\hours
\multiply\minutes -60
\advance\minutes \time
\newcommand{\klockan}{\the\hours:{\ifnum\minutes<10 0\fi}\the\minutes}
\newcommand{\tid}{\today\ \klockan}
\newcommand{\prtid}{\smash{\raise 10mm \hbox{\LaTeX ed \tid}}}
\renewcommand{\prtid}{}
\makeatletter
\def\sectionmark#1{} 
\def\subsectionmark#1{}
\newcommand{\sectnr}{\ifnum \c@secnumdepth >\z@
                 \thesection.\hskip 1em\relax \fi}
\def\@evenhead{\footnotesize\rm\thepage\hfil\leftmark\hfil\llap{\prtid}}
\def\@oddhead{\footnotesize\rm\rlap{\prtid}\hfil\rightmark\hfil\thepage}
\def\tableofcontents{\section*{Contents} 
 \@starttoc{toc}}
\makeatother
\makeatletter
\def\@biblabel#1{#1.}
\makeatother
\makeatletter
\let\Thebibliography=\thebibliography
\renewcommand{\thebibliography}[1]{\def\@mkboth##1##2{}\Thebibliography{#1}
\addcontentsline{toc}{section}{References}
\frenchspacing 
\setlength{\@topsep}{0pt}
\setlength{\itemsep}{0pt}%
\setlength{\parskip}{0pt plus 2pt}%
}
\makeatother
\makeatletter
\def\mdots@{\mathinner.\nonscript\!.%
 \ifx\next,.\else\ifx\next;.\else\ifx\next..\else
 \nonscript\!\mathinner.\fi\fi\fi}
\let\ldots\mdots@
\let\cdots\mdots@
\let\dotso\mdots@
\let\dotsb\mdots@
\let\dotsm\mdots@
\let\dotsc\mdots@
\def\vdots{\vbox{\baselineskip2.8\p@ \lineskiplimit\z@
    \kern6\p@\hbox{.}\hbox{.}\hbox{.}\kern3\p@}}
\def\ddots{\mathinner{\mkern1mu\raise8.6\p@\vbox{\kern7\p@\hbox{.}}%
    \raise5.8\p@\hbox{.}\raise3\p@\hbox{.}\mkern1mu}}
\makeatother
\makeatletter
\let\Enumerate=\enumerate
\renewcommand{\enumerate}{\Enumerate%
\setlength{\itemsep}{0pt}%
\setlength{\parskip}{0pt plus 1pt}%
\renewcommand{\theenumi}{\textup{(\alph{enumi})}}%
\renewcommand{\labelenumi}{\theenumi}%
}
\makeatother
\makeatletter
\def\@seccntformat#1{\csname the#1\endcsname.\quad}
\makeatother
\RequirePackage{ifthen}
\newcommand{\authortitle}[3]{\author{#1}\title{#2}%
   \ifthenelse{\equal{#3}{}}{\markboth{#1}{#2}}{\markboth{#1}{#3}}}
\newcommand{\art}[6]{{\sc #1, \rm #2, \it #3 \bf #4 \rm (#5), \mbox{#6}.}}
\newcommand{\auth}[2]{{#1, #2.}}

\newcommand{\book}[3]{{\sc #1, \it #2, \rm #3.}}
\newcommand{\AND}{{\rm and }}

\RequirePackage{amsthm}
\newtheoremstyle{descriptive}%
  {\topsep}   
  {\topsep}   
  {\rmfamily} 
  {}          
  {\bfseries} 
  {.}         
  { }         
  {}          
\newtheoremstyle{propositional}%
  {\topsep}   
  {\topsep}   
  {\itshape}  
  {}          
  {\bfseries} 
  {.}         
  { }         
  {}          
\newtheoremstyle{remarkstyle}%
  {\topsep}   
  {\topsep}   
  {\rmfamily}  
  {}          
  {\itshape} 
  {.}         
  { }         
  {}          
\theoremstyle{propositional}
\newtheorem{thm}{Theorem}[section]
\newtheorem{prop}[thm]{Proposition}
\newtheorem{lem}[thm]{Lemma}
\newtheorem{cor}[thm]{Corollary}
\newtheorem*{ass}{General Assumptions}
\theoremstyle{descriptive}
\newtheorem{deff}[thm]{Definition}
\newtheorem{example}[thm]{Example}
\theoremstyle{remarkstyle}
\newtheorem{remark}[thm]{Remark}
\makeatletter
\renewenvironment{proof}[1][\proofname]{\par
  \pushQED{\qed}%
  \normalfont
  \trivlist
  \item[\hskip\labelsep
        \itshape
    #1\@addpunct{.}]\ignorespaces
}{%
  \popQED\endtrivlist\@endpefalse
}
\makeatother
\newcommand{\setm}{\setminus}
\renewcommand{\subsetneq}{\varsubsetneq}
\renewcommand{\emptyset}{\varnothing}
\def\vint{\mathop{\mathchoice%
          {\setbox0\hbox{$\displaystyle\intop$}\kern 0.22\wd0%
           \vcenter{\hrule width 0.6\wd0}\kern -0.82\wd0}%
          {\setbox0\hbox{$\textstyle\intop$}\kern 0.2\wd0%
           \vcenter{\hrule width 0.6\wd0}\kern -0.8\wd0}%
          {\setbox0\hbox{$\scriptstyle\intop$}\kern 0.2\wd0%
           \vcenter{\hrule width 0.6\wd0}\kern -0.8\wd0}%
          {\setbox0\hbox{$\scriptscriptstyle\intop$}\kern 0.2\wd0%
           \vcenter{\hrule width 0.6\wd0}\kern -0.8\wd0}}%
          \mathopen{}\int}

\newcommand{\Cp}{{C_p}}
\newcommand{\CpY}{{C_p^Y}}
\newcommand{\Cpmu}{{C_{p,\mu}}}
\DeclareMathOperator{\diam}{diam}
\DeclareMathOperator{\capp}{cap}
\newcommand{\cp}{\capp_p}
\DeclareMathOperator{\dist}{dist}
\DeclareMathOperator{\Lip}{Lip}
\newcommand{\Lipc}{{\Lip_c}}
\DeclareMathOperator{\spt}{supp}
\newcommand{\supp}{\spt}
\DeclareMathOperator*{\essliminf}{ess\,lim\,inf}
\let\Re\undefined \DeclareMathOperator{\Re}{Re}
\newcommand{\bdry}{\partial}
\newcommand{\bdy}{\bdry}
\newcommand{\loc}{_{\rm loc}}
\DeclareMathOperator{\para}{par}
{\catcode`p =12 \catcode`t =12 \gdef\eeaa#1pt{#1}}      
\def\accentadjtext#1{\setbox0\hbox{$#1$}\kern   
                \expandafter\eeaa\the\fontdimen1\textfont1 \ht0 }
\def\accentadjscript#1{\setbox0\hbox{$#1$}\kern 
                \expandafter\eeaa\the\fontdimen1\scriptfont1 \ht0 }
\def\accentadjscriptscript#1{\setbox0\hbox{$#1$}\kern   
                \expandafter\eeaa\the\fontdimen1\scriptscriptfont1 \ht0 }
\def\accentadjtextback#1{\setbox0\hbox{$#1$}\kern       
                -\expandafter\eeaa\the\fontdimen1\textfont1 \ht0 }
\def\accentadjscriptback#1{\setbox0\hbox{$#1$}\kern     
                -\expandafter\eeaa\the\fontdimen1\scriptfont1 \ht0 }
\def\accentadjscriptscriptback#1{\setbox0\hbox{$#1$}\kern 
                -\expandafter\eeaa\the\fontdimen1\scriptscriptfont1 \ht0 }
\def\itoverline#1{{\mathsurround0pt\mathchoice
        {\rlap{$\accentadjtext{\displaystyle #1}
                \accentadjtext{\vrule height1.593pt}
                \overline{\phantom{\displaystyle #1}
                \accentadjtextback{\displaystyle #1}}$}{#1}}
        {\rlap{$\accentadjtext{\textstyle #1}
                \accentadjtext{\vrule height1.593pt}
                \overline{\phantom{\textstyle #1}
                \accentadjtextback{\textstyle #1}}$}{#1}}
        {\rlap{$\accentadjscript{\scriptstyle #1}
                \accentadjscript{\vrule height1.593pt}
                \overline{\phantom{\scriptstyle #1}
                \accentadjscriptback{\scriptstyle #1}}$}{#1}}
        {\rlap{$\accentadjscriptscript{\scriptscriptstyle #1}
                \accentadjscriptscript{\vrule height1.593pt}
                \overline{\phantom{\scriptscriptstyle #1}
                \accentadjscriptscriptback{\scriptscriptstyle #1}}$}{#1}}}}
\def\itunderline#1{{\mathsurround0pt\mathchoice
        {\rlap{$\underline{\phantom{\displaystyle #1}
                \accentadjtextback{\displaystyle #1}}$}{#1}}
        {\rlap{$\underline{\phantom{\textstyle #1}
                \accentadjtextback{\textstyle #1}}$}{#1}}
        {\rlap{$\underline{\phantom{\scriptstyle #1}
                \accentadjscriptback{\scriptstyle #1}}$}{#1}}
        {\rlap{$\underline{\phantom{\scriptscriptstyle #1}
                \accentadjscriptscriptback{\scriptscriptstyle #1}}$}{#1}}}}
\newcommand{\al}{\alpha}
\newcommand{\Om}{\Omega}
\renewcommand{\phi}{\varphi}
\newcommand{\eps}{\varepsilon}
\newcommand{\de}{\delta}
\newcommand{\ga}{\gamma}
\newcommand{\la}{\lambda}
\newcommand{\La}{\Lambda}
\newcommand{\p}{{$p\mspace{1mu}$}}
\newcommand{\R}{\mathbf{R}}
\newcommand{\C}{\mathbf{C}}
\newcommand{\eR}{{\overline{\R}}}
\newcommand{\Np}{N^{1,p}}
\newcommand{\Dp}{D^{p}}
\newcommand{\Nploc}{N^{1,p}\loc}
\newcommand{\A}{\ensuremath{\mathcal{A}}}%
\newcommand{\UU}{\mathcal{U}}%
\newcommand{\clOm}{{\overline{\Om}}}
\newcommand{\Ga}{\Gamma}
\newcommand{\Lploc}{L^{p}\loc}
\newcommand{\clG}{\itoverline{G}}     
\newcommand{\uP}{\itoverline{P}}     
\newcommand{\lP}{\itunderline{P}} 
\newcommand{\ut}{\tilde{u}}
\newcommand{\zpm}{z^\pm}
\newcommand{\zpl}{z^+}
\newcommand{\zmin}{z^-}
\newcommand{\apm}{a^\pm}
\newcommand{\apl}{a^+}
\newcommand{\amin}{a^-}
\newcommand{\Ipm}{I^\pm}
\newcommand{\Ipl}{I^+}
\newcommand{\Imin}{I^-}
\newcommand{\Xplus}{X_+}  
\newcommand{\clB}{\itoverline{B}}
\newcommand{\bdystar}{\partial^*}
\newcommand{\clV}{\overline{V}}
\newcommand{\simge}{\gtrsim}
\newcommand{\simle}{\lesssim}
\newcommand{\binfty}{{\boldsymbol{\infty}}}
\newcommand{\Rn}{\mathbf{R}^n}

\numberwithin{equation}{section}
\newcommand{\imp}{\ensuremath{\mathchoice{\quad \Longrightarrow \quad}{\Rightarrow}
                {\Rightarrow}{\Rightarrow}}}

\begin{document}
%
%

\authortitle{Anders Bj\"orn and Jana Bj\"orn}
{Liouville theorems and removable sets for  \\
bounded \p-harmonic and   quasiharmonic  functions \\
  on metric spaces under local assumptions}
{Liouville theorems and removable 
sets for bounded \p-harmonic 
functions}

\author{
Anders Bj\"orn \\
\it\small Department of Mathematics, Link\"oping University, SE-581 83 Link\"oping, Sweden\\
\it \small anders.bjorn@liu.se, ORCID\/\textup{:} 0000-0002-9677-8321
\\
\\
Jana Bj\"orn \\
\it\small Department of Mathematics, Link\"oping University, SE-581 83 Link\"oping, Sweden\\
\it \small jana.bjorn@liu.se, ORCID\/\textup{:} 0000-0002-1238-6751
}

\date{Preliminary version, \today}
\date{}
\maketitle

\noindent {\small {\bf Abstract}.  
For connected proper metric spaces $X$, equipped with a locally doubling measure
supporting a local \p-Poincar\'e inequality, we completely characterize which compact sets $K$ with positive
capacity are removable for  bounded \p-harmonic functions, $p>1$.
Similar results are proved also for bounded quasiharmonic functions.
The characterization is both in geometric and analytic terms.
In particular, removability is shown to be equivalent to the validity of a Liouville type theorem
in $X\setminus K$.
Properties such as local connectedness, sequential annular quasiconvexity, concentration of capacity
and \p-parabolicity are identified as crucial for removability.
Along the way, we give a rather elementary proof of the Liouville theorem 
for quasisuperharmonic functions in \p-parabolic  spaces.
Our results apply in particular to manifolds and $\mathbf{R}^n$ equipped with (locally) 
\p-admissible weights.
}

\bigskip
\noindent {\small \emph{Key words and phrases}:
annular quasiconvexity,
bounded \p-harmonic function,
bounded quasiharmonic function,
Liouville theorem,
local Poincar\'e inequality, 
locally doubling measure,
manifold,
metric space,
removable set.
}

\medskip
\noindent {\small \emph{Mathematics Subject Classification} (2020):
Primary: 
31E05, 
Secondary:  
30L99, 
31C12, 
31C45, 
35J92, 
46E36, 
49Q20. 
}

\medskip

\noindent {\small \emph{Funding}: 
A.~B. resp.\ J.~B. were supported by the Swedish Research Council,
grants 2020-04011 and 2024-04095 resp.\ 2022-04048.
}



\section{Introduction}

In this paper, we study removable singularities 
for bounded \p-harmonic and quasiharmonic functions
in rather general settings.
We assume that  $X$ is a proper connected metric space
equipped with a locally doubling measure $\mu$ that
supports a local \p-Poincar\'e inequality, where $1<p<\infty$.
Let $K \subsetneq \Om$ be a compact subset of an open set $\Om\subset X$.

We say that $K$ is \emph{removable for bounded \p-harmonic 
functions} in $\Om \setm K$
if  every bounded \p-harmonic function in $\Om \setm K$
has a bounded \p-harmonic  extension  to $\Om$.
The definition for $Q$-quasiharmonic functions is similar.
Here, a continuous function $u$ is \emph{$Q$-quasiharmonic}
if it quasiminimizes (up to a factor $Q \ge 1$)
the \p-energy integral 
\[ 
      \int g^p_u \, d\mu,
\] 
see Definition~\ref{def-quasimin}.
When $Q=1$, the function $u$ is called \emph{\p-harmonic}.

It is well known that sets with Sobolev \p-capacity 
$\Cp(K)=0$ are always removable (see Theorem~\ref{thm-removability-qharm}).
At the same time, even in unweighted $\Rn$, $n \ge 2$,
sets with $\Cp(K)>0$ can be removable,
namely when $\Om=\Rn$, $p>n$ and $K$ is a singleton.
A full characterization of (relatively closed) removable sets 
in $\Rn$, $n \ge 2$, equipped with a \p-admissible weight
was given in Theorem~1.3 in~\cite{ABRnremove}.

The main focus of this paper is on removability of
compact sets with positive \p-capacity.
The following  simple one-dimensional examples    
illustrate that seemingly similar spaces and sets can 
have different removability properties.
They also show that the measure on $X$ plays an important role
for removability.
All these examples satisfy our general assumptions.
Further examples of ``big'' removable sets are given
in Section~\ref{sect-examples}.

\begin{example} \label{ex-interval-new}
(a)
Clearly, no compact set $K\ne\emptyset$ can be removable for 
$\R\setm K$ because it disconnects the space and 
nonconstant piecewise constant functions
in $\R\setm K$ are \p-harmonic but do not have 
bounded \p-harmonic extensions to $\R$,
because every bounded \p-harmonic function on $\R$ is constant.

(b)
On the other hand, both the singleton $K=\{0\}$ and the interval
$K=[0,1]$ have $\Cp(K)>0$ but
are removable for $X\setm K$ when $X=[0,\infty)$
or $X=[0,2]$.
Indeed, 
any bounded \p-harmonic 
function in $X \setm K$ must be constant,
and extends trivially to $X$.
The same argument shows that $K$ is removable
for bounded $Q$-quasiharmonic 
function in $X \setm K$.

(c)
At the same time, if $X=[0,\infty)$ is equipped with the
weighted measure $d\mu=w\,dx$, where $w(x)=\max\{x^\al,1\}$, $\al>p-1$,
then the function
\[
u(x)=\int_0^{x} w^{1/(1-p)}\,dt
\] 
is bounded and \p-harmonic in $(0,\infty)$
with respect to $\mu$, but its unique continuous extension to $X=[0,\infty)$
is not \p-harmonic.
So $K=\{0\}$ is not removable in this case.
Note that $X$  (equipped with $\mu$) is a \p-hyperbolic space.
\end{example}

\begin{example} \label{ex-circle-new}
Using complex notation, let $X=\{z \in \C : |z|=1\}$ 
equipped with the one-dimensional Lebesgue measure.
Then $u(z)=\arg z$ (the principal branch) 
is a nonconstant
bounded \p-harmonic function in $X \setm K$,
both for $K=\{-1\}$ and $K=\{z : \Re z \le 0\}$.
Since all bounded \p-harmonic functions in $X$ are constant,
by the strong maximum principle, we see that
$K$ is not removable for bounded \p-harmonic
functions in $X \setm K$.
Note  that $X \setm K$ is not locally connected at $z=-1$
when $K=\{-1\}$, but that $X \setm K$ is locally 
connected at the two points in $\bdy K = \{\pm i\}$
when $K=\{z : \Re z \le 0\}$.
\end{example}

Our first two main results 
(Theorem~\ref{thm-main-char-intro} and Proposition~\ref{prop-not-locconn})
show that the properties appearing in the above examples
(disconnectedness, \p-hyperbolicity, lack of local connectedness, 
and that the capacity of $\bdy K$ is not concentrated at one point)
are crucial obstacles for removability
(of compact sets with positive capacity).
The following result  
characterizes compact removable sets with positive capacity.

\begin{thm} \label{thm-main-char-intro}
Assume that   $X$ is a proper connected metric space
equipped with a locally doubling measure $\mu$ that
supports a local \p-Poincar\'e inequality, where $1<p<\infty$.

Let  $\Om\subset X$ be  open, $Q\ge1$ and
$K \subsetneq \Om$ be compact with $\Cp(K)>0$.
Then the following are equivalent for $G:=\Om \setm K$\/\textup:
\begin{enumerate}
\item \label{f-rem}
$K$ is removable for bounded $Q$-quasiharmonic functions in $G$.
\item \label{f-weak}
Every bounded $Q$-quasiharmonic 
function in $G$ has a quasiharmonic extension to $\Om$
\textup(not necessarily with the same $Q$\textup).
\item \label{f-Li}
The Liouville theorem holds for bounded $Q$-quasiharmonic functions in $G$,
i.e.\ all bounded $Q$-quasiharmonic function in $G$ are constant.
In particular, $G$ is connected.
\item \label{f-char}
All of the following conditions hold\/\textup:
\begin{enumerate}
\renewcommand{\theenumii}{\textup{(\roman{enumii})}}%
\renewcommand{\labelenumii}{\theenumii}%
\makeatletter
\renewcommand\p@enumii{}
\makeatother
\item \label{rs-par}
$X$ is \p-parabolic or bounded,
\item \label{rs1+2}
there is $x_0 \in \bdy K$ such that $\Cp(\bdy G \setm \{x_0\})=0$,
\item \label{rs-lim}
the limit 
\begin{equation}   \label{eq-lim-ex}
\lim_{G \ni y \to x_0} u(y) \quad \text{exists}
\end{equation}
for every bounded $Q$-quasiharmonic function $u$ in~$G$.
\end{enumerate}
\end{enumerate}
In particular, with $Q=1$, the 
equivalences hold for \p-harmonic functions.
\end{thm}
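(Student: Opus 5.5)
I would prove the cycle \ref{f-rem} $\Rightarrow$ \ref{f-weak} $\Rightarrow$ \ref{f-Li} $\Rightarrow$ \ref{f-char} $\Rightarrow$ \ref{f-rem}. The first implication is trivial. For \ref{f-weak} $\Rightarrow$ \ref{f-Li}, the plan is to use the positive capacity of $K$ to produce a bounded nonconstant quasiharmonic function in $G$ whenever some bounded $Q$-quasiharmonic $u$ in $G$ is nonconstant. The key observation is that if every bounded $Q$-quasiharmonic function in $G$ extends quasiharmonically to $\Om$, then so does the \p-harmonic (Perron) solution $v$ in $G$ with boundary data $\chi_K$. Such a $v$ is \p-harmonic, hence $1$-quasiharmonic and thus $Q$-quasiharmonic. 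Since $\Cp(K)>0$, $v$ attains the value $1$ on $K$ at quasievery regular point and tends to $0$ at $\bdy\Om$ (or at infinity in the hyperbolic case). Its extension would therefore attain an interior maximum on $K$, and the strong maximum principle for quasiharmonic functions forces it to be constant, a contradiction unless $G$ admits no such nonconstant functions.

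More concretely for \ref{f-Li}: assume $u$ is bounded, $Q$-quasiharmonic and nonconstant in $G$, and let $\tilde u$ be its quasiharmonic extension. I would then consider the compact set $K$ and apply the maximum principle on $\Om$ to suitable combinations. This step is delicate because combinations of quasiminimizers are not quasiminimizers. I would instead argue via the following components. If $G$ is disconnected, take $u$ locally constant on components, $1$ on one and $0$ elsewhere. Its extension $\tilde u$ is continuous on the connected set $\Om$, and the strong maximum principle applied at a point of $K$ adjacent to both components gives a contradiction. In the connected case, I would use the \p-harmonic measure of $K$ relative to $\Om$ when $\Om\ne X$ or $X$ is \p-hyperbolic, which is bounded and nonconstant, so its extension attains its maximum $1$ inside $\Om$. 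When $\Om=X$ and $X$ is parabolic or bounded, the extension is bounded \p-harmonic on $X$, hence constant by the Liouville theorem for parabolic spaces (proved later in the paper) or by the strong maximum principle in the compact case. I expect this last case to show that $u$ itself is constant.

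For \ref{f-Li} $\Rightarrow$ \ref{f-char}, I would argue \ref{rs-par} by contradiction: if $X$ is unbounded and \p-hyperbolic, the capacitary potential of $K$ relative to $X$ is a bounded nonconstant \p-harmonic function on $G$. Similarly, if $\Om\ne X$, the harmonic measure of $\bdy\Om$ is nonconstant, which forces $\Om=X$ up to capacity-zero boundary. For \ref{rs1+2}, suppose $\bdy G$ contains two disjoint compact pieces $E_1,E_2$ of positive capacity. Solving the Dirichlet problem with data $1$ near $E_1$ and $0$ near $E_2$ gives a nonconstant bounded \p-harmonic function, using the Kellogg property to guarantee boundary regularity q.e. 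Then \ref{rs-lim} is trivial since all such $u$ are constant.

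For \ref{f-char} $\Rightarrow$ \ref{f-rem}, take $u$ bounded $Q$-quasiharmonic on $G$, with limit $c$ at $x_0$. Since $\Cp(\bdy G\setm\{x_0\})=0$, the bounded function $u$ extends across that set, and parabolicity or boundedness lets $u-c$ be compared with the constant via a Liouville/maximum principle on $X\setm\{x_0\}$. This step uses the Liouville theorem for quasisuperharmonic functions in parabolic spaces and removability of capacity-zero sets (Theorem~\ref{thm-removability-qharm}). The conclusion is $u\equiv c$, and then the constant extends. I expect the main obstacle to be this last step, namely showing that the existence of the limit at the single point $x_0$ plus parabolicity forces constancy without a comparison principle for quasiminimizers.
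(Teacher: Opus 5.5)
Your proposal has a genuine gap in the only substantive direction, \ref{f-char}$\Rightarrow$\ref{f-rem}. You leave it as ``the main obstacle'', and the route you sketch aims at the wrong set. First extend $u$ across the capacity-zero set $\bdy G\setm\{x_0\}$ by Theorem~\ref{thm-removability-qharm}. The extension then lives on $G_0:=\clG\setm\{x_0\}$, not on $X\setm\{x_0\}$; $G_0$ is open with $\bdy G_0=\{x_0\}$ by Lemma~\ref{lem-G_0-open}. For example, with $X=[0,2]$, $K=[0,1]$ and $x_0=1$ one gets $G_0=(1,2]$, while $u$ is undefined on $[0,1)$. Nor can you simply quote Theorem~\ref{thm-Liouville-para} on $X$ or on $\clG$. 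Since $\Cp(\{x_0\})>0$, nothing tells you that $\ut$, extended by its limit $c$ at $x_0$, is quasiharmonic across $x_0$ --- that is exactly what must be shown.

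The missing idea is the one-boundary-point Liouville lemma (Lemma~\ref{lem-quasi-Liouville-Om}). If $\bdy V=\{x_0\}$ and $V$ is bounded or a \p-parabolic set, then every quasisuperharmonic $w$ in $V$ that is bounded from below satisfies $w\ge\liminf_{V\ni y\to x_0}w(y)$. Its proof needs no comparison principle:
\begin{itemize}
\item Normalize so that $w>0$ and the liminf is $1$, and let $v=\min\{w+\de,1\}$, so $v\equiv1$ near $x_0$.
\item Take $0\le\eta\le1$ with $\eta=1$ on $B(x_0,1/\eps)$ and $\int_V g_\eta^p\,d\mu<\eps$.
\item Test the quasisuperminimizing property of $v$ with $\phi=(\eta-v)_+$. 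This $\phi$ lies in $\Np_0(V)$ precisely because it vanishes near the only boundary point and outside a large ball.
\item Since $v+\phi=\eta$ on $\{\phi\ne0\}$, you get $\int_{V\cap B(x_0,1/\eps)}g_{(1-v)_+}^p\,d\mu\le Q\eps$.
\item Let $\eps\to0$ and apply the Poincar\'e inequality to $v$ extended by $1$ outside $V$. This gives $v\ge1$, hence $w\ge1-\de$.
\end{itemize}
Apply this to $\pm\ut$ on $G_0$, which is bounded or a \p-parabolic set because $X$ is \p-parabolic or bounded. This yields $\ut\equiv c$. Theorem~\ref{thm-Liouville-para} is itself deduced from this lemma, so it cannot replace the lemma here.

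There are also two smaller points in the other directions. First, your claim that in the hyperbolic case the harmonic measure of $K$ tends to $0$ at $\binfty$ is false in general, because $\binfty$ need not be regular. In Example~\ref{ex-two-R}, $\uP_{X\setm K}\chi_K\equiv1$ on the two \p-parabolic ends. What you need is only nonconstancy, and that requires an argument. One option is to compare with $\min\{1,g/\min_K g\}$ for a global Green function $g$ with $\inf_X g=0$. Another, as in the proof of Theorem~\ref{thm-cpt-pharm-nec}, is to use a bounded Green function with pole at a point of $\bdy K$ of positive capacity; this is why the paper establishes the capacity condition first. Second, your case split should be $\Cp(X\setm\Om)>0$ versus $\Cp(X\setm\Om)=0$, not $\Om\ne X$ versus $\Om=X$. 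Theorem~\ref{thm-removability-qharm} reduces the second case to $\Om=X$.

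With these repairs, your \ref{f-weak}$\Rightarrow$\ref{f-Li} is valid: exclude $\Cp(X\setm\Om)>0$ and hyperbolicity, extend $\ut$ to $X$, and apply Theorem~\ref{thm-Liouville-para}. It is a genuine shortcut that bypasses \ref{rs1+2} and \ref{rs-lim}. However, it does nothing for the converse direction above.
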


Theorem~\ref{thm-LL-F} shows that $X$ in \ref{rs-par} 
can equivalently be replaced by $G$. 
The geometric conditions~\ref{rs-par}--\ref{rs1+2} in~\ref{f-char}
are easy to understand and independent of $Q$,
but the validity of \ref{rs-lim} is less clear.
We will show that, with
$G_0=\clG \setm \{x_0\}$ (which is open by \ref{rs1+2}
and Lemma~\ref{lem-G_0-open}),
\begin{align}
  &\text{$G_0$ is sequentially annularly quasiconvex at $x_0$ 
                 (Definition~\ref{def:ann-qcvx})} \nonumber\\
  & \qquad \imp \text{\ref{rs-lim}} 
  \imp \text{$G_0$ is locally connected at $x_0$},
\label{eq-seq=>lim=>loc-conn}
\end{align}
see Lemma~\ref{lem-lim-ann-quasiconvex} and Proposition~\ref{prop-not-locconn}.
We do not know if \ref{rs-lim} is
independent of $Q$.

Examples~\ref{ex-factorial2} and~\ref{ex-factorial3}, which may be
of independent interest, show that neither implication 
in~\eqref{eq-seq=>lim=>loc-conn} can be reversed.
To our knowledge, they seem to be the first examples 
with a globally doubling measure supporting a global \p-Poincar\'e inequality
such that $X \setm \{x_0\}$ is locally connected, 
but \emph{not annularly quasiconvex} at  $x_0$.
Examples~\ref{ex-hyp-R} and~\ref{ex-two-R} show that 
Theorem~\ref{thm-main-char-intro} does not hold if the
compact set $K$ therein is replaced by an unbounded closed set.

\begin{cor} \label{cor-X-one-pt}
Assume that   $X$ is a proper connected metric space
equipped with a locally doubling measure $\mu$ that
supports a local \p-Poincar\'e inequality, where $1<p<\infty$.
Let $Q \ge 1$.

If $X \setm \{x_0\}$  is sequentially
annularly quasiconvex at $x_0$, then
$\{x_0\}$ is removable for bounded 
$Q$-quasiharmonic
functions in $X \setm \{x_0\}$
if and only if at least one of the following conditions holds\/\textup:
\begin{enumerate}
\item
$X$ is  \p-parabolic or bounded,
\item 
$\Cp(\{x_0\})=0$.
\end{enumerate}
\end{cor}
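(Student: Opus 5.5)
The plan is to derive Corollary~\ref{cor-X-one-pt} from Theorem~\ref{thm-main-char-intro} with $\Om=X$ and $K=\{x_0\}$, so that $G=X\setm\{x_0\}$. We treat the capacity-zero case separately.

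If $\Cp(\{x_0\})=0$, then $\{x_0\}$ is removable by the well-known removability of sets of zero capacity (Theorem~\ref{thm-removability-qharm}). So the ``if'' direction holds in this case, and the ``only if'' direction is trivially satisfied by condition~(b). From now on we assume $\Cp(\{x_0\})>0$, so that Theorem~\ref{thm-main-char-intro} applies. We also need $K\subsetneq\Om$, that is, $X\ne\{x_0\}$. If $X$ is a single point, then $X$ is bounded and removability is vacuous, so this degenerate case can be set aside.

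Under $\Cp(\{x_0\})>0$, removability is equivalent to conditions \ref{rs-par}--\ref{rs-lim} of Theorem~\ref{thm-main-char-intro}. Condition~\ref{rs-par} is exactly~(a). For condition~\ref{rs1+2}, note that $\bdy K=\{x_0\}$, which is nonempty since $X$ is connected and $X\ne\{x_0\}$. Also $\bdy G\subset\{x_0\}$, so $\Cp(\bdy G\setm\{x_0\})=\Cp(\emptyset)=0$, and \ref{rs1+2} holds automatically with this $x_0$. For condition~\ref{rs-lim}, observe that $G_0=\clG\setm\{x_0\}=X\setm\{x_0\}$. By hypothesis $G_0$ is sequentially annularly quasiconvex at $x_0$, so the first implication in~\eqref{eq-seq=>lim=>loc-conn}, i.e.\ Lemma~\ref{lem-lim-ann-quasiconvex}, gives that the limit~\eqref{eq-lim-ex} exists for every bounded $Q$-quasiharmonic $u$ in $G$. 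Hence \ref{rs-lim} holds automatically too. Therefore removability is equivalent to~(a), and the corollary follows by combining the two cases.

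The main point to check is that Lemma~\ref{lem-lim-ann-quasiconvex} applies exactly in this configuration. One needs $\clG=X$, which holds because $\{x_0\}$ is not an open set when $X$ is connected and has more than one point. With $\clG=X$ we get $G_0=X\setm\{x_0\}$, matching the hypothesis of the corollary. Beyond this identification, no real obstacle is expected; everything else is bookkeeping of the cases $\Cp(\{x_0\})=0$ versus $>0$.
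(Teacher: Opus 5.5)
Your proposal is correct and follows essentially the same route as the paper: treat $\Cp(\{x_0\})=0$ via Theorem~\ref{thm-removability-qharm}, and otherwise apply Theorem~\ref{thm-main-char-intro} with $\Om=X$, $K=\{x_0\}$. In that second case condition~\ref{rs1+2} is automatic and condition~\ref{rs-lim} follows from Lemma~\ref{lem-lim-ann-quasiconvex}, so removability reduces to~\ref{rs-par}. The single-point case you set aside is already excluded by the paper's standing assumption that $X$ contains at least two points.
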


Note that  the sequential
annular quasiconvexity in Corollary~\ref{cor-X-one-pt}
cannot be replaced by local connectedness, see
Example~\ref{ex-factorial2}.

Let $X$ be a connected $n$-dimensional Riemannian (or Finsler) manifold 
with or without a boundary,
i.e.\ $X=M \cup \bdy M$, where the manifold boundary
$\bdy M =\emptyset$ if $M$ is complete.
Then the canonical volume form $dV$ (= the normalized $n$-dimensional
Hausdorff measure) is locally doubling and 
supports a local \p-Poincar\'e inequality on $X$.
More generally, if we let $d\mu= w\,dV$, with a 
weight $0< w \in C(X)$, then $\mu$ is also 
locally doubling and 
supports a local \p-Poincar\'e inequality on $X$.
(If $0<w \in C^\infty(X)$, then the Riemannian manifold $X$, 
equipped with $\mu$,
is a so-called \emph{smooth metric measure space}.)
Hence, with the local assumptions in this paper, 
connected Riemannian (and Finsler) manifolds with  boundary and
equipped with
such measures are covered by our results,
provided that $X=M \cup \bdy M$ is proper.
Since Riemannian (and Finsler) manifolds (of dimension $n \ge 2$)
are sequentially
annularly quasiconvex we have the following result,
including the measures $dV$ and $\mu$ mentioned above.
The manifolds are not required to be oriented.

\begin{cor} \label{cor-mfld}
Let $X$ be a connected Riemannian\/ \textup(or Finsler\/\textup)
manifold with or without
boundary.  
Assume that $\dim X \ge 2$ and that $X$
is proper and
equipped with a locally doubling measure $\mu$ that
supports a local \p-Poincar\'e inequality.
Let $\Om \subset X$ be a nonempty open set and $K \subsetneq \Om$ be compact.

Then $K$ is removable for bounded $Q$-quasiharmonic functions in $\Om \setm K$
if and only if  at least one of the following conditions holds\/\textup:
\begin{enumerate}
\item $\Cp(K)=0$,
\item $X$ is \p-parabolic or bounded, and
$\Cp(K \setm \{x_0\})=\Cp(X\setm \Om)=0$ for some $x_0 \in K$.
\end{enumerate}
\end{cor}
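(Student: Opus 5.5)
We will derive Corollary~\ref{cor-mfld} from Theorem~\ref{thm-main-char-intro}, from the fact that sets of zero capacity are removable (Theorem~\ref{thm-removability-qharm}), and from the implication in~\eqref{eq-seq=>lim=>loc-conn} that sequential annular quasiconvexity of $G_0$ at $x_0$ gives condition~\ref{rs-lim} (Lemma~\ref{lem-lim-ann-quasiconvex}). Throughout, $G=\Om\setm K$.

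We first treat the case $\Cp(K)=0$. Then $K$ is removable by Theorem~\ref{thm-removability-qharm}, so~(a) is sufficient. From now on we assume $\Cp(K)>0$, so that Theorem~\ref{thm-main-char-intro} applies. It shows that removability is equivalent to the three conditions \ref{rs-par}, \ref{rs1+2} and \ref{rs-lim}.

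We next translate~\ref{rs1+2} into the form used in~(b). Since $G=\Om\setm K$ with $K\subset\Om$ compact, we have $\bdy G\subset \bdy K\cup\bdy\Om$ and $\bdy K\subset\bdy G$. Hence $\Cp(\bdy G\setm\{x_0\})=0$ is equivalent to $\Cp(\bdy K\setm\{x_0\})=0$ together with $\Cp(\bdy\Om)=0$.
\begin{itemize}
\item \emph{From $\bdy K$ to $K$.} We need $\Cp(K\setm\{x_0\})=0$. If $\Cp(\bdy K\setm\{x_0\})=0$ but $\operatorname{int}K\ne\emptyset$, then $\operatorname{int}K$ is a nonempty open set and so has positive capacity. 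We must exclude this. Since $\operatorname{int}K$ is open and $\Cp(\bdy K \setm\{x_0\})=0$, the set $\bdy K\setm\{x_0\}$ has empty interior relative to the manifold. Sets of zero capacity do not separate a connected manifold of dimension $\ge2$ (removing a zero-capacity closed set leaves a connected open set). Hence $\operatorname{int}K$, which is separated from $\Om\setm K$ by $\bdy K$, would have to be all of the component, i.e.\ $K\supset$ a component of $\Om$. This is impossible, since $K\subsetneq\Om$ is compact while $X$ is connected, and then $\bdy\Om$ would have positive capacity. I expect this topological step to be the main obstacle. The alternative I would try is to note that $\bdy K\setm\{x_0\}$ must be nonempty when $\operatorname{int} K\neq\emptyset$ has positive capacity, and that a relatively closed zero-capacity set cannot be the boundary of an open set with nonempty exterior in a connected manifold of dimension $\ge2$.
\item \emph{From $\bdy\Om$ to $X\setm\Om$.} Similarly, $\Cp(\bdy\Om)=0$ forces $\Om$ to be dense and $X\setm\Om=\bdy\Om$. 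Otherwise the exterior of $\Om$ would be separated from $\Om$ by a zero-capacity set, which again cannot happen in a connected manifold of dimension $\ge2$. So $\Cp(X\setm\Om)=0$.
\end{itemize}
The converse directions of both points are trivial.

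Finally we verify~\ref{rs-lim}. Under~(b), $G_0=\clG\setm\{x_0\}$ is a manifold (with boundary) minus a closed set of zero capacity. A Riemannian or Finsler manifold of dimension $\ge2$ is sequentially annularly quasiconvex at every point, as noted before the corollary. Removing a closed zero-capacity set preserves this, because curves can be perturbed to avoid zero-capacity sets (these sets are $\Cp$-null and have $p$-modulus zero curve families). Hence Lemma~\ref{lem-lim-ann-quasiconvex} gives~\ref{rs-lim}. Alternatively, one may first use Theorem~\ref{thm-removability-qharm} to reduce to $\Om=X$ and $K=\{x_0\}$, and then apply Corollary~\ref{cor-X-one-pt} directly. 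Combining everything, $K$ is removable if and only if~(a) holds, or~\ref{rs-par} holds together with the capacity conditions in~(b), which is the statement.
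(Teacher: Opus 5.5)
Your overall route matches the paper's: Theorem~\ref{thm-removability-qharm} when $\Cp(K)=0$, and otherwise Theorem~\ref{thm-main-char-intro} together with annular quasiconvexity at $x_0$ via Lemma~\ref{lem-lim-ann-quasiconvex}. However, your step ``from $\bdy K$ to $K$'', which translates condition~\ref{rs1+2} into~(b) and which the paper leaves implicit, does not work as written. You argue that zero-capacity sets do not separate. But the set separating $\operatorname{int}K$ from $X\setm K$ is $\bdy K=\{x_0\}\cup(\bdy K\setm\{x_0\})$, and in the relevant case $\Cp(\{x_0\})>0$. Non-separation by zero-capacity sets (Lemma~\ref{lem-Cp-connected}\ref{it-conn}) holds under the standing assumptions regardless of dimension. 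So your reasoning would apply equally in dimension one, where the conclusion is false. For $X=\Om=[0,2]$ and $K=[0,1]$ (Example~\ref{ex-interval-new}(b)) we have $\bdy K=\{1\}$ and $\Cp(\bdy K\setm\{1\})=0$, but $\Cp(K\setm\{1\})>0$, although $K$ is removable.

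The missing idea is exactly where $\dim X\ge2$ enters: $X':=X\setm\{x_0\}$ is connected. Then $E:=K\setm\{x_0\}$ is a relatively closed proper subset of $X'$ with $\bdy E\cap X'=\bdy K\setm\{x_0\}$. Lemma~\ref{lem-Cp-connected}\ref{it-Cp} applied in $X'$ then gives $\Cp(K\setm\{x_0\})=0$ if and only if $\Cp(\bdy K\setm\{x_0\})=0$. For $\Om$ no dimension argument is needed: Theorem~\ref{thm-cpt-pharm-nec}\ref{r-1} gives $\Cp(X\setm\Om)=0$ directly. If you keep your route, note that you also need $\bdy\Om\subset\bdy G$, which holds since $\dist(K,X\setm\Om)>0$.

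Your verification of~\ref{rs-lim} also rests on a wrong claim. Under~(b), $X\setm G=(X\setm\Om)\cup K$ is $\{x_0\}$ together with a set of zero capacity, so it has empty interior. Hence $\clG=X$ and $G_0=X\setm\{x_0\}$, and no zero-capacity set is removed at all. Your assertion that deleting a closed zero-capacity set preserves sequential annular quasiconvexity because ``curves can be perturbed'' is unjustified. Definition~\ref{def:ann-qcvx} demands a curve for \emph{every} pair of points on the spheres, and its condition~(i) forbids further boundary points near $x_0$. Zero $p$-modulus only says that almost every curve misses the set. Discard that argument. Your alternative is correct and suffices for the ``if'' direction: extend by Theorem~\ref{thm-removability-qharm} across $(X\setm\Om)\cup(K\setm\{x_0\})$, which has zero capacity and is relatively closed in $X\setm\{x_0\}$, and then apply Corollary~\ref{cor-X-one-pt}.
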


When $X=\Rn$ (and $n \ge 2$) equipped with a globally doubling weight supporting
a global \p-Poincar\'e inequality (i.e.\ with a \p-admissible weight
as in Heinonen--Kilpel\"ainen--Martio~\cite{HeKiMa}), 
Corollary~\ref{cor-mfld}
is a special case of Bj\"orn~\cite[Theorem~1.3]{ABRnremove}. 
Here we only have local assumptions.

The following is another direct consequence of 
Theorem~\ref{thm-main-char-intro} (and removability of
sets with zero capacity).

\begin{cor} \label{cor-remove-K-from-X-hyp}
Assume that   $X$ is a proper connected metric space
equipped with a locally doubling measure $\mu$ that
supports a local \p-Poincar\'e inequality, where $1<p<\infty$.

Let $\Om \subset X$ be open, 
 $K \subsetneq \Om$ be compact and $Q\ge1$.
Also assume that at least one of the following conditions holds\/\textup:
\begin{enumerate}
\renewcommand{\theenumi}{\textup{(\arabic{enumi})}}%
\item 
$X$ is \p-hyperbolic,
\item
$\Cp(X\setm \Om)>0$,
\item
$\Cp(\{x\})=0$ for every $x \in \bdy K$.
\end{enumerate}
Then $K$ is removable for bounded $Q$-quasiharmonic functions in $\Om \setm K$
if and only if $\Cp(K)=0$.
\end{cor}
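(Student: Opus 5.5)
The plan is to split into the cases $\Cp(K)=0$ and $\Cp(K)>0$. If $\Cp(K)=0$, then $K$ is removable by the general removability of zero-capacity sets (Theorem~\ref{thm-removability-qharm}). This gives the ``if'' direction. For the ``only if'' direction, assume $\Cp(K)>0$ and that $K$ is removable. We aim for a contradiction with each of the hypotheses (1)--(3) in turn, using the implication \ref{f-rem}$\imp$\ref{f-char} of Theorem~\ref{thm-main-char-intro} with $G=\Om\setm K$. That implication yields three facts: $X$ is \p-parabolic or bounded; there is $x_0\in\bdy K$ with $\Cp(\bdy G\setm\{x_0\})=0$; and the limit \eqref{eq-lim-ex} exists.

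Case (1). If $X$ is \p-hyperbolic, it is neither \p-parabolic nor bounded. For the latter, I will use that bounded (compact, since $X$ is proper) spaces are \p-parabolic, or at least not \p-hyperbolic; this should follow directly from the definitions and is implicit in the phrasing of \ref{rs-par}. This contradicts \ref{rs-par}.

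Case (2). I want to show $\Cp(X\setm\Om)\le\Cp(\bdy\Om)$, so that $\Cp(X\setm\Om)>0$ forces $\Cp(\bdy G\setm\{x_0\})>0$, since $\bdy\Om\subset\bdy G$ and $x_0\in K\subset\Om$ is not in $\bdy\Om$. This is the step needing care.

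Case (3). Condition \ref{rs1+2} gives $\Cp(\bdy G\setm\{x_0\})=0$, and by hypothesis $\Cp(\{x_0\})=0$, so by subadditivity $\Cp(\bdy G)=0$. Since $\bdy K\subset\bdy G$, it follows that $\Cp(\bdy K)=0$. It remains to deduce $\Cp(K)=0$, contradicting $\Cp(K)>0$.

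The main obstacle is passing from the capacity of a boundary to the capacity of the whole set, which is needed in both Case (2) and Case (3). In Case (3), the interior of $K$ has positive measure, hence positive capacity, unless it is empty, in which case $K=\bdy K$. So I would argue as follows. If $\Cp(\bdy G)=0$, then $G$ is a nonempty open set whose boundary has zero capacity. Since $X$ is connected and sets of zero capacity do not separate open sets under the local Poincar\'e inequality (standard, via quasicontinuity), $\Cp(\bdy G)=0$ forces $X\setm G$ to have zero capacity or empty interior. Here $X\setm G\supset K$.

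A cleaner route for Case (3) avoids this. Since $\Cp(\{x\})=0$ for every $x\in\bdy K$, no point of $\bdy K$ can carry $\bdy G$'s capacity, so all of $\bdy G$ has zero capacity. Then $G$ and $\operatorname{int}(X\setm G)$ would be separated by a capacity-zero set. Because $X$ is connected and $G\ne\emptyset$, this forces $\operatorname{int}(X\setm G)=\emptyset$, so $K\subset\bdy G$ and $\Cp(K)=0$.

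The same nonseparation fact handles Case (2). We have $X\setm\Om\subset\bdy\Om\cup\operatorname{int}(X\setm\Om)$, and the interior is empty (by nonseparation) unless $\Cp(\bdy\Om)>0$. In either situation, $\Cp(X\setm\Om)>0$ forces $\Cp(\bdy G\setm\{x_0\})>0$, contradicting \ref{rs1+2}. I would cite or briefly prove this nonseparation lemma: a closed set of zero capacity has empty interior and cannot disconnect a connected open set, by local Poincar\'e arguments.
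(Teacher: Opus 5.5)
Your proof is correct and follows the paper's route. Assuming $\Cp(K)>0$, you contradict \ref{rs-par} or \ref{rs1+2} of Theorem~\ref{thm-main-char-intro}, and your nonseparation arguments in Cases (2) and (3) rederive Lemma~\ref{lem-Cp-connected}\ref{it-Cp} in $X$ (for $E=X\setm\Om$ and for $E=X\setm G\supset K$) from Lemma~\ref{lem-Cp-connected}\ref{it-conn}, whereas the paper cites \ref{it-Cp} directly. One small correction: the inequality $\Cp(X\setm\Om)\le\Cp(\bdy\Om)$ you first announce is false in general, but you only actually prove and use the implication $\Cp(X\setm\Om)>0\Rightarrow\Cp(\bdy\Om)>0$, which is fine.
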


Examples~\ref{ex-hyp-R} and~\ref{ex-two-R} show that
Corollary~\ref{cor-remove-K-from-X-hyp}
fails if $K\subsetneq \Om$ is merely assumed to be closed (in $X$).

In \eqref{eq-seq=>lim=>loc-conn} and Example~\ref{ex-circle-new}
we saw that local connectedness is an important property in
connection with removability.
The following result shows that it is in fact
a necessary requirement, even for 
removability of relatively closed subsets.

\begin{prop} \label{prop-not-locconn}
Assume that   $X$ is a proper connected metric space
equipped with a locally doubling measure $\mu$ that
supports a local \p-Poincar\'e inequality, where $1<p<\infty$.

Let $E$ be a relatively closed subset of $\Om$.
Assume that there is $x_0 \in \Om \cap \bdy E$ 
such that $G:=\Om\setm E$ is not locally connected at $x_0$. 
Then there is a bounded \p-harmonic function in $G$
which does not have any continuous extension to $\Om$.

In particular, $E$ is not removable for bounded 
$Q$-quasiharmonic functions in $G$, when $Q \ge 1$.
\end{prop}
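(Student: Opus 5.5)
Since $G$ is not locally connected at $x_0$, there is a radius $r_0>0$ with the following property: for every $0<r<r_0$, the point $x_0$ is not an interior point of the component of $G\cap B(x_0,r_0)$ containing $x_0$ (relative to $G\cup\{x_0\}$). The plan is to extract from this two disjoint open sets $U_1,U_2\subset G$, each accumulating at $x_0$, and a bounded \p-harmonic function on $G$ that is close to $1$ on $U_1$ and close to $0$ on $U_2$ near $x_0$. Such a function cannot have a limit at $x_0$, so it has no continuous extension to $\Om$.

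Concretely, I would fix a small ball $B=B(x_0,r_0)$ with $\itoverline{B}\subset\Om$ and $r_0$ small enough that the local Poincar\'e inequality and local doubling hold uniformly on $50\la B$. Failure of local connectedness yields infinitely many distinct components $V_j$ of $G\cap B$ with $x_0\in\itoverline{V_j}$. (If $x_0\in\itoverline{G}$ but only finitely many components accumulated at $x_0$, then their union together with $x_0$ would be a neighbourhood, contradicting the choice of $r_0$. A standard argument with chains of components, using that $B$ is open and $X$ is proper, gives this.) Split the indices into two infinite families $J_1,J_2$ and set $U_i=\bigcup_{j\in J_i}V_j$. Define $f=\chi_{U_1}$ on $G\cap B$ and $f=0$ on $G\setm B$, and let $u$ be a \p-harmonic function in $G$ with boundary data $f$ in the following sense.

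The cleanest construction is to let $u_j$ be the \p-harmonic function in the component $V_j$ with boundary values $1$ on $\bdy V_j\cap B$ (for $j\in J_1$) and $0$ on $\bdy B$, via the Perron or Dirichlet solution in $V_j$. Put $u=u_j$ on $V_j$ for $j\in J_1$, $u=0$ on the $V_j$ with $j\in J_2$, and $u=0$ on $G\setm B$. The trouble is that this is not \p-harmonic across $\bdy B\cap G$. I would therefore instead solve a single obstacle/Dirichlet problem on $G\cap 2B$ and use the fact that components of $G\cap B$ are separated inside $B$ by $E$.

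A simpler route avoids the gluing altogether. Take $u$ to be the \p-harmonic solution in $G\cap B$ (componentwise) with continuous boundary data $\phi$, where $\phi=1$ on $\bdy V_j\cap B$ for $j\in J_1$ and $\phi=0$ elsewhere. Since $E\cap B$ separates the components, this is legitimate: $\bdy V_j\cap B\subset E$, so the data can be prescribed independently on each component. Here $u\equiv1$ on $V_j$, $j\in J_1$, except near $\bdy B$. But then $u$ is defined only on $G\cap B$, while the statement asks for a function on $G$. To handle this I would use the function $u=\chi_{U_1}$ itself on $G\cap B'$ for a slightly smaller ball $B'$, and require constancy on each component. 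A function constant on each component of an open set is \p-harmonic there, so the remaining problem is only the extension to the rest of $G$.

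The main obstacle is this global extension: producing a bounded \p-harmonic $u$ on all of $G$ whose oscillation near $x_0$ is bounded away from zero. My plan is to solve the Dirichlet problem on $G\cap B$ with data $\chi_{U_1}$ on $\bdy B$. Along $\bdy G\cap B\subset E$ one can take any data, say $1$ on $\bdy V_j$ for $j\in J_1$ and $0$ otherwise, which is consistent since the $V_j$ are separated. This requires only \p-harmonicity in $G\cap B$. I would then shrink $\Om$ to $B$, using the paper's later localization: non-extendability on a smaller open set implies non-removability, because a continuous extension to $\Om$ would restrict to $B$. To get a function on all of $G$ I would construct $u$ by Perron solutions in $G$ with data $1$ on $E\cap\bdy U_1\cap\frac12B$ and $0$ elsewhere on $\bdy G$ (plus $\infty$ if $G$ is unbounded). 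The key estimate, via a barrier or Wiener-type argument, is $\liminf u>1/2$ along $U_1$ and $\limsup u<1/2$ along $U_2$ at $x_0$. This is the step where I expect the difficulty, since regularity at the relevant boundary points of $V_j$ is needed. I would first try comparison with the componentwise constant function on $V_j\cap\frac14B$. Finally, every quasiharmonic function is continuous, so any extension would force a limit at $x_0$; this gives the ``in particular'' claim.
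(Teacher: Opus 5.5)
\textbf{Main gap: the function on $G$.} Your final construction is a Perron solution in $G$ with data $1$ on $E\cap\bdy U_1\cap\tfrac12B$ and $0$ elsewhere on $\bdy G$. It cannot work, and the key estimate you postpone is false for it in general.

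Data prescribed on $\bdy G$ gives each boundary point a single value. It therefore cannot tell whether the point is approached from $U_1$ or from $U_2$, whereas failure of local connectedness is precisely a two-sided phenomenon. Two examples show this:
\begin{enumerate}
\item In Example~\ref{ex-circle-new} with $E=\{-1\}$ we have $\bdy G=\{x_0\}$, so every Perron solution in $G$ is constant. Yet $\arg z$ shows the proposition holds there.
\item If $E\subset\R^2$ is a segment and $x_0$ is an interior point of it, your data equals $1$ on the whole segment near $x_0$. Since all points of the segment are regular, $u\to1$ at $x_0$ from both sides.
\end{enumerate}

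The missing idea is to prescribe the data \emph{inside} $G$. The paper takes $\phi=1-d(\cdot,x_0)/r$ on $V_1$ and $\phi=0$ on $G\setm V_1$; this is continuous on $G$ because $V_0$ and $V_1$ are disjoint open sets. It then uses the Sobolev solution $H_G\phi$ of Definition~\ref{def-harm-ext}, where the condition $\phi-H_G\phi\in\Dp_0(G)$ encodes side-dependent boundary values.

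The remaining analytic steps are also absent from your plan:
\begin{enumerate}
\item $\Cp(B(x_0,\rho)\cap\bdy V_0)>0$ for every $\rho>0$, from Lemma~\ref{lem-Cp-connected} applied in the component of $B(x_0,\rho)$ containing $x_0$. This also gives existence of $H_G\phi$.
\item The localization $H_G\phi=H_{V_j}f$ in $V_j$ with $f=\eta\phi+(1-\eta)H_G\phi$. This is valid because $\eta(\phi-H_G\phi)\in\Np_0(V_j)$, as no curves join $V_0$ and $V_1$.
\item The Kellogg property (Theorem~\ref{thm-Kellogg}) applied to $V_0$ rather than $G$. It produces regular points $y_k\in\bdy V_0$ with $y_k\to x_0$. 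Together with $0\le f\le 2d(\cdot,x_0)/r$ on $V_0$ and comparison with $P_{V_0}d(\cdot,x_0)$, this gives $\liminf_{V_0\ni y\to x_0}H_G\phi(y)=0$, and symmetrically $\limsup_{V_1\ni y\to x_0}H_G\phi(y)=1$.
\end{enumerate}
That is all one needs. Uniform bounds such as $\liminf u>\tfrac12$ along all of $U_1$ are neither required nor delivered by such regular-point arguments.

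\textbf{Two further flawed steps.} First, failure of local connectedness does not give infinitely many components of $G\cap B$ with $x_0$ in their closures. In Example~\ref{ex-circle-new} there are exactly two. It can also happen that exactly one component, or none, has $x_0$ in its closure while infinitely many others accumulate at $x_0$. Your parenthetical argument breaks down because a union of two or more components is not connected, so no contradiction arises.

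What is true, and suffices, is a splitting $G\cap B=V_0\cup V_1$ into disjoint open sets with $x_0\in\bdy V_0\cap\bdy V_1$. The paper obtains it by asking whether some component $V_0$ of $B\cap G$ has $x_0\in\bdy V_0$.
\begin{enumerate}
\item If so, set $V_1:=(B\cap G)\setm V_0$. If $x_0\notin\bdy V_1$, let $W$ be the component of $B(x_0,\dist(x_0,V_1))$ containing $x_0$; then $W\cup V_0$ would be a neighbourhood excluded by the choice of $r$.
\item If not, take alternate members of a sequence of components $V'_{j_k}$ with $\dist(x_0,V'_{j_k})\searrow0$.
\end{enumerate}

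Second, ``shrinking $\Om$ to $B$'' is not a valid reduction. A \p-harmonic function on $G\cap B$ is not a function on $G$. Non-removability also does not pass from $B$ to $\Om$; compare Example~\ref{ex-interval-new}(b) with Theorem~\ref{thm-cpt-pharm-nec}\ref{r-1}. The function must be built on all of $G$ from the outset, which is exactly what $H_G\phi$ does.
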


Along the way we obtain the following Liouville theorem
for nonnegative entire quasiharmonic functions
in \p-parabolic spaces.
In fact, it can be proved even for
quasisuperharmonic functions.
For bounded $X$, it follows directly from the strong minimum principle.

\begin{thm} \label{thm-Liouville-para}
\textup{(The Liouville theorem in \p-parabolic spaces)}
Assume that   $X$ is a proper connected metric space
equipped with a locally doubling measure $\mu$ that
supports a local \p-Poincar\'e inequality, where $1<p<\infty$.

If $X$ is \p-parabolic or bounded, then 
every nonnegative quasisuperharmonic function in $X$
is constant.
\end{thm}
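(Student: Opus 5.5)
The bounded case is handled by the strong minimum principle, so I assume $X$ is unbounded and \p-parabolic. Let $u\ge0$ be $Q$-quasisuperharmonic in $X$. After replacing $u$ by $u-\inf_X u$, which is still nonnegative and $Q$-quasisuperharmonic, I may assume $\inf_X u=0$. I also replace $u$ by $\min\{u,M\}$ for a fixed $M>0$. This truncation is again quasisuperharmonic, and if every such truncation is constant then so is $u$. So I may take $0\le u\le M$.

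The strategy is to compare $u$ with capacitary potentials. Fix a ball $B=B(x_1,r)$ and set $m=\inf_B u$. By lower semicontinuity and the weak Harnack inequality for quasisuperharmonic functions (available locally under the local doubling and local Poincar\'e assumptions), $m>0$ unless $u\equiv0$, by the strong minimum principle. Parabolicity says $\cp(\clB,B(x_1,R))\to0$ as $R\to\infty$. Equivalently, there are functions $v_R\in\Np_0(B(x_1,R))$ with $v_R=1$ on $\clB$, $0\le v_R\le1$, and $\int g_{v_R}^p\,d\mu\to0$. I will take $v_R$ to be the capacitary potentials, which are \p-harmonic in $B(x_1,R)\setm\clB$.

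The key claim is $u\ge m v_R$ in $B(x_1,R)$. In $B(x_1,R)\setm\clB$ the function $mv_R$ is \p-harmonic. It has boundary values at most $m\le u$ on $\bdy B$ and $0\le u$ on $\bdy B(x_1,R)$. The difficulty is that $u$ is only $Q$-quasisuperharmonic, so the usual comparison principle between a quasisuperminimizer and a \p-harmonic function fails when $Q>1$. I therefore would not compare pointwise. Instead I would use an energy argument: test the quasisuperminimizing property of $u$ with the competitor $\max\{u,mv_R\}$, or rather $u+\phi$ with $\phi=(mv_R-u)_+$, which has support in $B(x_1,R)\setm B$. This yields an estimate of the form
\[
\int_{\{u<mv_R\}} g_u^p\,d\mu \le Q\int_{\{u<mv_R\}} m^p g_{v_R}^p\,d\mu \le Q m^p\cp(\clB,B(x_1,R)).
\]
Hence the energy of $w_R:=\min\{u,mv_R\}$ in $B(x_1,R)$ is at most $(1+Q)m^p\cp(\clB,B(x_1,R))\to0$.

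I then let $R\to\infty$. The functions $w_R$ are bounded by $M$, equal $\min\{u,m\}=m$ on $B$, and have energy tending to zero. By the local Poincar\'e inequality on a fixed ball $B'\supset B$, $w_R$ converges in $L^p(B')$ to a constant, which must be $m$ because $w_R=m$ on $B$. Since $w_R\le u$, this gives $u\ge m$ a.e. on every ball, and by lower semicontinuity and the quasisuperharmonic (essential liminf) regularity, everywhere. Thus $\inf_X u=\inf_B u$ for every ball $B$, so $0=\inf_Xu=m>0$, a contradiction unless $u\equiv0$ after the normalization. In other words the original $u$ equals its infimum, i.e.\ $u$ is constant.

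The main obstacle is the energy estimate in the previous step, since quasisuperharmonic functions need not be quasisuperminimizers with finite energy. I would first apply it to the bounded truncations $\min\{u,M\}$, which lie in $\Nploc$ and are $Q$-quasisuperminimizers. Some care is also needed with the test function, which should be $\min\{mv_R-u,0\}$-type and supported away from $\clB$, to get the factor $Q$ correctly.
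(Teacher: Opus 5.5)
Your proof is correct, and its core coincides with the paper's. In Lemma~\ref{lem-quasi-Liouville-Om} the paper tests the quasisuperminimizing inequality for $v=\min\{u+\de,1\}$ with $\phi=(\eta-v)_+$, where $\eta$ is a parabolic cutoff, just as you test with $(mv_R-u)_+$. It then uses the small energy, the Poincar\'e inequality and $\essliminf$-regularity to conclude that $u$ stays above the level everywhere.

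The differences lie in the localization and in the final step:
\begin{itemize}
\item The paper works at a point $x_0$, normalizing $u(x_0)=\liminf_{y\to x_0}u(y)=1$; the $\de$ makes $v\equiv1$ near $x_0$. You use $m=\inf_B u$ on a ball, which serves the same purpose.
\item You close via the strong minimum principle to get $m>0$. That principle rests on the weak Harnack inequality, which is exactly the tool the paper's proof is designed to avoid.
\end{itemize}

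The strong minimum principle step is unnecessary. Since the truncated $u$ is lower semicontinuous, $\inf_{B(x_1,r)}u\to u(x_1)$ as $r\to0$. So your conclusion $u\ge\inf_B u$ already gives $u\ge u(x_1)$ on $X$ for every $x_1$, i.e.\ $u$ is constant. This is how the paper finishes, taking $u(x_j)\to\sup_X u$. It also makes the normalization $\inf_X u=0$ superfluous. Your energy argument covers bounded $X$ as well, since then $1\in\Lipc(X)$ and one may take $v_R\equiv1$.

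The paper's point-based formulation has a further advantage: it applies to any $\Om$ with $\bdy\Om=\{x_0\}$, and the same lemma is reused for Theorems~\ref{thm-main-char-intro} and~\ref{thm-LL-F}.

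Two small slips:
\begin{itemize}
\item Your ``key claim'' $u\ge mv_R$ is never proved, and it is not needed.
\item In your last paragraph the test function must be the nonnegative $(mv_R-u)_+$ from your main text, not a $\min\{mv_R-u,0\}$-type function, since quasisuperminimizers may only be tested with $\phi\ge0$.
\end{itemize}
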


Under global assumptions, the Liouville theorem
is well known and holds
also in \p-hyperbolic spaces, see 
Kinnunen--Shan\-mu\-ga\-lin\-gam~\cite{KiSh01} and
Kinnunen--Martio~\cite{KiMa03}
(and Theorem~\ref{thm-Liouville-global}).
Their proofs rely on the weak Harnack inequality.
Under \emph{local} assumptions, the Liouville theorem
does not follow from the weak Harnack inequality in the same way as it 
does under global assumptions.
In fact,  in this case, the Liouville theorem can fail 
for bounded \p-harmonic functions in \p-hyperbolic spaces,  
see Bj\"orn--Bj\"orn~\cite[Example~10.3]{BBsemilocal}.
Moreover, the Liouville theorem always fails for \emph{superharmonic} functions
in \p-hyperbolic spaces, because there 
are positive global Green functions 
(and they are superharmonic) in \p-hyperbolic spaces, 
while they never exist in \p-parabolic spaces, cf.\ Remark~\ref{rmk-hypend}.

For \p-harmonic functions, Theorem~\ref{thm-Liouville-para}
was obtained in 
Bj\"orn--Bj\"orn--Shan\-mu\-ga\-lin\-gam~\cite[Proposition~6.7]{BBShypend},
using the strong maximum principle (and thus relying 
on the weak Harnack inequality).
In contrast, our proof of Theorem~\ref{thm-Liouville-para}
is completely elementary and only relies on the
definition of \p-parabolicity.

\begin{remark} \label{rmk-hypend}
In the terminology of \cite{BBShypend},
Theorem~\ref{thm-Liouville-para} implies that 
$O^p_{\para}  \subset    O^p_{QP}$,
which improves upon Theorem~1.2 in~\cite{BBShypend}.
In the mean time it has also been
shown that $O^p_{\para}= O^p_{G}$, i.e.\ that $X$ is
\p-parabolic if and only if it does not carry a global positive Green function, see
\cite[Theorem~1.1]{BBglobal}.
For Riemannian manifolds the equality $O^p_{\para}= O^p_{G}$ 
is due to Holopainen~\cite[Theorem~3.27]{Ho}.
\end{remark}

In Section~\ref{sect-Liouville-G} we  use Theorem~\ref{thm-Liouville-para}
to give the following complete characterization of the Liouville theorem
for bounded  $Q$-quasiharmonic
functions in open sets $G$ with (possibly unbounded)
complement $F:=X \setm G$  
and $\Cp(F)>0$.
The case when $\Cp(F)=0$ is covered by Proposition~\ref{prop-L-CpF=0-new}.
Note that
if  the Liouville theorem holds in $G$, then $G$ must be connected.

\begin{thm} \label{thm-LL-F}
Assume that   $X$ is a proper connected metric space
equipped with a locally doubling measure $\mu$ that
supports a local \p-Poincar\'e inequality, where $1<p<\infty$.

Let $G \subset X$ be  a nonempty open set such that
$\Cp(X \setm G)>0$, and let $Q \ge 1$.
Then the Liouville theorem holds for bounded
$Q$-quasiharmonic functions in $G$ 
if and only if all of the following conditions hold\textup:
\begin{enumerate}
\renewcommand{\theenumi}{\textup{(\roman{enumi})}}%
\item  \label{Lio-par}
$G$ is bounded or a \p-parabolic set\/ 
\textup(see~\eqref{eq-par-set-int}\textup),
\item \label{Lio-cap}
there is $x_0 \in \bdy G$ such that $\Cp(\bdy G \setm \{x_0\})=0$,
\item \label{Lio-lim}
the limit 
\begin{equation*} 
\lim_{G \ni y \to x_0} u(y) \quad \text{exists}
\end{equation*}
for every bounded $Q$-quasiharmonic function $u$ in~$G$.
\end{enumerate}
In particular, with $Q=1$, the statement holds for \p-harmonic functions.
\end{thm}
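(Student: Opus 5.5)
The plan is to prove the two directions separately. Write $F:=X\setm G$, so $\Cp(F)>0$. Throughout I will use Theorem~\ref{thm-Liouville-para}, the removability of sets of zero capacity (Theorem~\ref{thm-removability-qharm}), and the definition~\eqref{eq-par-set-int} of \p-parabolic sets.

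\emph{Sufficiency.} Assume \ref{Lio-par}--\ref{Lio-lim} and let $u$ be a bounded $Q$-quasiharmonic function in $G$.
\begin{enumerate}
\item First I show that $G_0:=\clG\setm\{x_0\}$ is open. This follows from \ref{Lio-cap} and Lemma~\ref{lem-G_0-open}. Since $G_0\setm G\subset\bdy G\setm\{x_0\}$ has zero capacity, Theorem~\ref{thm-removability-qharm} extends $u$ to a bounded $Q$-quasiharmonic function in $G_0$.
\item By \ref{Lio-lim}, set $u(x_0):=\lim_{G\ni y\to x_0}u(y)=:a$. The limit through $G_0$ also equals $a$, by continuity and density of $G$ in $G_0$, so $u$ is continuous on $\clG$.
\item Consider $v=u-a$ and, say, $w=(u-a)_+$. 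I want to show $w\equiv0$. Using \p-parabolicity of $G$ (or boundedness), take test functions $\eta_j$ with $\eta_j=1$ on large sets and $\int_G g_{\eta_j}^p\,d\mu\to0$. Then plug $\phi=\eta_j^p\min\{(u-a-\eps)_+,\,\cdot\}$, which vanishes near $x_0$ and is compactly supported in $G_0$, into the quasiminimizing property. This is a Caccioppoli-type estimate.
\item That estimate gives $\int g_{(u-a-\eps)_+}^p\eta_j^p\,d\mu\le C\|u\|_\infty^p\int g_{\eta_j}^p\,d\mu\to0$. Hence $(u-a-\eps)_+$ is constant on the connected set $G_0$, and since it vanishes near $x_0$ it vanishes everywhere. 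The symmetric argument gives $u\equiv a$.
\item Alternatively, I can follow the approach of the proof of Theorem~\ref{thm-Liouville-para}, applied to $u-\inf u$ and $\sup u-u$ as nonnegative quasisuperharmonic functions. That would need $G_0$ connected. Connectedness of $G_0$ (hence of $G$, since capacity-zero sets do not separate) follows because $x_0\in\bdy G$ and $\clG$ is connected up to $x_0$; I will check this using the continuity at $x_0$.
\end{enumerate}

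\emph{Necessity.} Assume the Liouville theorem holds in $G$; in particular $G$ is connected.
\begin{enumerate}
\item Condition \ref{Lio-lim} is immediate, since every bounded quasiharmonic function in $G$ is constant.
\item For \ref{Lio-cap}, suppose instead that $\bdy G$ contains two disjoint compact sets $F_1,F_2$ with $\Cp(F_i)>0$. Solve an obstacle or Perron-type Dirichlet problem in $G$ (possibly exhausting by bounded sets) with boundary data $1$ near $F_1$ and $0$ near $F_2$.
\item This produces a bounded \p-harmonic function, which is also $Q$-quasiharmonic. Positive capacity, combined with Kellogg-type boundary regularity of q.e.\ boundary points, makes it tend to $1$ at some points of $F_1$ and to $0$ at some points of $F_2$. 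So it is nonconstant, a contradiction.
\item If $\bdy G$ is unbounded but all its capacity sits at one point, that case is fine, so assume two points $x_1\ne x_2$ of positive "local capacity" and localize around them.
\item For \ref{Lio-par}, if $G$ is unbounded and \p-hyperbolic, then the \p-harmonic measure of "infinity" relative to $G$ is a bounded \p-harmonic function, namely the limit of the capacitary potentials of $F\cap B_R$ relative to $G$ as $R\to\infty$. It is nonconstant, because it vanishes q.e.\ on $F$ (using $\Cp(F)>0$) but is not identically $0$, by hyperbolicity.
\end{enumerate}

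The main obstacle will be the necessity of \ref{Lio-par} under only local assumptions, namely showing that this limiting function is nontrivial. I would first try to prove this by comparing energies, using the definition~\eqref{eq-par-set-int} directly.
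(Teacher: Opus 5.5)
Your sufficiency argument is essentially the paper's: your step~5 is precisely Lemma~\ref{lem-quasi-Liouville-Om} applied to the extension of $u$ to the open set $G_0$, whose boundary is $\{x_0\}$. Your treatment of \ref{Lio-cap} and \ref{Lio-lim} also essentially matches. The Kellogg property gives two distinct regular points $x_0,x_1\in\bdy G$, and the Perron solution of Lipschitz data equal to $0$ at $x_0$ and $1$ at $x_1$ is a nonconstant bounded \p-harmonic function.

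\textbf{The gap: necessity of \ref{Lio-par}.} You claim that the limiting function $\omega$ (bounded, \p-harmonic in $G$, vanishing q.e.\ on $F$) is not identically $0$ ``by hyperbolicity''. This is false in general. Take $X=\R^n$, $n\ge3$, $p=2$ and $G=\{x_1>0\}$. Reflecting any admissible $\eta$ evenly across $\{x_1=0\}$ gives $\int_G|\nabla\eta|^2\,dx\ge\frac12\capp_2(\overline{B(0,1)},\R^n)>0$, so $G$ is not a $2$-parabolic set. Yet odd reflection and the classical Liouville theorem show that every bounded harmonic function in $G$ with zero boundary values vanishes, so $\omega\equiv0$. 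The reason is that \eqref{eq-par-set-int} integrates only over $G$ and puts no constraint on $\eta$ in $F$; it is a Neumann-type condition on $\bdy G$. Your $\omega$, by contrast, carries Dirichlet data on all of $F$. No energy comparison based on \eqref{eq-par-set-int} alone can bridge this, since the example satisfies every hypothesis your step uses.

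\textbf{The missing idea.} You should establish \ref{Lio-cap} and the connectedness of $G$ (Lemma~\ref{lem-Liouville=>connected}) \emph{first}. Then $\bdy G$ is, up to a set of zero capacity, the single point $x_0$, and $\Cp(\{x_0\})>0$. The paper then treats $Y=\clG$ with $\mu|_Y$ as a metric measure space in its own right.
\begin{enumerate}
\item Lemma~\ref{lem-Y-PI} shows that $\mu|_Y$ is locally doubling and supports a local \p-Poincar\'e inequality on $Y$, and that $\CpY(\{x_0\})>0$. It uses local quasiconvexity and $\bdy G_0=\{x_0\}$: quasiconvex curves leaving $Y$ must pass through $x_0$.
\item Since $G$ is an unbounded \p-hyperbolic set, $Y$ is \p-hyperbolic.
\item The Green function on $Y$ with pole at $x_0$ from \cite{BBglobal} is bounded (since $\CpY(\{x_0\})>0$). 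It attains its maximum at $x_0$ and has infimum $0$, so it is a nonconstant bounded \p-harmonic function in $G$.
\end{enumerate}
After this reduction your $\omega$ is essentially $1$ minus a capacitary potential of $\{x_0\}$ in $Y$. Showing $\omega\not\equiv0$ would need exactly this analysis on $Y$ near $x_0$, which your plan does not contain.

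\textbf{Two smaller points in the sufficiency part.}
\begin{enumerate}
\item Connectedness of $G_0$ does not follow from $x_0\in\bdy G$ alone. The bow-tie of Example~\ref{ex-bow-ties}(a) with $G=X\setm\{0\}$ has $\bdy G=\{0\}$ but two components. Connectedness does follow from \ref{Lio-lim} applied to characteristic functions of components. Alternatively it can be bypassed by extending by zero to the connected space $X$, as in Lemma~\ref{lem-quasi-Liouville-Om}.
\item For $Q>1$, a Caccioppoli estimate with an arbitrary cutoff $\eta_j$ needs a hole-filling iteration over the level sets of $\eta_j$. The test function $(\eta-v)_+$ in Lemma~\ref{lem-quasi-Liouville-Om} avoids this entirely.
\end{enumerate}
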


The Liouville theorem trivially implies removability
of $F:=X \setm G$, but 
Examples~\ref{ex-hyp-R} and~\ref{ex-two-R} show
that the converse implication does not hold if $F$ is not compact,
in contrast to Theorem~\ref{thm-main-char-intro}.

Removable singularities for bounded harmonic functions is a classical topic
in potential theory.
Let us just mention that Bouligand~\cite[p.~104]{Bouligand26} showed that 
compact sets 
with zero capacity are removable for bounded
harmonic functions in open subsets of $\Rn$.
Removability of sets with zero capacity 
for bounded \p-harmonic functions in open subsets of $\Rn$
is due to Serrin~\cite[Theorem~10]{serrin64} (compact sets in unweighted $\Rn$)
and
Hei\-no\-nen--Kil\-pe\-l\"ai\-nen--Martio~\cite[Theorem~7.36]{HeKiMa} (relatively closed sets
in weighted $\Rn$ with a \p-admissible weight).
On metric spaces this  was obtained in
Bj\"orn--Bj\"orn--Shan\-mu\-ga\-lin\-gam~\cite[Proposition~8.3]{BBS2}
and (a bit more generally and also for quasiharmonic functions) 
Bj\"orn~\cite[Theorem~6.2]{ABremove}.
Removability of compact sets with
zero capacity for bounded quasiharmonic 
functions in open subsets of unweighted $\Rn$ was shown
by Tolksdorf~\cite[Theorem~1.5]{tolksdorf}.
Kilpel\"ainen~\cite[Theorem~1.8]{Kilp89} showed 
that the Liouville theorem holds for bounded \p-harmonic functions
in a connected open set $G$ in unweighted $\Rn$, with $1<p\le n$, if and only if
$\Cp(\Rn \setm G)=0$.
We saw in Example~\ref{ex-interval-new}(b)
that compact sets with positive capacity can be removable.
That there are such exceptional cases also on unweighted (and weighted) $\Rn$, 
$n \ge 2$, was shown in Bj\"orn~\cite[Theorems~1.2--1.3] {ABRnremove}.

The outline of the paper is as follows:
In Sections~\ref{sect-ug} and~\ref{sect-harm},
we introduce the necessary background  from
first-order analysis on metric spaces.

The Dirichlet problem
is a useful tool for constructing \p-harmonic functions
with various properties.
In this paper we will use two types of solutions
of the Dirichlet problem: Sobolev and Perron solutions.
They are defined in Section~\ref{sect-Perron}.

The main result (Theorem~\ref{thm-cpt-pharm-nec}) in Section~\ref{sect-cpt}
shows that a number of specific properties has to hold if a compact
set of positive capacity should be removable. 

In Section~\ref{sect-Liouville} we discuss a different version
of the Liouville theorem on $X$, and give a proof
of it for quasisuperharmonic functions on \p-parabolic spaces 
(Theorem~\ref{thm-Liouville-para}).
Also Theorem~\ref{thm-main-char-intro} 
and Corollary~\ref{cor-remove-K-from-X-hyp} are proved here.
In Section~\ref{sect-Liouville-G} we turn to
the Liouville theorem in open subsets and prove Theorem~\ref{thm-LL-F}.

Section~\ref{sect-loc-conn} is devoted to 
local connectedness and 
Proposition~\ref{prop-not-locconn},
while Section~\ref{sect-annular} deals with
sequential annular quasiconvexity,
leading to the proofs of Corollaries~\ref{cor-X-one-pt} and~\ref{cor-mfld}.
We end the paper with various examples in Sections~\ref{sect-examples}
and~\ref{sect-Janas-ex}.
In particular,  Example~\ref{ex-two-R}
provides unbounded closed removable sets that separate the space
and Section~\ref{sect-Janas-ex} contains examples of locally connected spaces 
without annular quasiconvexity.

\section{Notation and preliminaries}
\label{sect-ug}

\begin{ass}
We assume throughout the paper
that $X=(X,d,\mu)$ is a metric space equipped
with a metric $d$ and a positive complete  Borel  measure $\mu$
such that $0<\mu(B)<\infty$ for all 
balls $B \subset X$.
We also assume that $1<p< \infty$
and that $\Om\ne\emptyset$ is an open set in $X$.
Additional general
assumptions are added at  the beginning of Section~\ref{sect-harm}.
\end{ass}

In this section we will introduce and give precise definitions
of the necessary metric space concepts used in this paper.
We will be brief, 
for further details
see the monographs Bj\"orn--Bj\"orn~\cite{BBbook} and
Heinonen--Koskela--Shan\-mu\-ga\-lin\-gam--Tyson~\cite{HKST},
where the theory is thoroughly  developed with proofs.

It follows from our general assumptions that $X$ is separable and Lindel\"of.
To avoid pathological situations we assume that $X$ contains
at least two points.

A \emph{curve} is a continuous mapping from an interval,
and a \emph{rectifiable} curve is a curve with finite length.
A rectifiable curve can
be parameterized by its arc length $ds$.
A property holds for \emph{\p-almost every nonconstant rectifiable curve}
if the curve family $\Ga$ for which it fails has zero \p-modulus,
i.e.\ there is $\rho\in L^p(X)$ such that
$\int_\ga \rho\,ds=\infty$ for every $\ga\in\Ga$.
We will only consider curves which are 
compact and rectifiable.
Following Heinonen--Koskela~\cite{HeKo98} and Koskela--MacManus~\cite{KoMc}
we next introduce upper gradients and \p-weak upper gradients.

\begin{deff} \label{deff-ug}
A Borel function $g:X \to [0,\infty]$ is an \emph{upper gradient}
of $f:X \to \eR:=[-\infty,\infty]$
if for every nonconstant rectifiable curve
$\gamma: [0,\ell_{\gamma}] \to X$,
\begin{equation} \label{ug-cond}
        |f(\gamma(0)) - f(\gamma(\ell_{\gamma}))| \le \int_{\gamma} g\,ds,
\end{equation}
where the left-hand side is $\infty$
whenever at least one of the
terms therein is infinite. 

If $g:X \to [0,\infty]$ is  measurable 
and \eqref{ug-cond} holds for \p-almost every 
nonconstant rectifiable curve,
then $g$ is a \emph{\p-weak upper gradient} of~$f$.
\end{deff}

If $g \in \Lploc(X)$ is a \p-weak upper gradient of $f$,
then one can find a sequence $\{g_j\}_{j=1}^\infty$
of upper gradients of $f$ such that
$\lim_{j \to \infty}\|g_j-g\|_{L^p(X)} =0$.
If $f$ has an upper gradient in $\Lploc(X)$, then
it has a \emph{minimal \p-weak upper gradient}
$g_f \in \Lploc(X)$ in the sense that $g_f \le g$ a.e.\ 
for every \p-weak upper gradient $g \in \Lploc(X)$ of $f$.

Following Shan\-mu\-ga\-lin\-gam~\cite{Sh-rev}, 
we next define the Newtonian Sobolev space on the metric space $X$.

\begin{deff} \label{deff-Np}
For measurable $f$, let
\[
        \|f\|_{\Np(X)} = \biggl( \int_X |f|^p \, d\mu
                + \inf_g  \int_X g^p \, d\mu \biggr)^{1/p},
\]
where the infimum is taken over all upper gradients of $f$
(or equivalently over all \p-weak upper gradients of $f$).
The \emph{Newtonian space} on $X$ is
\[
        \Np (X) = \{f: \|f\|_{\Np(X)} <\infty \}.
\]
\end{deff}

The space $\Np(X)/{\sim}$, 
with the equivalence relation
$f \sim h$ if and only if $\|f-h\|_{\Np(X)}=0$,
is a Banach space and a lattice.
The \emph{Dirichlet space} $\Dp(X)$ is the collection of all
measurable functions on $X$ 
that have a \p-weak upper gradient in $L^p(X)$.

We say  that $f \in \Nploc(X)$ if
for every $x \in X$ there is a ball $B_x\ni x$ such that
$f \in \Np(B_x)$. 
If $f,h \in \Nploc(X)$,
then $g_f=g_h$ a.e.\ in $\{x \in X : f(x)=h(x)\}$.
In particular, $g_{\min\{f,c\}}=g_f \chi_{\{f < c\}}$ a.e.\ in $X$
for $c \in \R$.
For a measurable set $E\subset X$, the spaces 
$\Np(E)$, $\Nploc(E)$ and $\Dp(E)$
are defined by
considering $(E,d|_E,\mu|_E)$ as a metric space in its own right.
In this paper, it is convenient to
assume that functions in $\Np$ and $\Dp$
 are defined everywhere (with values in $\eR$),
not just up to an equivalence class in the corresponding function space.
This is e.g.\ essential for the definitions of
upper gradients and \p-weak upper gradients to make sense.

The (Sobolev) \emph{capacity} of an arbitrary set $E\subset X$ is 
\[
\Cp(E) = \inf_u\|u\|_{\Np(X)}^p,
\]
where the infimum is taken over all $u \in \Np(X)$ such that
$u\geq 1$ in $E$.
A property holds \emph{quasieverywhere} (q.e.)\
if the set of points  for which it fails
has capacity zero.
The capacity is the correct gauge
for distinguishing between two Newtonian functions:
If $u \in \Np(X)$, then 
$v \sim u$ if and only if $v=u$ q.e.
Moreover, if $u,v \in \Np(X)$ and $u= v$ a.e., then $u=v$ q.e.

For a ball $B=B(x,r):=\{y : d(x,y)<r\}$ with centre $x$ and radius $r$, we let
$\lambda B = B(x, \lambda r)$. In metric spaces
it can happen that balls with different centres or
radii denote the same set.
We will, however,
make the convention that a ball $B$ comes with a predetermined
centre and radius.
All balls are assumed to be open in this paper.

A measure $\mu$ is \emph{Ahlfors $D$-regular} if there
is a constant $C \ge 1$ such that
\[
     \frac{r^D}{C} \le \mu(B(x,r)) \le Cr^D
\quad \text{for every $x \in X$ and $0 <r< 2 \diam X$}.
\]

\begin{deff} 
The measure $\mu$ is \emph{doubling within  $\Om$}
if there is a constant $C>0$  
such that $\mu(2B)\le C \mu(B)$ for all balls $B \subset \Om$.

Similarly, the
\emph{\p-Poincar\'e inequality holds within $\Om$}, with $1 \le p <\infty$,
if there is a constant $C>0$ and a \emph{dilation constant} $\lambda \ge 1$
such that for all balls $B\subset \Om$,
all integrable functions $u$ in $\la B$ and all 
\p-weak upper gradients $g$ of $u$ in $\la B$,
\begin{equation*} 
        \vint_{B} |u-u_B| \,d\mu
        \le C r_B \biggl( \vint_{\lambda B} g^{p} \,d\mu \biggr)^{1/p},
\end{equation*}
where $u_B:=\vint_B u \,d\mu := \int_B u\, d\mu/\mu(B)$
and $r_B$ is the radius of $B$.

Each of these properties is called \emph{local} if 
for every $x \in X$ there is $r>0$
such that the property holds within $B(x,r)$.
If a property holds within $\Om=X$, then it is called \emph{global}.
\end{deff}

In $\R^n$ equipped with a globally doubling measure $d\mu=w\,dx$, 
the global \p-Poincar\'e inequality 
is equivalent to the \emph{\p-admissibility} of the weight $w$ in the
sense of Hei\-no\-nen--Kil\-pe\-l\"ai\-nen--Martio~\cite{HeKiMa}, see
Corollary~20.9 in~\cite{HeKiMa}
and Proposition~A.17 in~\cite{BBbook}.
Moreover, in this case $g_u=|\nabla u|$ if $u \in \Np(\R^n)$.

The space $X$ is \emph{proper} if  
all closed bounded subsets are compact.
As usual, we write $f_+= \max\{f,0\}$.
In this  paper, a continuous function is always assumed
to be real-valued (as opposed to $\eR$-valued). 

An \emph{$L$-quasiconvex curve} between two points $x$ and $y$ is
a curve with length $\ell \le L d(x,y)$. 
The space $X$ is \emph{$L$-quasiconvex} if every pair of points
can be connected by an $L$-quasiconvex curve.
It is \emph{geodesic} if every pair of points
can be connected by a \emph{geodesic}, i.e.\ a  $1$-quasiconvex curve.
$X$ is \emph{quasiconvex} if it is $L$-quasiconvex for some $L$.

\section{\texorpdfstring{\p}{p}-harmonic and quasiharmonic functions}
\label{sect-harm}

\begin{ass}
In addition to the assumptions from the beginning of Section~\ref{sect-ug},
  we assume from now on that
  $X$ is a proper connected metric space
equipped with a locally doubling measure $\mu$ that
supports a local \p-Poincar\'e inequality, where $1<p<\infty$.
We will also always assume that $1 \le Q < \infty$.
\end{ass}

By Proposition~1.2 and Theorem~1.3 in
Bj\"orn--Bj\"orn~\cite{BBsemilocal},
it follows from our assumptions
that $\mu$ is doubling and supports a \p-Poincar\'e
inequality within every ball.
(These properties were  called semilocal in~\cite{BBsemilocal}.)
It also follows that $X$ is locally quasiconvex, see
\cite[Proposition~4.8]{BBsemilocal},
which in turn implies that $X$ is locally connected 
(see Definition~\ref{deff-lc}),
and hence components of open sets are open.
Moreover, components of open sets are rectifiably connected,
see Lemma~4.38 in~\cite{BBbook}.

In this section we recall the definitions of
\p-harmonic, quasiharmonic and superharmonic functions
and present some of their important properties that
will be needed later.
The dependence on $p$ is implicit in the notation,
but $1<p<\infty$ is always fixed.

Even though we do not assume global doubling and a global \p-Poincar\'e inequality,
all results
in Chapters~6--14 in~\cite{BBbook} (except for the Liouville theorem)
hold under these assumptions,
as well as the results in 
Bj\"orn--Hansevi~\cite{BHansevi1} and
Hansevi~\cite{Hansevi1}, \cite{Hansevi2}, 
see the discussions in~\cite[Section~10]{BBsemilocal}
and Bj\"orn--Bj\"orn--Shan\-mu\-ga\-lin\-gam~\cite[Remark~3.7]{BBShypend}.
Also all earlier quantitative results that we cite about
\p-harmonic functions and quasiminimizers  
(except for the Liouville theorem)
hold under local assumptions.

\begin{deff} \label{def-quasimin}
A function $u \in \Nploc(\Om)$ is a
\emph{$Q$-quasiminimizer} in $\Om$ if 
\[ 
      \int_{\phi \ne 0} g^p_u \, d\mu
           \le Q \int_{\phi \ne 0} g_{u+\phi}^p \, d\mu
           \quad \text{for all } \phi \in \Np_0(\Om),
\] 
where
\[
\Np_0(\Om):=
  \{\phi|_{\Om} : \phi \in \Np(X) \text{ and }
        \phi=0 \text{ on } X \setm \Om\}.
\]
A \emph{$Q$-quasiharmonic function} 
is a continuous $Q$-quasiminimizer,
and a \emph{\p-harmonic function} is a $1$-quasiharmonic function.
A function is \emph{quasiharmonic} if it is
$Q$-quasiharmonic for some $Q$.
\end{deff}

For various characterizations of quasiminimizers
see Bj\"orn~\cite{ABkellogg}.
It was shown in Kinnunen--Shan\-mu\-ga\-lin\-gam~\cite{KiSh01} 
(see also \cite[Section~10]{BBsemilocal}) that 
a $Q$-quasiminimizer 
can be modified on a set of zero (Sobolev) capacity to obtain
a $Q$-quasiharmonic function,
and that this function is locally H\"older continuous.
Moreover, they obtained Harnack inequalities,
the following strong maximum principle and the Liouville theorem
(Theorem~\ref{thm-Liouville-global}).

\begin{thm} \label{thm-max}
\textup{(The strong maximum principle \cite[Corollary~6.4]{KiSh01})}
  A quasiharmonic function that attains
  its maximum in a connected open set $\Om$ must be constant in $\Om$.
\end{thm}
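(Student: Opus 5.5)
The plan is the classical open–closed argument: all the analysis goes into a weak Harnack inequality for nonnegative quasisuperminimizers, which is applied on small balls, and the rest is topology.

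Let $u$ be $Q$-quasiharmonic in the connected open set $\Om$, and suppose it attains its maximum $M=\max_\Om u$ at some point. Set
\[
A=\{x\in\Om : u(x)=M\}.
\]
This set is nonempty by assumption and relatively closed in $\Om$ because $u$ is continuous. Since $\Om$ is connected, it suffices to show that $A$ is open; then $A=\Om$ and $u\equiv M$.

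To prove openness, put $v:=M-u\ge0$. The minimal \p-weak upper gradient is invariant under $u\mapsto -u$ and under adding constants, and $\phi\mapsto-\phi$ preserves $\Np_0(\Om)$. Hence $v$ is again a continuous $Q$-quasiminimizer, and in particular a nonnegative $Q$-quasisuperminimizer. Fix $x\in A$ and choose a ball $B=B(x,r)$ so small that $\mu$ is doubling and the \p-Poincar\'e inequality holds within a fixed large enlargement $CB\subset\Om$; this is possible under our local assumptions (even within every ball, by \cite{BBsemilocal}). On such balls the De Giorgi--Moser machinery of Kinnunen--Shanmugalingam~\cite{KiSh01} gives the weak Harnack inequality
\[
\biggl(\vint_{B} v^q\,d\mu\biggr)^{1/q}\le C \operatorname*{ess\,inf}_{B} v
\]
for some $q>0$, where $B$ is small relative to the ball on which $v$ is a superminimizer. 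Its constants depend only on $p$, $Q$ and the local doubling and Poincar\'e data, as discussed in \cite[Section~10]{BBsemilocal}. Since $v$ is continuous and $v(x)=0$, the essential infimum of $v$ over $B$ is $0$, because $\mu$ charges every neighbourhood of $x$. The left-hand side therefore vanishes, so $v=0$ a.e.\ in $B$. Continuity of $v$ then gives $v\equiv0$ in $B$, i.e.\ $B\subset A$, and $A$ is open.

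The only substantial step is the weak Harnack inequality. Its proof (Caccioppoli estimates for the truncations $(k-v)_+$, a De Giorgi or Moser iteration, and a crossover argument for $\log v$ via the Poincar\'e inequality) is quantitative and uses doubling and Poincar\'e only on balls slightly larger than $B$. I would therefore cite it from \cite{KiSh01} rather than reprove it, noting that localizing to sufficiently small balls inside $\Om$ is exactly what makes the local hypotheses enough.
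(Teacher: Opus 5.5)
Your proof is correct and takes essentially the same approach as the paper. The paper gives no proof of its own: it cites \cite[Corollary~6.4]{KiSh01}, where the strong maximum principle comes from the weak Harnack inequality applied to $M-u$ on small balls together with the open--closed connectedness argument, and it relies on \cite[Section~10]{BBsemilocal}, as you do, to justify this localization under the local assumptions.
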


\begin{deff} \label{deff-superharm-class}
A function $u : \Om \to (-\infty,\infty]$ is 
\emph{superharmonic} in $\Om$ if
\begin{enumerate}
\renewcommand{\theenumi}{\textup{(\roman{enumi})}}%
\item \label{cond-a} $u$ is lower semicontinuous,
\item \label{cond-b} 
 $u$ is not identically $\infty$ in any component of $\Om$,
\item \label{cond-c}
for every nonempty open set $V \Subset \Om$ with $\Cp(X \setm V)>0$
and every function $v\in C(\clV)$ that
is \p-harmonic in $V$ and such
that $v\le u$ on $\bdy V$, we have $v\le u$ in $V$.
\end{enumerate}
\end{deff}

(As usual, by $V \Subset \Om$ we mean that $\clV$
is a compact subset of $\Om$.)

This definition of superharmonicity is the same as the one
usually used in the Euclidean literature, e.g.\ in
Hei\-no\-nen--Kil\-pe\-l\"ai\-nen--Martio~\cite[Section~7]{HeKiMa}.
It is equivalent to other definitions of superharmonicity on metric spaces,
by Theorem~6.1 in Bj\"orn~\cite{ABsuper}
(or \cite[Theorem~14.10]{BBbook}).
It is not difficult to see that a function $u$ is \p-harmonic if and only if 
both $u$ and $-u$ are superharmonic.

The following result shows
that sets with zero capacity are always removable. 
This is the reason why 
we concentrate 
on removable sets with positive capacity.

\begin{thm} \label{thm-removability-qharm}
\textup{(Bj\"orn~\cite[Theorem~6.2]{ABremove}
(or \cite[Theorem~12.2]{BBbook}))}
Let $E \subset \Om$ be a relatively closed subset such that $\Cp(E)=0$.
Then every bounded $Q$-quasiharmonic function $u$ in $\Om \setm E$
has a unique bounded  $Q$-quasiharmonic extension to $\Om$.
\end{thm}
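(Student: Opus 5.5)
The statement is Theorem~\ref{thm-removability-qharm}, cited from \cite{ABremove}. The plan is the classical Serrin-type argument, adapted to Newtonian spaces: first show that $u$ is a quasiminimizer across $E$, then take the continuous representative.

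\emph{Setup and localization.} Let $u$ be bounded and $Q$-quasiharmonic in $\Om\setm E$, with $|u|\le M$. Since $\Cp(E)=0$, we have $\mu(E)=0$, so $u$ is defined a.e.\ in $\Om$. The task is to show that $u$ (suitably defined on $E$) belongs to $\Nploc(\Om)$ and is a $Q$-quasiminimizer in $\Om$. Continuity of a representative then follows from Kinnunen--Shan\-mu\-ga\-lin\-gam~\cite{KiSh01}. Uniqueness is immediate, because $\Om\setm E$ is dense in $\Om$: a set of zero capacity has empty interior, since balls have positive capacity.

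\emph{Sobolev regularity across $E$.} Fix a ball $B\Subset\Om$. Because $\Cp(E)=0$, for each $\eps>0$ there is $\psi_\eps\in\Np(X)$ with $0\le\psi_\eps\le1$, $\psi_\eps=1$ in a neighbourhood of $E\cap\clB$, and $\|\psi_\eps\|_{\Np(X)}<\eps$. Here we use that $\Cp$ is outer (quasicontinuity holds under our assumptions). Take a cutoff $\eta\in\Lipc(\Om)$ with $0\le\eta\le1$ and $\eta=1$ on $B$, and test the quasiminimizing property in $\Om\setm E$ with
\[
\phi=\eta^p(1-\psi_\eps)^p(M-u)\qquad\text{or}\qquad \phi=\eta^p(1-\psi_\eps)^p(u+M),
\]
whichever is convenient. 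These test functions vanish near $E$ and so are admissible; this is the usual Caccioppoli argument. It yields
\[
\int_{B} (1-\psi_\eps)^{p} g_u^p\,d\mu \le C\int (M^p g_\eta^p + M^p g_{\psi_\eps}^p)\,d\mu .
\]
Letting $\eps\to0$, with $\psi_\eps\to0$ in $\Np$ and a.e.\ along a subsequence, gives $g_u\in L^p(B\setm E)$. Upper-gradient control across $E$ also needs care: \p-a.e.\ curve meets $E$ only in a set of zero length, since $\Cp(E)=0$ implies that the curve family hitting $E$ has zero \p-modulus. Hence $u$, extended by, say, $\liminf$ on $E$, has $g_u$ as a \p-weak upper gradient in $B$, and $u\in\Nploc(\Om)$.

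\emph{Quasiminimality across $E$.} Let $\phi\in\Np_0(\Om)$. Since $u$ is bounded we may truncate, so assume $\phi$ is bounded. Write $\phi=(1-\psi_\eps)\phi+\psi_\eps\phi$. The first part is essentially supported away from $E$, so it is an admissible test in $\Om\setm E$. Apply the inequality with $\phi_\eps=(1-\psi_\eps)\phi$ and let $\eps\to0$. Then $g_{u+\phi_\eps}\to g_{u+\phi}$ in $L^p$, and the sets $\{\phi_\eps\ne0\}$ increase to $\{\phi\ne0\}\setm\{\psi_\eps=1\}$, which differs from $\{\phi\ne0\}$ by a set of measure tending to zero. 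I expect this limit passage to be the main obstacle. The clean way to handle it is to use the equivalent formulation of quasiminimizers in~\cite{ABkellogg}, namely $\int_{\spt\phi}g_u^p\le Q\int_{\spt\phi}g_{u+\phi}^p$ for compactly supported $\phi$, or the version with integrals over all of $\Om$ restricted to an open set. That formulation makes the inequality stable under $L^p$ convergence of gradients via dominated convergence. Finally, take the H\"older continuous representative. It agrees with $u$ on $\Om\setm E$, since $u$ is already continuous there and agreement a.e.\ plus continuity gives agreement everywhere, and it is bounded by $M$.
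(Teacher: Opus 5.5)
The paper does not prove this theorem; it quotes it from \cite[Theorem~6.2]{ABremove} (or \cite[Theorem~12.2]{BBbook}). Your outline is the standard route for this result and is essentially sound, including the uniqueness argument: local $\Np$-regularity of $u$ across $E$ via a Caccioppoli estimate with capacitary cut-offs, then transfer of the quasiminimizing inequality, then the H\"older continuous representative. Two points need repair.

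First, $\psi_\eps$ must equal $1$ on a neighbourhood of $E\cap\supp\eta$, not only of $E\cap\clB$. Outer capacity lets you take a neighbourhood of all of $E$. Otherwise $\eta^p(1-\psi_\eps)^p(M-u)$ need not vanish on $E$, nor lie in $\Np(X)$.

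Second, and more substantively, for $Q>1$ your weighted estimate does not follow from the usual one-test-function computation. With $\zeta=\eta(1-\psi_\eps)$ and $\phi=\zeta^p(M-u)$ you only get
\[
\int_{\phi\ne0} g_u^p\,d\mu \le Q\int_{\phi\ne0}\bigl((1-\zeta^p)g_u+p\zeta^{p-1}(M-u)g_\zeta\bigr)^p\,d\mu .
\]
The term involving $(1-\zeta^p)g_u$ cannot be absorbed where $\zeta$ is small. Convexity of $t\mapsto t^p$ closes the argument only when $Q=1$.

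The standard remedy is hole-filling plus the iteration lemma, run along the level sets of $w:=\eta(1-\psi_\eps)$ instead of concentric balls. Testing with $\min\{1,(w-s)_+/(t-s)\}(M-u)$ gives
\[
\int_{\{w\ge t\}} g_u^p\,d\mu \le \theta\int_{\{w>s\}} g_u^p\,d\mu+\frac{CM^p}{(t-s)^p}\int_X g_w^p\,d\mu, \qquad 0<s<t\le 1,
\]
with $\theta=\theta(p,Q)<1$. The left-hand side is finite because $\{w>0\}$ is relatively compact in $\Om\setm E$, where $u\in\Nploc$. This yields
\[
\int_{B\cap\{\psi_\eps\le1/2\}}g_u^p\,d\mu\le CM^p\int_X(g_\eta+g_{\psi_\eps})^p\,d\mu,
\]
which is all you need before letting $\eps\to0$ via Fatou's lemma.

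The step you expected to be the main obstacle is not one. Since $\Cp(E)=0$, for $\phi\in\Np_0(\Om)$ the function $\tilde\phi:=\phi\chi_{X\setm E}$ equals $\phi$ q.e. So $\tilde\phi\in\Np(X)$ and it vanishes outside $\Om\setm E$, i.e.\ $\tilde\phi\in\Np_0(\Om\setm E)$. You can insert $\tilde\phi$ directly into the quasiminimizing inequality in $\Om\setm E$. Three facts then give the inequality in $\Om$ at once:
\begin{itemize}
\item the sets $\{\tilde\phi\ne0\}$ and $\{\phi\ne0\}$ differ by a null set;
\item $g_{u+\tilde\phi}=g_{u+\phi}$ a.e.;
\item minimal \p-weak upper gradients in $\Om\setm E$ and in $\Om$ agree a.e.
\end{itemize}
Once $u\in\Nploc(\Om)$ is known, you need no truncation of $\phi$, no approximation by $(1-\psi_\eps)\phi$, and no appeal to the characterizations in \cite{ABkellogg}.
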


The classification of unbounded metric spaces as parabolic and
hyperbolic is important in this paper.
For this we first need to define the condenser capacity 
$\cp(K,\Om)$, which we only need for compact $K \subset \Om$.
In this case, it can be defined using Lipschitz functions
or, equivalently, by using $\Np_0(\Om)$-functions,
see e.g.\ Kallunki [Rogovin]--Shan\-mu\-ga\-lin\-gam~\cite{KaSh} or
\cite[Definition~3.1 and Proposition~7.8]{BBglobal}.

\begin{deff} \label{deff-cp}
Let $\Om\subset X$ be a (possibly unbounded) open set.
The \emph{condenser capacity} of a compact set $K \subset \Om$
with respect to $\Om$ is
\begin{equation*} 
\cp(K,\Om) = \inf_{\substack{u \in \Lipc(\Om) \\u \ge 1 \text{ on } K}}
\int_{\Om} g_u^p\, d\mu,
\end{equation*}
where $\Lipc(\Om)=\{f \in \Lip(X) : \supp f \Subset \Om \}$.
\end{deff}

\begin{deff} \label{def-p-par}
Assume that $X$ is unbounded.
Then $X$ is called \emph{\p-hyperbolic}
if $\cp(K,X)>0$ for some compact set $K\subset X$.
Otherwise, $X$ is \emph{\p-parabolic}. 
\end{deff}

The following 
characterization of \p-parabolicity (under global assumptions) was obtained 
in Bj\"orn--Bj\"orn--Lehrb\"ack~\cite{BBLintgreen}.
(Note that Theorem~1.0.1 in Keith--Zhong~\cite{KeithZhong}
provides us with the better Poincar\'e inequality required in~\cite{BBLintgreen}.)

\begin{thm}   \label{thm-p-parab}
\textup{(Theorem~5.5 in~\cite{BBLintgreen})}
Assume that $X$ is unbounded and  $x_0 \in X$. 
If 
\begin{equation}   \label{eq-int-parab=infty}
\int_{1}^\infty \biggl( \frac{r}{\mu(B(x_0,r))} \biggr)^{1/(p-1)} \,dr=\infty,
\end{equation}
then $X$ is \p-parabolic.

If $\mu$ is globally doubling and supports a global \p-Poincar\'e
inequality, then 
$X$ is \p-parabolic 
if and only if \eqref{eq-int-parab=infty} holds.
\end{thm}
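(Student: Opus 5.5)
Since the theorem is quoted from~\cite{BBLintgreen}, this is only a sketch of how I would prove it. The two directions are of different nature. The sufficiency of \eqref{eq-int-parab=infty} should need no doubling or Poincar\'e inequality: I would construct explicit radial Lipschitz test functions. The converse (finite integral implies \p-hyperbolicity) is where the global doubling and global Poincar\'e inequality must enter.

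For the first part, fix a compact $K\subset X$ and choose $k_0$ with $K\subset B_k:=B(x_0,2^k)$ for $k\ge k_0$. For $N>k_0$ and weights $t_k\ge0$ with $\sum_{k=k_0}^{N} t_k=1$, I would define $u=\psi(d(x,x_0))$. Here $\psi=1$ on $[0,2^{k_0}]$, $\psi=0$ on $[2^{N+1},\infty)$, and on each interval $[2^k,2^{k+1}]$ the function $\psi$ decreases linearly by $t_k$. Since $X$ is proper, $u\in\Lipc(X)$ and $u=1$ on $K$. Moreover $g_u\le t_k 2^{-k}$ on the annulus $B_{k+1}\setm B_k$. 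Hence
\[
\cp(K,X)\le\int_X g_u^p\,d\mu\le\sum_{k=k_0}^N c_k t_k^p, \qquad c_k:=2^{-kp}\mu(B_{k+1}).
\]
Choosing $t_k$ proportional to $c_k^{-1/(p-1)}$, i.e.\ the optimizer from H\"older's inequality, gives
\[
\cp(K,X)\le\biggl(\sum_{k=k_0}^N c_k^{-1/(p-1)}\biggr)^{1-p}.
\]
It then remains to compare the sum with the integral, using only the monotonicity of $r\mapsto\mu(B(x_0,r))$:
\[
\int_{2^{k+1}}^{2^{k+2}}\biggl(\frac{r}{\mu(B(x_0,r))}\biggr)^{1/(p-1)}dr \le 2^{k+1}\biggl(\frac{2^{k+2}}{\mu(B_{k+1})}\biggr)^{1/(p-1)} \le C_p\, c_k^{-1/(p-1)}.
\]
The divergence of \eqref{eq-int-parab=infty} therefore forces the sum to diverge as $N\to\infty$, so $\cp(K,X)=0$ for every compact $K$. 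Hence $X$ is \p-parabolic.

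For the converse under global assumptions, I would assume \eqref{eq-int-parab=infty} is finite and show $\cp(\clB_{k_0},X)>0$. Let $u\in\Lipc(X)$ with $u\ge1$ on $B_{k_0}$. I would telescope the averages $u_{B_k}$, which tend to $0$ since $u$ has compact support and $\mu(B_k)\to\infty$ by doubling and connectedness. Each difference $|u_{B_k}-u_{B_{k+1}}|$ is bounded via the Poincar\'e inequality by $C2^k\bigl(\int_{\la B_{k+1}}g_u^p\,d\mu/\mu(B_k)\bigr)^{1/p}$.

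The main obstacle is that this naive telescoping followed by H\"older only yields the condition $\sum(2^{kp}/\mu(B_k))^{1/p}<\infty$. That condition is strictly stronger than the sharp one, because the balls $\la B_k$ are nested rather than of bounded overlap. To reach the exponent $1/(p-1)$, I would first use Keith--Zhong~\cite{KeithZhong} to get a $q$-Poincar\'e inequality with $q<p$. I would then replace the balls by annuli $B_{k+1}\setm B_k$ in the estimates, which would require an annular chaining argument based on doubling and quasiconvexity. Once the energy is split into disjoint pieces $a_k=\int_{B_{k+1}\setm B_k}g_u^p\,d\mu$, the estimate $1\le\sum_k C2^k(a_k/\mu(B_k))^{1/p}$ together with H\"older's inequality with exponents $p$ and $p/(p-1)$ gives
\[
1\le C\biggl(\sum_k a_k\biggr)^{1/p}\biggl(\sum_k\biggl(\frac{2^{kp}}{\mu(B_k)}\biggr)^{1/(p-1)}\biggr)^{(p-1)/p}.
\]
The last sum is comparable to the finite integral by doubling, which yields a positive lower bound for $\int g_u^p\,d\mu$. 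This annular step is the part I expect to be delicate, and I would rely on the argument in~\cite{BBLintgreen} for it.
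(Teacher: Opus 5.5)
Your first part is correct and complete. The dyadic radial test function, the choice $t_k\propto c_k^{-1/(p-1)}$ and the comparison of the sum with the integral use only properness and the monotonicity of $r\mapsto\mu(B(x_0,r))$. So they work under the paper's local assumptions. The spheres $\{d(\cdot,x_0)=2^k\}$ are harmless, since $g_u=0$ a.e.\ on level sets of $u$. Note that the paper itself gives no proof of this theorem: it cites \cite{BBLintgreen}, and uses \cite{KeithZhong} only to supply the better Poincar\'e inequality assumed there.

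The gap is in the converse, exactly at the step you defer, and your proposed mechanism for it does not work as stated. ``Replacing the balls by annuli $B_{k+1}\setm B_k$'' via doubling and quasiconvexity fails because, with a globally doubling measure and a global \p-Poincar\'e inequality, annuli need not even be connected (think of $\R$, or the bow-tie in Example~\ref{ex-bow-ties}(a)). So there is no Poincar\'e inequality on annuli, and averages over them cannot be chained. Moreover, you never say what the $q$-Poincar\'e inequality with $q<p$ would be used for.

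The missing idea is to chain along a single curve to infinity rather than over whole annuli. First pass to the bi-Lipschitz equivalent length metric; this is allowed since $X$ is proper and globally quasiconvex, and doubling, the Poincar\'e inequality, $\cp$ and finiteness of the integral are preserved up to constants. Take a geodesic ray $\ga$ from $x_0$, so that $d(\ga(t),x_0)=t$, and let $B'_k=B(\ga(\tfrac32 2^k),c2^k)$ with $c\la\le\tfrac14$. A chain of boundedly many balls of radius $c2^k$ centred on $\ga$ links $B'_k$ to $B'_{k+1}$. All the dilated balls used in the Poincar\'e inequality lie in $B_{k+2}\setm B_k$, and doubling shows each has measure $\simge\mu(B_k)$. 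Hence the plain \p-Poincar\'e inequality yields
\[
|u_{B'_k}-u_{B'_{k+1}}|\le C2^k\biggl(\frac{a_k}{\mu(B_k)}\biggr)^{1/p},
\qquad a_k:=\int_{B_{k+2}\setm B_k}g_u^p\,d\mu,
\]
with $\sum_k a_k\le 2\int_X g_u^p\,d\mu$. Since $B'_{k_0-1}\subset B_{k_0}$ gives $u_{B'_{k_0-1}}\ge1$, and $u_{B'_k}=0$ for large $k$, your H\"older step now applies verbatim. This argument needs neither annular connectivity nor \cite{KeithZhong}.
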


In particular, if $\mu$ is Ahlfors \p-regular
 and supports a global \p-Poincar\'e inequality,
then $X$ is \p-parabolic.

We will need the following two topological lemmas. 
The proof of Lemma~\ref{lem-Cp-connected}, given in~\cite{BBbook},
only uses the \p-Poincar\'e inequality on each ball separately, without
assuming any
uniformity in the constants, and is thus available here.

\begin{lem} \label{lem-Cp-connected}
\textup{(\cite[Lemmas~4.5 and 4.6]{BBbook})}
Assume that\/ $\Om$ is connected
and let $E \subsetneq \Om$ be relatively closed.
\begin{enumerate}
\item \label{it-Cp} 
$\Cp(E)=0$ if and only if $\Cp(\bdy E \cap \Om)=0$.
\item \label{it-conn}
If $\Cp(E)=0$, then $\Om \setm E$ is connected.
\end{enumerate}
\end{lem}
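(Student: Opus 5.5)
The plan is to turn ``small capacity'' into ``$p$-almost every curve misses the set'', and then use the \p-Poincar\'e inequality on small balls to force certain characteristic functions to be a.e.\ constant on the connected set $\Om$. The only facts needed are these.
\begin{itemize}
\item If $\Cp(A)=0$, then $\mu(A)=0$, and \p-almost every nonconstant rectifiable curve avoids $A$ (the curves meeting $A$ have zero \p-modulus). This is standard and can be quoted from \cite{BBbook}.
\item Each point of $X$ has a ball around it within which the \p-Poincar\'e inequality holds. No uniformity of the constants is needed.
\end{itemize}

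\emph{The key observation.} Let $A\subset\Om$ be relatively closed with $\Cp(A)=0$. Let $U\subset \Om\setm A$ be relatively open in $\Om\setm A$, with $V:=(\Om\setm A)\setm U$ also relatively open. Set $u=\chi_U$ on $\Om$. I claim that $g\equiv0$ is a \p-weak upper gradient of $u$ in $\Om$.
\begin{itemize}
\item For \p-a.e.\ curve $\ga$ in $\Om$, $\ga$ misses $A$, so $\ga$ lies in $U\cup V$.
\item By connectedness of $\ga$, it lies entirely in $U$ or entirely in $V$.
\item Hence $u$ is constant along $\ga$, and \eqref{ug-cond} holds with $g=0$.
\end{itemize}
Now take $x\in\Om$ and a ball $B\ni x$ with $\la B\subset \Om$ on which the Poincar\'e inequality holds. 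Applying it to $u$ with $g=0$ gives $u=u_B$ a.e.\ in $B$, so $u$ is a.e.\ equal to $0$ or $1$ on $B$. Consequently the sets
\[
\{x\in\Om: u=1 \text{ a.e.\ near } x\}
\quad\text{and}\quad
\{x\in\Om: u=0 \text{ a.e.\ near } x\}
\]
are disjoint, open, and cover $\Om$. Since $\Om$ is connected, one of them is empty, and $u$ is a.e.\ constant on $\Om$.

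\emph{Proof of \ref{it-Cp}.} Since $E$ is relatively closed, $\bdy E\cap\Om\subset E$, so the forward implication follows from monotonicity of $\Cp$. For the converse, assume $\Cp(\bdy E\cap\Om)=0$.
\begin{itemize}
\item Apply the observation with $A=\bdy E\cap\Om$, $U=\operatorname{int}E$ and $V=\Om\setm E$. Both $U$ and $V$ are open and together they cover $\Om\setm A$.
\item Thus $\chi_{\operatorname{int}E}$ is a.e.\ constant on $\Om$.
\item If the constant were $1$, the nonempty open set $\Om\setm E$ (nonempty since $E\ne\Om$) would have measure zero. This is impossible, since balls have positive measure.
\item Hence $\mu(\operatorname{int}E)=0$, so $\operatorname{int}E=\emptyset$ and $E=\bdy E\cap\Om$ has zero capacity.
\end{itemize}

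\emph{Proof of \ref{it-conn}.} Suppose $\Cp(E)=0$ and $\Om\setm E=U\cup V$ with $U,V$ disjoint, nonempty and open.
\begin{itemize}
\item Apply the observation with $A=E$; then $\chi_U$ is a.e.\ constant on $\Om$.
\item Both $U$ and $V$ are nonempty open sets, so both have positive measure. This contradicts $\chi_U$ being a.e.\ constant.
\end{itemize}

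The main technical point is the curve-avoidance fact for capacity-zero sets, together with the local-to-global step. The latter works because the Poincar\'e inequality is applied only ball by ball, and constancy then propagates through the connectedness of $\Om$.
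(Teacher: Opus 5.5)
Your proof is correct and is essentially the argument behind \cite[Lemmas~4.5 and~4.6]{BBbook}, which the paper cites instead of giving its own proof. As there, \p-almost every curve avoids a set of zero capacity, so the relevant characteristic function has $0$ as a \p-weak upper gradient. The Poincar\'e inequality applied ball by ball (with no uniformity of constants, as the paper remarks), together with the connectedness of $\Om$, then forces that function to be a.e.\ constant. The one step worth stating explicitly is the passage from ``a.e.\ constant near each point'' to ``a.e.\ constant on $\Om$'', which uses that $\Om$ is Lindel\"of; this holds because $X$ is proper and hence separable.
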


\begin{lem} \label{lem-G_0-open}
Let $G$ be open 
and assume that 
$\Cp(\bdy G \setm \{x_0\})=0$
for some $x_0 \in \bdy G$.
Then the set $G_0:=\clG \setm \{x_0\}$ is open.
\end{lem}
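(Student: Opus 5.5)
We need to prove that $G_0 = \clG \setm \{x_0\}$ is open. Write $E := \bdy G \setm \{x_0\}$, so that $\Cp(E)=0$. Since $\clG = G \cup \bdy G$, we have $G_0 = G \cup E$. Points of $G$ are interior points of $G_0$, because $G$ is open. So it suffices to show that every $x \in E$ has a neighbourhood contained in $G_0$.

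Fix $x \in E$ and choose $r>0$ with $r < d(x,x_0)$; then $x_0 \notin B:=B(x,r)$. The plan is to show $B \subset \clG$, which gives $B \subset G_0$. Suppose instead that $B \setm \clG \ne \emptyset$.

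The key tool is Lemma~\ref{lem-Cp-connected} applied in the connected open set $B$. Balls need not be connected in a general metric space, so first use local connectedness of $X$ (noted at the start of Section~\ref{sect-harm}): let $U$ be the component of $B$ containing $x$. Then $U$ is open and connected.

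Now consider $F := U \cap \bdy G$, which is relatively closed in $U$. Since $x_0 \notin U$, we have $F \subset E$, and hence $\Cp(F)=0$. Also $F \ne U$: the point $x \in \bdy G$ lies in $U$, so $U$ meets $G$, and points of $G$ are not in $\bdy G$. By Lemma~\ref{lem-Cp-connected}\ref{it-conn}, the set $U \setm F = U \setm \bdy G$ is connected.

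This set is the disjoint union of the two relatively open sets $U \cap G$ and $U \setm \clG$. The first is nonempty, because $x \in \clG$ and $U$ is a neighbourhood of $x$. Connectedness therefore forces $U \setm \clG = \emptyset$, that is, $U \subset \clG$. Hence $U \subset G_0$ is a neighbourhood of $x$ inside $G_0$.

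The only delicate point is that balls need not be connected. Replacing $B$ by the component $U$, which is open by local connectedness, handles this. I therefore do not expect a real obstacle; the argument is a direct application of Lemma~\ref{lem-Cp-connected}\ref{it-conn}.
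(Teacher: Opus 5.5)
Your proof is correct and takes essentially the same route as the paper. Both arguments take the (open) component of a small ball around the boundary point that avoids $x_0$, apply Lemma~\ref{lem-Cp-connected}\ref{it-conn} to conclude that removing the capacity-zero set $\bdy G\setm\{x_0\}$ leaves it connected, and then use the splitting into $G$ and $X\setm\clG$ to force the component into $\clG$; you also explicitly verify the hypothesis $F\ne U$ of that lemma, which the paper leaves implicit.
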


\begin{proof}
It suffices to consider $y\in \bdy G\setm\{x_0\}$ and show that 
a neighbourhood of $y$ is contained in $G_0$.
Choose 
$0<\de<d(x_0,y)$
and let $V$ be the component of $B(y,\de)$ that contains $y$. 
(Recall that balls need not be connected.)
Then $B(y,\de')\subset V$ for some $\de'>0$ 
(because $X$ is locally connected).
Since $y\in \bdy G$,
there exists $z\in B(y,\de')\cap G  \subset V\cap G$.

Because $\Cp(\bdy G \setm \{x_0\})=0$
and $x_0 \notin V$, Lemma~\ref{lem-Cp-connected}\ref{it-conn}
implies that the set $V\setm \bdy G$ is connected.
At the same time,  $V\setm \bdy G$ can be written as a disjoint union of the
open sets $V\cap G\ni z$ and $V\setm \clG$.
Hence $V\setm \clG=\emptyset$
and consequently $V\subset \clG$.
As $x_0 \notin V$, we conclude that $V \subset G_0$,
so $G_0$ is indeed open.
\end{proof}

\section{Sobolev and Perron solutions of the Dirichlet problem}
\label{sect-Perron}

\emph{Recall the general assumptions from the beginning of Sections~\ref{sect-ug}
and~\ref{sect-harm}.}

\medskip

The Dirichlet problem is  
a useful tool for constructing \p-harmonic functions
with various properties.
We will use two types of solutions of 
the Dirichlet problem:
Sobolev and Perron solutions.
(The Sobolev solutions are 
sometimes called \p-harmonic extensions,
but they are not extensions in the sense used in this paper,
so we will avoid that terminology.) 
Following Hansevi~\cite[Definition~4.6]{Hansevi1}
we define the Sobolev solutions for boundary data in the
Dirichlet space $\Dp(\Om)$  as follows.
They will be used  to prove Proposition~\ref{prop-not-locconn}
in Section~\ref{sect-loc-conn}.

\begin{deff}  \label{def-harm-ext}
Assume that $\Om$ is an open set with $\Cp(X \setm \Om)>0$.
Let $f \in \Dp(\Om)$.
Then the \emph{Sobolev solution}
$H_\Om f$ of the Dirichlet problem with boundary data
$f$ in $\Om$ is the unique continuous function
in $\Om$ such that $f-H_\Om f \in \Dp_0(\Om)$ and
\[
     \int_\Om g_{H_\Om f}^p \,d\mu 
    \le \int_\Om g_{H_\Om f+\phi}^p \,d\mu \quad
\text{for every } \phi \in \Dp_0(\Om),
\]
where
\[
\Dp_0(\Om):=
  \{\phi|_{\Om} : \phi \in \Dp(X) \text{ and }
        \phi=0 \text{ on } X \setm \Om\}.
\]
\end{deff}

The Sobolev solution $H_\Om f$ exists  
and is unique by Hansevi~\cite[Theorem~4.4]{Hansevi1}.
For bounded $\Om$ and $f \in \Np(X)$,
the definition
of $H_\Om f$ coincides with other definitions in the literature, such
as in Shan\-mu\-ga\-lin\-gam~\cite[Theorem~5.6]{Sh-harm},
Bj\"orn--Bj\"orn--Shan\-mu\-ga\-lin\-gam~\cite[Definition~3.3]{BBS}
and Bj\"orn--Bj\"orn~\cite[Definition~8.31]{BBbook}.
The existence, uniqueness and other properties of $H_\Om f$ in bounded
sets were obtained in these references.

Perron solutions make it possible to solve the Dirichlet problem
for general boundary data.
They 
will always be considered with respect to the
extended boundary 
\[
\bdystar \Om:= \begin{cases}   
     \bdy\Om \cup \{\binfty\},   &  \text{if $\Om$ is unbounded,}  \\ 
     \bdy \Om,  & \text{otherwise,}  
\end{cases}
\]
where $\binfty$ is the point added in the \emph{one-point compactification} 
$X \cup \{\binfty\}$ of $X$ when $X$ is unbounded.
In particular, $\bdystar \Om$ is always compact.
Note that $\bdystar \Om=\emptyset$ if and only if $X$ is bounded and $\Om=X$.
In this case, the Dirichlet problem for \p-harmonic functions
does not make sense.

\begin{deff}\label{def:Perron}
Assume that $\bdystar \Om \ne \emptyset$.
  Given $f:\bdystar\Omega\to\eR$, 
let $\UU_f(\Omega)$ be the collection of all superharmonic functions 
$u$ in $\Omega$ that are bounded from below and such that 
\[
	\liminf_{\Omega\ni y\to x} u(y) \geq f(x)
	\quad\textup{for all }x\in\bdystar\Omega.
\]
The \emph{upper Perron solution} of $f$ is defined by 
\[
	\uP_\Omega f(x)
	= \inf_{u\in\UU_f(\Omega)} u(x),
	\quad x\in\Omega.
\]
The lower Perron solution is defined by
$\lP_\Omega f = - \uP_\Omega f$.
When $\uP_\Omega f=\lP_\Omega f$, we 
let $P_\Omega f=\uP_\Omega f$.
If in addition $P_\Omega f$ is real-valued, then $f$ is said to be 
\emph{resolutive} (for~$\Omega$). 
\end{deff}

In each component of $\Omega$, $\uP_\Om f$ is either \p-harmonic or 
identically $\pm\infty$, 
by Theorem~4.1 in Bj\"orn--Bj\"orn--Shan\-mu\-ga\-lin\-gam~\cite{BBS2}
(or \cite[Theorem~10.10]{BBbook}) whose
proof applies also to unbounded $\Om$.
Moreover, it follows from the comparison principle
between sub- and superharmonic functions in 
Hansevi~\cite[Theorem~6.2]{Hansevi2} that
\begin{equation*} 
  \lP_\Om f \le \uP_\Om f
  \quad \text{for all } f: \bdystar \Om \to \eR.
\end{equation*}  
(In~\cite{Hansevi2} it is assumed that $\Cp(X \setm\Om)>0$,
but this is not needed in the proof, provided that
$\bdystar \Om \ne \emptyset$.)

Under the assumptions in this paper, 
continuous functions on $\bdystar \Om$ need not be resolutive,
see e.g.\ Bj\"orn--Bj\"orn~\cite[Proposition~11.2]{BBglobal}.
We therefore make the following definition.

\begin{deff}\label{def:reg}
A boundary point $x_0\in\bdystar\Omega$ is \emph{regular} 
(for $\Om$) 
if 
\[
	\lim_{\Omega\ni y\to x_0}\uP_\Om f(y)
	= f(x_0)
	\quad\text{for all }f\in C(\bdystar\Omega).
\]
A boundary point $x_0 \in \bdystar\Omega$ is \emph{irregular} if
it is not regular.
\end{deff}

\begin{thm} \label{thm-Kellogg}
\textup{(The Kellogg property, Bj\"orn--Hansevi~\cite[Theorem~1.2]{BHansevi1})}
If $I_\Om \subset \bdystar \Om$ is the set of irregular boundary
points, then $\Cp(I_\Om \setm \{\binfty\})=0$.
\end{thm}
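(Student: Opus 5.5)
The statement is cited from Bj\"orn--Hansevi~\cite{BHansevi1}, so I only sketch how one reduces it to the bounded case. The idea is to localize. First, use the Kellogg property for \emph{bounded} open sets (\cite{BBS2}, \cite[Theorem~10.5]{BBbook}). Its proof only uses doubling and the \p-Poincar\'e inequality within a fixed ball. By the remarks at the beginning of Section~\ref{sect-harm}, these hold within every ball under our local assumptions, so the irregular boundary points of a bounded open set have zero capacity. It then remains to transfer this from the bounded sets $\Om\cap B$ to the possibly unbounded $\Om$, whose Perron solutions are taken with respect to $\bdystar\Om$.

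Step 1: regularity is a local property. For $x_0\in\bdy\Om$ and a ball $B\ni x_0$, I would show that if $x_0$ is regular for $\Om\cap B$, then it is regular for $\Om$. This is the step I expect to be the main obstacle, and I would handle it via barriers.
\begin{enumerate}
\item Regularity of $x_0$ for the bounded set $\Om\cap B$ yields a local weak barrier. This is a positive superharmonic $w$ in $\Om\cap B$ with $w(y)\to0$ as $y\to x_0$ and $\liminf w\ge c>0$ near $\bdy B\cap\Om$. One can take $w$ to be the Perron solution in $\Om\cap B$ of $d(\cdot,x_0)$, truncated.
\item Paste this into a global barrier $W=\min\{w,c\}$ in $\Om\cap B'$ (for a smaller ball $B'$) and $W=c$ elsewhere in $\Om$. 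This $W$ is superharmonic in $\Om$ by the pasting lemma for superharmonic functions (\cite[Chapter~9]{BBbook}).
\item Let $f\in C(\bdystar\Om)$ and $\eps>0$. Since $f$ is bounded, the function $f(x_0)+\eps+M W$ lies in $\UU_f(\Om)$ for suitable $M$. At $\binfty$ nothing extra is needed, because constants are harmonic and $W\equiv c$ outside $B'$.
\item Hence $\limsup_{y\to x_0}\uP_\Om f(y)\le f(x_0)+\eps$. Applying the same to $-f$ and using $\lP_\Om f\le\uP_\Om f$ gives regularity of $x_0$ for $\Om$.
\end{enumerate}
The delicate points are that the Perron solution of a continuous function in the bounded set $\Om\cap B$ is well behaved there, and that pasting preserves superharmonicity. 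Both rely on Chapters~9--11 of \cite{BBbook}, which are valid under the present assumptions.

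Step 2: countable covering. By properness, $X$ is Lindel\"of, so $\bdy\Om$ can be covered by countably many balls $B_j$. By Step~1, an irregular point of $\Om$ lying in $B_j$ is irregular for the bounded set $\Om\cap 2B_j$. Hence $I_\Om\setm\{\binfty\}\subset\bigcup_j I_{\Om\cap 2B_j}$, and each set in the union has zero capacity by the bounded Kellogg property. Countable subadditivity of $\Cp$ then gives $\Cp(I_\Om\setm\{\binfty\})=0$.
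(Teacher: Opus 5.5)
The paper gives no proof of this statement; it is quoted from Bj\"orn--Hansevi. Your overall reduction is the standard route: regularity is local, the Kellogg property holds for the bounded sets $\Om\cap 2B_j$, and one concludes by countable subadditivity. Step~2 is fine.

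The gap is in Step~1. The Perron solution $w=P_{\Om\cap B}\,d(\cdot,x_0)$ does not have the lower bounds your barrier argument uses. A Perron solution attains its boundary values only at regular points, and points of $\bdy(\Om\cap B)\setm\{x_0\}$ need not be regular; this applies both on $\bdy\Om$ and on $\Om\cap\bdy B$. Moreover, Perron solutions of continuous data do not change if the data are modified on a set of zero capacity. For instance, suppose a component $V$ of $\Om\cap B$ has $\bdy V\subset\{x_0\}\cup Z$ with $\Cp(Z)=0$, which can happen when $x_0$ is a cut point of positive capacity. Then $w\equiv0$ in $V$, although $d(\cdot,x_0)>0$ on $Z$.

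So your argument lacks two bounds, and positivity of $w$ inside $\Om\cap B$ gives neither:
\begin{itemize}
\item $\inf_{\Om\cap\bdy B'}w>0$, which you need to paste $\min\{w,c\}$ with $c$ across $\Om\cap\bdy B'$. This set runs up to $\bdy\Om$, so interior continuity of $w$ does not help.
\item A uniform bound $\liminf_{y\to x}W(y)\ge c'>0$ at all $x\in\bdy\Om\cap\overline{B'}\setm B(x_0,\de)$, which you need for $f(x_0)+\eps+MW\in\UU_f(\Om)$.
\end{itemize}
In the example above your construction fails for every choice of $B'$ and $M$ when $f>f(x_0)+\eps$ on $Z$. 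If $V\not\subset B'$, then $W$ is not lower semicontinuous on $V\cap\bdy B'$. If $V\subset B'$, then $W=0$ in $V$, and the boundary inequality fails at $Z$.

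The repair is to drop barriers and compare directly with the upper class of $\Om\cap B$. There the boundary inequalities hold at every boundary point by definition, regular or not.
\begin{enumerate}
\item Let $M=\max_{\bdystar\Om}|f|$. Pick $\de>0$ with $B(x_0,\de)\subset B$ and $f\le f(x_0)+\eps$ on $\bdy\Om\cap B(x_0,\de)$.
\item Let $h=f(x_0)+\eps+(2M/\de)\,d(\cdot,x_0)$. Then $h\ge M$ on $\bdy B$, since $\bdy B\cap B(x_0,\de)=\emptyset$, and $\min\{h,M\}\ge f$ on $\bdy\Om$.
\item For $s\in\UU_h(\Om\cap B)$, set $S=\min\{s,M\}$ in $\Om\cap B$ and $S=M$ in $\Om\setm B$. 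Since $\liminf s\ge h\ge M$ at $\Om\cap\bdy B$, $S$ is lower semicontinuous. By the pasting lemma it is superharmonic in $\Om$, and it is bounded below.
\item We have $\liminf_{\Om\ni y\to x}S(y)\ge f(x)$ for every $x\in\bdystar\Om$. At points of $\bdy(\Om\cap B)$ this follows from $\liminf s\ge h$ and $\min\{h,M\}\ge f$. At all other points of $\bdystar\Om$, including $\binfty$, we have $S=M$ nearby. Hence $S\in\UU_f(\Om)$.
\item Therefore $\uP_\Om f\le\uP_{\Om\cap B}h$ in $\Om\cap B$. Regularity of $x_0$ for $\Om\cap B$ gives $\limsup_{\Om\ni y\to x_0}\uP_\Om f(y)\le h(x_0)=f(x_0)+\eps$, and your part (d) then concludes.
\end{enumerate}
A genuine barrier can also be built from the obstacle problem with obstacle $d(\cdot,x_0)$, whose solution lies above $d(\cdot,x_0)$ everywhere. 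That route needs considerably more machinery.
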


For bounded $\Om$ the Kellogg property was earlier obtained by 
Bj\"orn--Bj\"orn--Shan\-mu\-ga\-lin\-gam~\cite[Theorem~3.9]{BBS}, 
\cite[Theorem~6.1]{BBS2}.

\section{Removability of compact sets} 
\label{sect-cpt}

\emph{Recall the general assumptions from the beginning of Sections~\ref{sect-ug}
and~\ref{sect-harm}.}

\medskip

We now consider the case when $K\subsetneq \Om$ is compact.
(We exclude the case $K=\Om$, which can only happen when $K=\Om=X$.
In this case it does not make   
 sense to talk about removability for functions defined in
$\Om \setm K=\emptyset$.)

The following result shows, in particular,
that a compact set with positive
capacity is never removable if $\Cp(X \setm \Om)>0$
or if $X$ is \p-hyperbolic.

\begin{thm} \label{thm-cpt-pharm-nec}
Assume that $K\subsetneq\Om$ is compact with $\Cp(K)>0$.
Also assume that every bounded 
\p-harmonic function in $G:=\Om \setm K$
has a quasiharmonic extension to $\Om$,
which in particular holds if 
$K$ is removable for bounded $Q$-quasiharmonic functions in $G$.

Then all of the following conditions are satisfied\/\textup:
\begin{enumerate}
\item \label{r-1}
$\Cp(X \setm \Om)=0$,
\item \label{r-a}
$\Om$ is connected,
\item \label{r-2}
$G$ is connected, 
\item \label{r-3b}
$\Cp(\bdy K)>0$,
\item \label{r-3}
 there is
$x_0 \in \bdy K$ such that $\Cp(\bdy K \setm \{x_0\})=0$,
\item \label{r-5}
$X$ is \p-parabolic or bounded.
\end{enumerate}
\end{thm}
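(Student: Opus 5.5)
We derive each condition by contradiction from the same device. Assume the condition fails and construct a bounded \p-harmonic function $u$ in $G$, taking values in $[0,1]$, which equals $1$ at some point of $\bdy K$ and is not identically $1$ near that point. Let $U$ be its quasiharmonic extension to $\Om$. By continuity and density of $G$ near $\bdy K$, $U$ takes values in $[0,1]$ and attains the value $1$ at an interior point of $\Om$. The strong maximum principle (Theorem~\ref{thm-max}) then forces $U\equiv1$ on the relevant component of $\Om$, which is the contradiction. To produce such $u$ we use upper Perron solutions $\uP_G f$ with continuous data $f$ on $\bdystar G$; these are \p-harmonic and bounded by the bounds of $f$. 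The Kellogg property (Theorem~\ref{thm-Kellogg}) shows that $\uP_G f$ attains the prescribed values at all points of $\bdystar G\setm\{\binfty\}$ outside a set of capacity zero.

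We first prove \ref{r-3b} and then \ref{r-1}, since the latter uses the former. For \ref{r-3b}, apply Lemma~\ref{lem-Cp-connected}\ref{it-Cp} in the connected space $X$ with $E=K\ne X$. This gives $\Cp(\bdy K)>0$ directly from $\Cp(K)>0$. For \ref{r-1}, suppose $\Cp(X\setm\Om)>0$. Since $\Cp(\bdy K)>0$, some component $\Om'$ of $\Om$ has $\Cp(\bdy K\cap\Om')>0$. For this $\Om'$ we want $\Cp(\bdy\Om')>0$. If instead $\Cp(\bdy\Om')=0$, Lemma~\ref{lem-Cp-connected}\ref{it-Cp} applied in $X$ with $E=X\setm\Om'$ would give $\Cp(X\setm\Om')=0$, hence $\Cp(X\setm\Om)=0$, contrary to the assumption. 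Now let $f=1$ on $\bdy K$ and $f=0$ on $\bdystar\Om$; this is continuous because the two sets are disjoint compacts. Put $u=\uP_G f$. At a regular point of $\bdy K\cap\Om'$ the extension $U$ equals $1$, so $U\equiv1$ in $\Om'$. On the other hand, $u\to0$ at regular points of $\bdy\Om'$, and there are such points since $\Cp(\bdy\Om')>0$. This contradicts $U\equiv1$. I expect the details of choosing $\Om'$ here to need some care.

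For \ref{r-a}, let $\Om'$ be a component of $\Om$ that does not meet $K$. Then $\bdy\Om'\subset X\setm\Om$, which has zero capacity by \ref{r-1}. By Lemma~\ref{lem-Cp-connected}\ref{it-Cp}, $\Cp(X\setm\Om')=0$. But $X\setm\Om'\supset K$, which has positive capacity, a contradiction. The same argument excludes two components both meeting $K$: each has boundary of zero capacity, so the complement of each has zero capacity, impossible for two disjoint open sets.

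For \ref{r-2}, suppose $G$ is disconnected and let $u=1$ on one component $G_1$ and $0$ on the rest. Then $U=1$ on $G_1$ and $U$ attains its maximum in the connected set $\Om$, so $U\equiv1$, contradicting $u=0$ elsewhere.

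For \ref{r-3}, suppose $\Cp(\bdy K\setm\{x\})>0$ for every $x\in\bdy K$. Using compactness and countable subadditivity of $\Cp$, we find two disjoint compact sets $A_1,A_2\subset\bdy K$ of positive capacity, separated by a positive distance. Take continuous data $f$ on $\bdystar G$ with $f=1$ near $A_1$ and $f=0$ near $A_2$ and on $\bdystar\Om$. Then $U=1$ at regular points of $A_1$, which lie in the connected set $\Om$. Hence $U\equiv1$, contradicting $U=0$ at regular points of $A_2$.

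The main obstacle is \ref{r-5}. By \ref{r-1} and Theorem~\ref{thm-removability-qharm} we may replace $\Om$ by $X$, since bounded \p-harmonic functions extend across $X\setm\Om$. So suppose $X$ is unbounded and \p-hyperbolic. The first step is to show $\cp(K,X)>0$. Hyperbolicity gives $\cp(K',X)>0$ for some compact $K'$, and I would transfer this to $K$ using $\Cp(K)>0$, via a chaining or Poincar\'e-type comparison of condenser capacities on large balls. Next, let $u_j$ be the capacitary potentials of $K$ in balls $B_j=B(x_0,j)$. These increase to a function $u$ that is \p-harmonic in $X\setm K$, takes values in $[0,1]$, and equals $1$ q.e.\ on $K$. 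Moreover $u\not\equiv1$: otherwise the energies $\int g_{u_j}^p\,d\mu\to\cp(K,X)>0$ would contradict lower semicontinuity of energy applied to a constant limit. The extension $U$ equals $1$ at a regular point of $\bdy K$, so $U\equiv1$ on $X$, which contradicts $u\not\equiv 1$.
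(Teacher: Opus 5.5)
Parts \ref{r-1}--\ref{r-3} of your proposal are essentially the paper's arguments, with harmless variants: you prove \ref{r-3b} first in $X$ itself, and in \ref{r-3} you use two separated compact pieces of $\bdy K$ where the paper uses two regular points and the data $(1-d(x,x_1)/d(x_0,x_1))_+$. The genuine gap is in \ref{r-5}, at the claim $u\not\equiv 1$.

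\textbf{Why the argument in \ref{r-5} fails.} Lower semicontinuity of energy only gives $\int g_u^p\,d\mu\le\liminf_{j\to\infty}\int g_{u_j}^p\,d\mu$. The constant limit $u\equiv1$ satisfies this trivially, so it yields no contradiction. Nothing in your argument prevents $u_j\uparrow 1$ pointwise while energy of size at least $\cp(K,X)$ escapes to infinity. Ruling this out is exactly where \p-hyperbolicity must be used in a nontrivial way. The paper does not do this by hand. It uses \ref{r-3b} and \ref{r-3} to get $x_0\in\bdy K$ with $\Cp(\{x_0\})>0$. It then takes the global Green function with pole $x_0$, from the Bj\"orn--Bj\"orn existence theorem in \p-hyperbolic spaces. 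This function is bounded (since $\Cp(\{x_0\})>0$), attains its maximum at $x_0$, and has infimum $0$. The strong maximum principle applied to its extension then gives the contradiction.

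\textbf{How to repair your route.} You can keep the capacitary potentials if you add two steps.
\begin{enumerate}
\item Since $(u_j+u_k)/2$ is admissible for $\cp(K,B_{\max\{j,k\}})\ge \cp(K,X)=:c_0$, and $\int g_{u_j}^p\,d\mu=\cp(K,B_j)\to c_0$, uniform convexity of $L^p$ shows that $g_{u_j}\to h$ in $L^p(X)$ with $\int h^p\,d\mu=c_0>0$.
\item Test the minimality of $u_j$ with $\eta^p(1-u_j)$, where $\eta$ is a cut-off function for $B_R$ in $B_{2R}$. This gives the Caccioppoli estimate $\int_{B_R} g_{u_j}^p\,d\mu\le p^pR^{-p}\int_{B_{2R}}(1-u_j)^p\,d\mu$. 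If $u\equiv1$, the right-hand side tends to $0$ for every $R$, which forces $h=0$, a contradiction.
\end{enumerate}
You also still have to carry out the transfer from $\cp(K',X)>0$ to $\cp(K,X)>0$, which you only sketch. It does follow from the Poincar\'e inequality within a ball containing $K\cup K'$ together with $\Cp(K)>0$.

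\textbf{A smaller point.} Continuity and density give $0\le U\le 1$ only on $G\cup\bdy K$, not at interior points of $K$. The clean fix, used in the paper, is to note that $\max_K U$ is attained on the compact set $K$ and is at least $1\ge\sup_G u$. Hence the maximum of $U$ over $\Om$ (or over $\Om'$) is attained, and Theorem~\ref{thm-max} applies as you intend.
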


From \ref{r-3b} and \ref{r-3} it follows that $\Cp(\{x_0\})>0$.
The conditions in 
Theorem~\ref{thm-cpt-pharm-nec} are alone not enough to guarantee
removability, as seen by 
Example~\ref{ex-factorial2} below.
(Depending on the choices in that construction
it yields a bounded or a \p-parabolic space.)
Removability is not even guaranteed
if we add the  requirement that 
$G$ is locally connected at every $x\in \bdy K$,
which is a necessary condition
by Proposition~\ref{prop-not-locconn}.
Examples~\ref{ex-hyp-R} and~\ref{ex-two-R} show that 
Theorem~\ref{thm-cpt-pharm-nec} fails if $K \subsetneq X$ is merely
assumed to be (a possibly unbounded) closed subset of $X$.

\begin{proof}
\ref{r-1}
Assume, for a contradiction, that $\Cp(X \setm \Om)>0$.
As $X$ is Lindel\"of, there is a component $\Om'$ 
of $\Om$ such that $\Cp(K \cap \Om')>0$.
As $\Cp(X \setm \Om') \ge \Cp(X \setm \Om)>0$,
it follows from Lemma~\ref{lem-Cp-connected}\ref{it-Cp} 
that $\Cp(\bdy \Om')>0$ and also that $\Cp(\bdy (K\cap \Om'))>0$.
By the Kellogg property (Theorem~\ref{thm-Kellogg}), 
there are at least two regular boundary points for $G$:
$x_0 \in \bdy \Om'$ and $x_1 \in \bdy (K \cap \Om')$.

Since $K\subset\Om$ is compact, we see that 
$f:=\chi_{K} \in C(\bdy^* G)$ 
(with $f(\binfty)=0$).
Then  $u:=\uP_{G} f$ is \p-harmonic in $G$
and $0 \le u \le 1$ in $G$.
By assumption,  $u$ has a 
quasiharmonic  (and thus continuous) extension $\ut$ to $\Om$.
Since $x_0$ and $x_1$ are regular, we see that 
\[
\lim_{G\ni y\to x_j} \ut(y) = \lim_{G\ni y\to x_j} u(y) = f(x_j)=j, 
\quad j=0,1.
\]
In particular, $\ut$ is nonconstant in $\Om'$ and
\[
   \max_{\Om'} \ut = \max_{K\cap\Om'} \ut 
  \ge \ut(x_1) =1 \ge \max_G u
\]
is attained at some point in the compact set $K \cap \Om'$.
But this contradicts the strong maximum principle
(Theorem~\ref{thm-max}).
Hence, $\Cp(X \setm \Om)=0$.

\ref{r-a}
Since $\Cp(X \setm\Om)=0$ (by \ref{r-1}), 
it follows from Lemma~\ref{lem-Cp-connected}\ref{it-Cp} 
(with $\Om$ replaced by $X$) that $\Om$ is connected.

\ref{r-2} 
Let $G'$ be a component of $G$ and $u=\chi_{G'}:G \to \R$,
which is a bounded \p-harmonic function in $G$.
By assumption, $u$ has a quasiharmonic (and thus continuous)
extension $\ut$ to $\Om$.
Then either
\[
\max_K \ut \ge 1 = \max_G u \quad \text{or} \quad
\max_K \ut <1 =  \max_{G} u.
\]
In both cases, $\max_{\Om}\ut$ 
is attained at some point in $\Om$
(in $K$ or $G'$, respectively).
Since $\Om$ is connected (by \ref{r-a}), it
follows from the strong maximum principle
(Theorem~\ref{thm-max}), that $\ut$ is constant in $\Om$.
Hence $G=G'$, i.e.\  $G$ is connected.

\ref{r-3b}
Since $\Om$ is connected (by \ref{r-a}) and $\Cp(K)>0$, 
this follows from Lemma~\ref{lem-Cp-connected}\ref{it-Cp}.

\ref{r-3} Assume, for a contradiction, that $\Cp(\bdy K \setm \{x\})>0$
for every $x\in \bdy K$.
By the Kellogg property (Theorem~\ref{thm-Kellogg}), 
there is a regular boundary point $x_0 \in \bdy K$ for $G$.
As $\Cp(\bdy K \setm \{x_0\})>0$ there is, again by the Kellogg property,
another regular boundary point
$x_1 \in \bdy K \setm \{x_0\}$  (for $G$).
We can now obtain a contradiction just as in the second paragraph of
the proof of \ref{r-1} above 
(with $\Om'=\Om$ since $\Om$ is connected by \ref{r-a}).
Hence \ref{r-3} must hold.

\ref{r-5} 
It follows from  the already proved properties \ref{r-3b} 
and \ref{r-3} that $\Cp(\{x_0\})>0$ for some $x_0\in \bdy G$.
If $X$ is \p-hyperbolic,
Theorem~1.1 in Bj\"orn--Bj\"orn~\cite{BBglobal}
provides us with a  Green function $u$ in $X$ with singularity at $x_0$.
In particular, $u$ is 
a positive bounded \p-harmonic function in $X \setm \{x_0\}$
such that
\[
0 < u(x_0) = \lim_{y\to x_0}u(y) = \max_{X} u < \infty
\quad \text{and} \quad
\inf_X u=0.
\]
(The boundedness of $u$ in $X$
follows from 
$\Cp(\{x_0\})>0$ and Lemma~9.5 in~\cite{BBglobal}.)
By the removability assumption for $K$, 
 $u|_G$ extends as a 
quasiharmonic (and thus continuous) function $\ut$ 
in $\Om$.  
By continuity, $\ut(x_0)=u(x_0)$, and thus the  maximum 
\[
   \max_\Om \ut = \max_K \ut \ge
\ut(x_0)
\]
is attained at some point in the compact set $K$.
But this contradicts the strong maximum principle
(Theorem~\ref{thm-max}).
Therefore  \ref{r-5} must hold.
\end{proof}

\section{The Liouville theorem on \texorpdfstring{$X$}{X}
and Theorems~\ref{thm-main-char-intro} and~\ref{thm-Liouville-para}}
\label{sect-Liouville}

\emph{Recall the general assumptions from the beginning of Sections~\ref{sect-ug}
and~\ref{sect-harm}.}

\medskip

In this section we will 
prove the Liouville theorem (Theorem~\ref{thm-Liouville-para})
for quasisuperharmonic functions in \p-parabolic spaces.
In this paper it would be enough to consider 
quasiharmonic functions, but as it may be of
independent interest we obtain 
the Liouville theorem in this generality.

Quasisuperharmonic functions were introduced by 
Kinnunen--Martio~\cite[Definition~7.1]{KiMa03}.
The following definition will be convenient here.
By Theorem~7.10 in~\cite{KiMa03} it is equivalent to the definition in~\cite{KiMa03}.
Also, a function is superharmonic if and only if it is $1$-quasisuperharmonic, 
by Theorem~6.1 in Bj\"orn~\cite{ABsuper} (or \cite[Theorem~9.24]{BBbook}).

\begin{deff} \label{deff-quasisuperharm}
Let $Q \ge 1$.
A function $u \in \Nploc(\Om)$ is a
\emph{$Q$-quasisuperminimizer}
in $\Om$ if 
\[ 
      \int_{\phi \ne 0} g^p_u \, d\mu
           \le Q \int_{\phi \ne 0} g_{u+\phi}^p \, d\mu
           \quad \text{for all } 0 \le \phi \in \Np_0(\Om).
\] 
A function $u:\Om \to (-\infty,\infty]$ 
is  \emph{$Q$-quasisuperharmonic} in $\Om$
if 
\begin{enumerate}
\item 
$u$ is \emph{$\essliminf$-regularized}, i.e.\
\begin{equation} \label{eq-essliminf-reg}
u(x)=\essliminf_{y \to x} u(y) \quad \text{for } x \in \Om,
\end{equation}
\item 
 $u$ is not identically $\infty$ in any component of $\Om$, and
\item 
$\min\{u,k\}$ is a $Q$-quasisuperminimizer in $\Om$ for
every $k \in \R$.
\end{enumerate}
\end{deff}

It is not difficult to see that a function $u$ is $Q$-quasiharmonic 
if and only if 
both $u$ and $-u$ are $Q$-quasisuperharmonic.

It was observed in~\cite{KiMa03} that the proof
of the weak Harnack inequality for quasiminimizers in
Kinnunen--Shan\-mu\-ga\-lin\-gam~\cite{KiSh01} holds for 
quasisuperminimizers as well. 
This then directly leads to the Liouville theorem
(under global assumptions) 
in the following form, as well as to
the strong minimum principle
(whose proof holds also under our general local assumptions,
see \cite[Section~10]{BBsemilocal}). 

\begin{thm} \label{thm-Liouville-global}
\textup{(The Liouville theorem under global assumptions,
\cite{KiMa03}, \cite{KiSh01})}
Assume that $\mu$ is globally doubling and supports a global \p-Poincar\'e
inequality.
Then every  nonnegative quasisuperharmonic function in $X$ is constant.
\end{thm}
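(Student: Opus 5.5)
The plan is to derive the theorem from the weak Harnack inequality for quasisuperminimizers (Kinnunen--Shanmugalingam~\cite{KiSh01}, extended to quasisuperminimizers by Kinnunen--Martio~\cite{KiMa03}). The key point is that, under \emph{global} doubling and a \emph{global} \p-Poincar\'e inequality, its constants are independent of the radius. The version I will use reads as follows. There are $s>0$ and $C\ge1$, depending only on $p$, $Q$, the doubling constant and the Poincar\'e data, such that for every ball $B=B(x,R)$ and every nonnegative $Q$-quasisuperminimizer $w$ in $2\la B$,
\[
   \biggl(\vint_{B} w^s\,d\mu\biggr)^{1/s} \le C \essliminf_{B} w ,
\]
where the right-hand side is understood as the essential infimum over $B$.

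First I reduce to bounded functions. Let $u\ge0$ be quasisuperharmonic in $X$. If $u$ is nonconstant, then by \eqref{eq-essliminf-reg} it is not a.e.\ constant, so $\min\{u,k\}$ is nonconstant for some $k\in\R$. This truncation is again nonnegative and $Q$-quasisuperharmonic, because $\min\{\min\{u,k\},k'\}$ is a truncation of $u$ and the $\essliminf$-regularization is preserved. So it suffices to treat $0\le u\le k$. Next I normalize: set $m=\inf_X u$ and $v=u-m\ge0$, which is still quasisuperharmonic with $\inf_X v=0$. Fix $x_0$ and write $m_R=\essliminf_{B(x_0,R)} v$ for the essential infimum over $B(x_0,R)$. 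Then $m_R$ is nonincreasing in $R$ and $m_R\to0$ as $R\to\infty$.

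The heart of the argument is a density (``measure-to-pointwise'') form of weak Harnack, applied on balls centred at points where $v$ is large. Suppose $v$ is not a.e.\ zero. Then there are $t>0$ and a ball $B_0=B(x_1,r_0)$ with $\mu(\{v>t\}\cap B_0)\ge\tfrac12\mu(B_0)$; this uses Lebesgue differentiation, since $v>t$ on a set of positive measure. Weak Harnack on $B_0$ then gives $\essliminf_{B_0} v\ge c\,t$. To push this lower bound out to balls of arbitrarily large radius with no loss, I will combine two facts. The first is the doubling property. The second is that the infimum over a larger ball controls the infimum over a smaller one through a chain of $N$ comparable balls, and $N$ must stay bounded independently of the scale for this to work. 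What I actually plan to use is the scale-invariant Harnack-chain estimate for nonnegative quasisuperminimizers: if $w\ge a$ on a subset of $B(x,r)$ of relative measure at least $\tfrac12$, then
\[
 \essliminf_{B(x,2r)} w\ge c\,a
\]
with $c$ independent of $r$. Since $v\ge ct$ on all of $B_0$, this applies to $v$ on $B(x_1,2^jr_0)$. The relative measure of $B_0$ inside $B(x_1,2^jr_0)$ degenerates, though, so the real task is to show that it decays no faster than the doubling dimension allows. That would yield
\[
 m_R\ge c\,t\,(r_0/R)^{\beta}, \qquad \beta=\beta(C_\mu,Q,p)>0 .
\]

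The main obstacle is that this decaying lower bound does not by itself contradict $m_R\to0$. To close the argument I plan to avoid decaying estimates entirely and argue instead with the full-density version of weak Harnack. Since $v\ge0$ and $\inf v=0$, weak Harnack on $B(x_0,R)$ gives
\[
 \vint_{B(x_0,R)} v^s\,d\mu\le C^s m_R^s\to0 ,
\]
so $\{v>t\}$ has vanishing density in large balls. Then I apply weak Harnack to the nonnegative quasisuperminimizer $\min\{v,t\}$, centred at points of $B_0$ and at radius comparable to $d(x_0,x_1)+r_0$, in order to transfer the bound $v\ge ct$ on $B_0$ to an essential-infimum bound on $B(x_0,R)$ that is uniform in $R$. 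If that transfer succeeds, $m_R\ge c't$ for all large $R$, which contradicts $m_R\to0$. Hence $v=0$ a.e., and therefore $u\equiv m$ by \eqref{eq-essliminf-reg}. I expect this uniform transfer to be the delicate step. It is exactly where global, rather than local, constants are indispensable, consistent with the failure under local assumptions noted in \cite[Example~10.3]{BBsemilocal}.
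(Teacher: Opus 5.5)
\textbf{Gap.} The step that fails is exactly the one you flag as delicate, and it cannot be repaired. The weak Harnack inequality for nonnegative quasisuperminimizers only bounds averages \emph{from above} by the essential infimum. Any attempt to push the bound $v\ge ct$ on $B_0$ out to $B(x_0,R)$ pays the density factor $\mu(B_0)/\mu(B(x_0,R))$. This holds whether you chain balls or apply weak Harnack to $\min\{v,t\}$, and it is why you only reach $m_R\ge ct(r_0/R)^\beta$. Applying weak Harnack at a radius comparable to $d(x_0,x_1)+r_0$ gives a lower bound on a ball of \emph{fixed} size. It gives nothing on $B(x_0,R)$ uniformly in $R$.

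\textbf{Counterexample.} The loss is genuine. Take $X=\R^n$ with Lebesgue measure and $1<p<n$, which is globally doubling and supports a global $p$-Poincar\'e inequality, and let
\[
  u(x)=\min\bigl\{1,|x|^{(p-n)/(p-1)}\bigr\}.
\]
This is the minimum of the constant $1$ and (a multiple of) the fundamental solution of the $p$-Laplacian. Hence it is $p$-superharmonic, i.e.\ $1$-quasisuperharmonic. It is nonnegative and nonconstant, and $m_R=R^{(p-n)/(p-1)}$ for $R\ge1$, so it decays exactly like your power bound. Thus the statement, read literally for quasisuperharmonic functions, fails in $p$-hyperbolic spaces even under global assumptions. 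This agrees with the paper's own remark in the introduction that positive global Green functions are superharmonic in $p$-hyperbolic spaces. It also agrees with Theorem~\ref{thm-Liouville-para}, where $p$-parabolicity is precisely what makes the quasisuperharmonic case work. No argument can close your final step.

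\textbf{What the weak Harnack route does prove.} The route the paper points to (weak Harnack with global constants, after Kinnunen--Shanmugalingam and Kinnunen--Martio) gives the Liouville theorem for nonnegative \emph{quasiharmonic} functions. There you combine weak Harnack with the local $\sup$-estimate for quasisubminimizers. This yields the full Harnack inequality $\sup_{B(x_0,R)} w\le C\inf_{B(x_0,R)} w$ with $C$ independent of $R$; the global constants are needed exactly here. Applied to $w=u-\inf_X u$, for fixed $r$ and all $R\ge r$ it gives
\[
  \sup_{B(x_0,r)} w\le \sup_{B(x_0,R)} w\le C\inf_{B(x_0,R)} w\to 0
  \quad\text{as } R\to\infty,
\]
so $u$ is constant. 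Your preliminary reductions (truncation, normalization, $m_R\to0$, vanishing density of $\{v>t\}$) are correct. What is missing is upper control of $v$ on large balls, which the supersolution property alone cannot supply.
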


In contrast to the proof of
Theorem~\ref{thm-Liouville-global} in~\cite{KiSh01}
(which relies on the weak Harnack inequality),
our proof of Theorem~\ref{thm-Liouville-para}
is completely elementary and only relies on the
definition of \p-parabolicity and 
properties of 
quasisuperharmonic functions.
We will use the 
following lemma, which will also be used 
to prove Theorems~\ref{thm-main-char-intro} 
and~\ref{thm-LL-F}.
We say that an unbounded open set $\Om$ is a \emph{\p-parabolic set} if 
there are $x_0 \in X$ and $\eta_j \in \Lip_c(X)$, $j=1,2,\dots$\,,
such that $0\le \eta_j \le1$ everywhere,
$\eta_j=1$ in $B(x_0,j)$ and 
\begin{equation}   \label{eq-par-set-int}
\int_\Om g_{\eta_j}^p \,d\mu \to 0, \quad \text{as } j\to\infty.
\end{equation}
An unbounded open set is a  \emph{\p-hyperbolic set
if it is not a \p-parabolic set.}
It is easily verified that $X$ is \p-parabolic in the sense 
of Definition~\ref{def-p-par} if and only if it is a \p-parabolic set.

\begin{lem}   \label{lem-quasi-Liouville-Om}
Assume that $\bdy\Om = \{x_0\}$ and that $\Om$ is either
bounded or a \p-parabolic set.
Let $u$ be a 
quasisuperharmonic function in $\Om$ that is bounded from below,
and let
\begin{equation}   \label{eq-def-u-x0}
u(x_0)=\liminf_{\Om\ni y\to x_0} u(y).
\end{equation}
Then $u(x_0)<\infty$ and $u\ge u(x_0)$ in $\Om$.

In particular, if $u$ is a bounded quasiharmonic function in $\Om$, such that 
\begin{equation}   \label{eq-lim-u-x0}
\lim_{\Om\ni y\to x_0} u(y)\quad \text{exists},
\end{equation}
then $u$ is constant.
\end{lem}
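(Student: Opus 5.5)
We first reduce to $u\ge0$ by adding a constant; this does not affect quasisuperharmonicity or the claimed inequality. Write $m:=u(x_0)\in[0,\infty]$. The idea is to compare $u$ with truncations $v:=\min\{u,k\}$ for real $k<m$, and to show $v\equiv k$ by a Caccioppoli-type estimate whose right-hand side is killed by parabolicity (or is trivially zero when $\Om$ is bounded).

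Fix $0\le k<m$. By the definition of $m$ as a $\liminf$, there is $r>0$ with $u>k$ in $\Om\cap B(x_0,r)$, so $v=k$ there. Let $\eta_j$ be as in~\eqref{eq-par-set-int}. In the bounded case we instead take $\eta\equiv1$, restricted to a bounded neighbourhood, so that $g_\eta=0$ on $\Om$. We use the test function
\[
\phi=\eta_j^p(k-v)\ge0 .
\]
It vanishes in $\Om\cap B(x_0,r)$, it has bounded support, and it is bounded because $0\le k-v\le k$. Since $\bdy\Om=\{x_0\}$, extending $\phi$ by zero outside $\Om$ gives a function in $\Np(X)$, so $\phi\in\Np_0(\Om)$. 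As $v$ is a $Q$-quasisuperminimizer, the standard Caccioppoli argument applies: write $v+\phi=(1-\eta_j^p)v+\eta_j^pk$, estimate $g_{v+\phi}$ pointwise, and absorb the $g_v$-term using Young's inequality and $p>1$, as in the usual proof for quasi(super)minimizers. This gives
\[
\int_\Om \eta_j^p g_v^p\,d\mu \le C\int_\Om (k-v)^p g_{\eta_j}^p\,d\mu \le Ck^p\int_\Om g_{\eta_j}^p\,d\mu ,
\]
with $C$ depending only on $p$ and $Q$. Letting $j\to\infty$ (or directly in the bounded case) we get $g_v=0$ a.e.\ in $\Om$. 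I expect this to be the main technical step: one must check that the Caccioppoli estimate really holds for quasisuperminimizers with this one-sided test function, and verify carefully that $\phi\in\Np_0(\Om)$ near $x_0$.

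Next we show $v\equiv k$. Since $X$ is connected and $x_0\notin\Om$, every component $\Om'$ of $\Om$ has nonempty boundary contained in $\{x_0\}$, so $x_0\in\bdy\Om'$. On $\Om'$, the Poincar\'e inequality within balls together with $g_v=0$ shows that $v$ is a.e.\ constant on each small ball; connectedness then makes $v$ a.e.\ constant on $\Om'$. Because $v=k$ on $\Om'\cap B(x_0,r)\ne\emptyset$, we get $v=k$ a.e.\ in $\Om$, i.e.\ $u\ge k$ a.e. The $\essliminf$-regularity~\eqref{eq-essliminf-reg} then upgrades this to $u\ge k$ everywhere in $\Om$. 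If $m=\infty$, this holds for every $k$, so $u\equiv\infty$, contradicting the definition of quasisuperharmonicity; hence $m<\infty$. Letting $k\nearrow m$ gives $u\ge u(x_0)$ in $\Om$.

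For the final statement, let $u$ be a bounded quasiharmonic function in $\Om$ for which the limit~\eqref{eq-lim-u-x0} exists, and call it $L$. Then both $u$ and $-u$ are quasisuperharmonic and bounded from below. Applying the first part to each gives $u\ge L$ and $-u\ge -L$ in $\Om$, so $u\equiv L$.
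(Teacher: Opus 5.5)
The gap is at the step you flagged yourself, and as you describe it the step fails whenever $Q>1$. From $v+\phi=(1-\eta_j^p)v+\eta_j^pk$ you get $g_{v+\phi}\le(1-\eta_j^p)g_v+p\eta_j^{p-1}(k-v)g_{\eta_j}$. After Young's inequality (or convexity of $t\mapsto t^p$), the right-hand side of the quasisuperminimizing inequality therefore contains a term like $Q\int_{\{\phi\ne0\}}(1-\eta_j^p)g_v^p\,d\mu$. This term can be absorbed into the left-hand side $\int_{\{\phi\ne0\}}g_v^p\,d\mu$ only where $Q(1-\eta_j^p)<1$. That condition fails on the part of $\{\phi\ne0\}$ where $\eta_j$ is small.

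The Young-absorption argument you have in mind works for weak supersolutions of an Euler--Lagrange equation, or for $Q=1$, where convexity gives $\int\eta_j^pg_v^p\,d\mu\le p^p\int(k-v)^pg_{\eta_j}^p\,d\mu$. Quasiminimizers in metric spaces have no equation, and their Caccioppoli inequality is proved by Widman's hole-filling plus an iteration lemma. Since $\eta_j$ is an arbitrary Lipschitz function rather than a radial cut-off, you would have to run the hole-filling over its level sets, using $\psi_{s,t}=\min\{1,(\eta_j-s)_+/(t-s)\}$ for $0<s<t\le1$. This yields, e.g., $\int_{\{\eta_j\ge1/2\}}g_v^p\,d\mu\le C(p,Q)k^p\int_\Om g_{\eta_j}^p\,d\mu$. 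That would suffice, but it is real work that your proposal does not contain.

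There is a much shorter repair, which is what the paper does: test with $\phi=(k\eta_j-v)_+$ instead. (The paper normalizes to $k=1$ and uses $v=\min\{u+\de,1\}$, $\phi=(\eta-v)_+$.) This $\phi$ is still nonnegative and bounded, and it vanishes in $\Om\cap B(x_0,r)$ because $v=k\ge k\eta_j$ there, so $\phi\in\Np_0(\Om)$. Moreover $v+\phi=\max\{v,k\eta_j\}=k\eta_j$ on $\{\phi\ne0\}$. The quasisuperminimizing inequality therefore gives directly $\int_{\{\phi\ne0\}}g_v^p\,d\mu\le Qk^p\int_\Om g_{\eta_j}^p\,d\mu$, with no absorption at all. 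Since $\{\phi\ne0\}\supset\{\eta_j=1\}\cap\{v<k\}$ and $g_v=0$ a.e.\ on $\{v=k\}$, letting $j\to\infty$ gives $g_v=0$ a.e.\ in $\Om$.

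With this change the rest of your argument is correct. Your component argument (each component of $\Om$ has $x_0$ as a boundary point, so it meets $B(x_0,r)$, where $v=k$) is a valid substitute for the paper's extension of $v$ by a constant outside $\Om$ followed by the Poincar\'e inequality on growing balls. Your use of the $\essliminf$-regularization, the exclusion of $u(x_0)=\infty$, and the $\pm u$ argument for the second statement match the paper.
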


Example~\ref{ex-circle-new} with $K=\{-1\}$ 
shows that the limit \eqref{eq-lim-u-x0}
need not exist even for 
bounded \p-harmonic functions $u$ when $\Om$ is bounded.
Example~\ref{ex-factorial2} shows the same both when
$\Om$ is bounded and a \p-parabolic set, depending on the choices in the construction.

\begin{proof}
Assume to start with that $u(x_0)<\infty$.
By replacing $u$ with a positive multiple of $u+1-\inf_{\Om} u$,
we can assume that $u>0$ and that $u(x_0)=1$. 
Let $0 < \de <1$ and  $v=\min\{u+\de,1\}\le1$,  which is a 
bounded quasisuperharmonic function and thus also a quasisuperminimizer.
Moreover, by \eqref{eq-def-u-x0}, there is $\rho>0$ such that $v=1$ 
in $\clOm \cap B(x_0,\rho)$.
(Note that $\clOm = \Om\cup\{x_0\}$.)

Let $0 < \eps< 1$  and $r=1/\eps $.
Since $\Om$ is bounded or  a \p-parabolic set, there are $R>r$
and $\eta\in \Lip_c(B(x_0,R))$ such that $0\le \eta\le1$ everywhere,
$\eta=1$ in $B(x_0,r)$ and 
\begin{equation*} 
\int_\Om g_\eta^p \,d\mu < \eps.
\end{equation*}

Let  $\phi = (\eta-v)_+$.
Then $\phi=0$ in $\clOm \cap B(x_0,\rho)$
and in $\Om\setm B(x_0,R)$. 
As $\bdy \Om =\{x_0\}$ and $v \in \Nploc(\Om)$, we conclude that 
$\phi \in \Np_0(\Om)$ 
and $0 \le \phi\le 1$. 
Note that
\[
v+\phi = \eta \quad \text{whenever } \phi\ne0.
\]
Inserting this into 
Definition~\ref{deff-quasisuperharm} of quasisuperminimizers 
(with $Q$ being the quasisuperminimizing constant of $v$),
we get
\[
\int_{\phi\ne0} g_v^p \,d\mu \le Q \int_{\phi\ne0} g_{v+\phi}^p \,d\mu 
= Q \int_{\phi\ne0} g_\eta^p \,d\mu< Q\eps.
\]
Since $\phi(x)>0$ for all $x\in \Om \cap B(x_0,r)$ with $v(x)<1$, 
we therefore conclude that
\[
\int_{\Om\cap B(x_0,r)} g_{(1-v)_+}^p \,d\mu 
\le \int_{\phi\ne0} g_v^p \,d\mu
< Q\eps.
\]
Letting $\eps\to0$ (and thus $r\to\infty$) implies that 
$g_{(1-v)_+}=0$ a.e.\ in $\Om$.

Since $(1-v)_+=0$ in  $\clOm \cap B(x_0,\rho)$,
extending $v$ by $v=1$ in  $X \setm \Om$, together with 
a successive use of the \p-Poincar\'e inequality on larger
and larger balls, 
implies that $(1-v)_+=0$ (that is  $v\ge 1$) a.e.\ in  $X$.
Hence, by 
the $\essliminf$-regularity \eqref{eq-essliminf-reg},
\[
  u(x)+\de \ge v(x)\ge 1 = u(x_0) \quad \text{for every } x \in \Om.
\]
Letting $\de \to 0$ proves the first statement when $u(x_0)<\infty$.

Assume now 
that $u(x_0)=\infty$.
Let $u_k:=\min\{u,k\}$, where $k \in \R$.
Then the above applies to $u_k$ and thus
$u_k(x) \ge u_k(x_0) =k$ in $\Om$. 
Letting $k \to \infty$ shows that 
$u \equiv \infty$ in $\Om$.
But this is impossible as $u$ is quasisuperharmonic.
Hence $u(x_0)<\infty$.

The second statement follows by applying the first part to  both $u$ and $-u$.
\end{proof}

\begin{proof}[Proof of Theorem~\ref{thm-Liouville-para}]
Let $\{x_j\}_{j=1}^\infty$ be a sequence such that 
\begin{equation} \label{eq-L-para} 
u(x_j)\to \sup_X u,
\quad \text{as } j\to\infty.
\end{equation}
Since $u$ is $\essliminf$-regularized
as in~\eqref{eq-essliminf-reg}, we see that
$u(x_j)=\liminf_{\Om\ni y\to x_j} u(y)$.
Lemma~\ref{lem-quasi-Liouville-Om} implies that
$u  \ge u(x_j)$ in $X$ for $j=1,2,\dots$\,.
Together with~\eqref{eq-L-para} this shows that $u \equiv \sup_X u$.
\end{proof}

The following observation is rather trivial, but will
be useful to refer to in some of our proofs.

\begin{lem} \label{lem-Liouville=>connected}
If the Liouville theorem holds for bounded $Q$-quasiharmonic functions in 
a nonempty open set $G$, then $G$ is connected.
\end{lem}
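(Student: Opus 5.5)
The plan is to argue by contraposition: if $G$ is disconnected, we exhibit a bounded nonconstant $Q$-quasiharmonic function in $G$. Suppose $G=G_1\cup G_2$, where $G_1$ and $G_2$ are nonempty, disjoint and open. One could take $G_1$ to be a single component of $G$ and $G_2$ its complement in $G$. Since $X$ is locally connected (see the beginning of Section~\ref{sect-harm}), components of open sets are open, so $G_2$ is also open.

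Define $u:=\chi_{G_1}$ on $G$. We need to check that $u$ is $Q$-quasiharmonic in $G$, and the key steps are as follows.
\begin{enumerate}
\item The function $u$ is continuous on $G$, because it is locally constant.
\item We have $u\in\Nploc(G)$ with $g_u=0$ a.e. Indeed, every point of $G$ has a small ball contained in $G_1$ or in $G_2$, and on that ball $u$ is constant, so the zero function is an upper gradient there.
\item For every $\phi\in\Np_0(G)$, the left-hand side $\int_{\phi\ne0} g_u^p\,d\mu$ vanishes. The right-hand side of the quasiminimizing inequality is nonnegative, so $u$ is a $Q$-quasiminimizer for every $Q\ge1$. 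In fact $u$ is \p-harmonic.
\end{enumerate}

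Then $u$ is bounded (it takes values in $\{0,1\}$) and $Q$-quasiharmonic in $G$, but it is nonconstant because both $G_1$ and $G_2$ are nonempty. This contradicts the Liouville theorem in $G$, so $G$ must be connected.

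The only point needing any care is the local Newtonian membership in the second step. Because $\Nploc(G)$ is defined ballwise, it suffices to observe that constant functions belong to $\Np(B)$ for bounded balls $B$, since $\mu(B)<\infty$. No step here is a real obstacle.
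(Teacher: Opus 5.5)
Your proposal is correct and is essentially the paper's proof: the paper takes a component $G'$ of $G$, observes that $\chi_{G'}$ is a bounded \p-harmonic (hence $Q$-quasiharmonic) function in $G$, and concludes from the Liouville theorem that it is constant, so $G=G'$. You additionally verify continuity, membership in $\Nploc(G)$ with $g_u=0$ a.e., and the quasiminimizing inequality, which the paper leaves implicit.
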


\begin{proof}
Let $G'$ be a component of $G$, then $\chi_{G'}$ is a bounded
\p-harmonic function in $G$ which is constant by the Liouville theorem.
Hence $G=G'$ is connected.
\end{proof}

\begin{proof}[Proof of Theorem~\ref{thm-main-char-intro}]
\ref{f-Li}\imp\ref{f-rem}
This is trivial: Since every function under consideration is constant,
one can just extend it as a constant function.

\ref{f-rem}\imp\ref{f-weak}
This implication is also trivial.

\ref{f-weak}\imp\ref{f-char}
Properties \ref{rs-par}--\ref{rs1+2} follow directly from 
Theorem~\ref{thm-cpt-pharm-nec}
since $\bdy G = \bdy\Om \cup \bdy K$
and $\Cp(\bdy\Om)=0$ by Theorem~\ref{thm-cpt-pharm-nec}\ref{r-1}.
As for \ref{rs-lim}, the limit~\eqref{eq-lim-ex} exists because the quasiharmonic
extension is continuous at $x_0$.

\ref{f-char}\imp\ref{f-Li}
Let $u$ be a bounded $Q$-quasiharmonic function in $G$.
Lemma~\ref{lem-G_0-open} and \ref{rs1+2} imply
that the set $G_0:=\clG\setm\{x_0\}$ is open.
By \ref{rs1+2} and Theorem~\ref{thm-removability-qharm}, $u$ has
a bounded $Q$-quasiharmonic extension $\ut$ to $G_0$.
Since 
$\ut$ is continuous in $G_0\subset \clG$, it follows from \ref{rs-lim} that
\[
   \lim_{G_0 \ni y \to x_0} \ut(y)
   =\lim_{G \ni y \to x_0} u(y)
\quad \text{exists.}
\]
As $\bdy G_0=\{x_0\}$, it now follows from 
Lemma~\ref{lem-quasi-Liouville-Om} and \ref{rs-par}
that $\ut$, and thus $u$,  is constant.
Hence \ref{f-Li} holds.
Moreover, $G$ is connected by Lemma~\ref{lem-Liouville=>connected}.
\end{proof}

\begin{proof}[Proof of Corollary~\ref{cor-remove-K-from-X-hyp}]
If $\Cp(K)=0$, then $K$ is removable by Theorem~\ref{thm-removability-qharm}.
Conversely, assume that $\Cp(K)>0$.
Then, by assumption, either \ref{rs-par} 
in Theorem~\ref{thm-main-char-intro}
fails (if $X$ is \p-hyperbolic)
or \ref{rs1+2} 
in Theorem~\ref{thm-main-char-intro}
fails (by Lemma~\ref{lem-Cp-connected}\ref{it-Cp}
if either $\Cp(X\setm \Om)>0$ or $\Cp(\{x\})=0$ for every $x \in \bdy K$).
Hence $K$ is not removable by 
Theorem~\ref{thm-main-char-intro}.
\end{proof}

\section{The Liouville theorem in open subsets
and  Theorem~\ref{thm-LL-F}}
\label{sect-Liouville-G}

\emph{Recall the general assumptions from the beginning of Sections~\ref{sect-ug}
and~\ref{sect-harm}.}

\medskip

The following observation is an easy consequence
of the extension Theorem~\ref{thm-removability-qharm}.
It can be conveniently combined with
Theorem~\ref{thm-Liouville-para} or~\ref{thm-Liouville-global}.

\begin{prop} \label{prop-L-CpF=0-new}
Assume that $G \subset X$ is open and that  $\Cp(X \setm G)=0$.
Then the Liouville theorem for bounded $Q$-quasiharmonic functions 
holds in $G$ if and only if it holds on $X$. 
\end{prop}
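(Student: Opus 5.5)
The plan is to use the extension Theorem~\ref{thm-removability-qharm} with $\Om=X$ and $E=F:=X\setm G$. Since $\Cp(F)=0$, the set $F$ has empty interior. Indeed, every nonempty open set contains a ball, balls have positive measure, and a set of zero capacity has zero measure. Hence $F$ is a closed subset of $X$ with empty interior, and $G$ is dense in $X$. We may also assume that $G$ is nonempty, which is automatic because $\Cp(X)>0$: the space $X$ contains balls of positive measure.

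Suppose first that the Liouville theorem holds in $G$, and let $u$ be a bounded $Q$-quasiharmonic function on $X$. Then $u|_G$ is a bounded $Q$-quasiharmonic function in $G$. This needs only that the restriction of a quasiminimizer to an open subset is again a quasiminimizer, which holds because $\Np_0(G)\subset\Np_0(X)$ with test functions vanishing outside $G$. By hypothesis $u|_G\equiv c$. As $u$ is continuous and $G$ is dense in $X$, we get $u\equiv c$ on $X$.

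Conversely, suppose the Liouville theorem holds on $X$, and let $u$ be a bounded $Q$-quasiharmonic function in $G$. By Theorem~\ref{thm-removability-qharm} (with $\Om=X$ and $E=F$, which is relatively closed with $\Cp(E)=0$), $u$ has a bounded $Q$-quasiharmonic extension $\ut$ to $X$. By the Liouville theorem on $X$, $\ut$ is constant, and hence so is $u=\ut|_G$.

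There is no real obstacle. The only points needing care are that restriction preserves $Q$-quasiharmonicity, and that the density of $G$ is what allows constancy on $G$ to transfer to $X$. Both are routine. Note also that connectedness of $G$, which the Liouville theorem in $G$ implicitly requires, comes for free here from Lemma~\ref{lem-Cp-connected}\ref{it-conn} applied with $\Om=X$, although the argument above does not need it.
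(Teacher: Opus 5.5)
Your proposal is correct and follows essentially the same route as the paper: you extend via Theorem~\ref{thm-removability-qharm} (with $\Om=X$) for one direction, and for the other you use continuity together with the density of $G$, which holds because $\Cp(X\setm G)=0$ forces $X\setm G$ to have empty interior. You also spell out a few routine details the paper leaves implicit: the restriction of a $Q$-quasiharmonic function to $G$ stays $Q$-quasiharmonic, $\Cp(F)=0$ implies $\mu(F)=0$, and $G\ne\emptyset$.
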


\begin{proof}
If $u$ is a bounded $Q$-quasiharmonic function in $G$,
then by Theorem~\ref{thm-removability-qharm}
 it has a bounded $Q$-quasiharmonic  extension to $X$,
which must be constant if the  Liouville theorem holds on $X$.
This proves one implication.

Conversely, the  Liouville theorem for $G$ implies that bounded 
$Q$-quasiharmonic functions in $X$ have to be constant 
in $G$ and hence in $X$, by continuity and the fact that 
$G$ has empty exterior (because $\Cp(X\setm G)=0$).
\end{proof}

However, there are situations when the Liouville theorem holds also for
complements of closed sets with positive \p-capacity, even in \p-hyperbolic spaces,
see Example~\ref{ex-two-R} (with a \p-hyperbolic $X$), 
Example~\ref{ex-bow-ties}(a) (with a \p-parabolic $X$)
and Example~\ref{ex-bow-ties}(b) (with a bounded $X$).

To prove the necessity in Theorem~\ref{thm-LL-F}, we will use the 
following lemma.
Typical situations are 
bow-ties (see Example~\ref{ex-bow-ties}), metric trees and metric graphs.

\begin{lem} \label{lem-Y-PI}
Assume that $G$ is an open connected set with 
$\Cp(\bdy G \setm \{x_0\})=0$, where $x_0 \in \bdy G$.
Then $Y:=\clG$, equipped with the metric $d$ and the 
measure $\mu_Y:=\mu|_Y$, is a proper connected metric space such that 
$\mu_Y$ is a locally doubling measure supporting a local \p-Poincar\'e 
inequality on $Y$.

Moreover, $\CpY(\{x_0\})=0$ if and only if $\Cp(\{x_0\})=0$, 
and this is only possible if $Y=X$.
\end{lem}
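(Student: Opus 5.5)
The plan rests on one observation: $Y=\clG$ differs from $X$ only by a ``thin'' set near each point other than $x_0$. By Lemma~\ref{lem-G_0-open}, $G_0:=Y\setm\{x_0\}$ is open in $X$. Moreover $Y\setm G\subset\bdy G$, and $\Cp(\bdy G\setm\{x_0\})=0$.

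\emph{Basic properties of $Y$.} Properness is immediate, since $Y$ is closed in the proper space $X$. Connectedness follows because $G$ is connected and $G\subset Y\subset\clG$. For local doubling and the local Poincar\'e inequality, I will treat points $y\in G_0$ and the point $x_0$ separately.

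\emph{Points $y\in G_0$.} Since $G_0$ is open, there is a ball $B(y,r)\subset G_0$, where $r$ is small enough that the Poincar\'e inequality and doubling hold within $B(y,r)$ in $X$. Then for balls $B$ with $\la B\subset B(y,r)$, the balls in $Y$ coincide with those in $X$. Functions on $\la B\cap Y=\la B$ and their \p-weak upper gradients are the same in both spaces, because curves in $\la B$ lie in $Y$. Hence doubling and the Poincar\'e inequality transfer directly, after possibly shrinking the radius to absorb the dilation $\la$.

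\emph{The point $x_0$.} This is the main obstacle: near $x_0$, the set $Y$ may be a genuinely proper subset of $X$ (think of a bow-tie). The key fact is still $\mu(X\setm Y)\cap B(x_0,r)$ small? No — rather, I plan to show that $\mu(B\setm Y)=0$ fails in general, so instead I will use that $X\setm Y$ is open and separated from $G$ only through $x_0$ plus a capacity-zero set. More precisely, $\bdy(X\setm Y)\subset \bdy G$, so $\bdy(X\setm Y)\setm\{x_0\}$ has zero capacity.

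I would then try the following. For a ball $B=B_X(x_0,r)$ with $r$ small and a function $u$ on $\la B\cap Y$ with upper gradient $g$, extend $u$ to $\la B$ by the constant $u(x_0)$ (or by a suitable value) on $X\setm Y$, and extend $g$ by $0$ there. Since $\bdy(X\setm Y)\setm\{x_0\}$ has zero capacity, \p-a.e.\ curve avoids it, so \p-a.e.\ curve crossing between $Y$ and $X\setm Y$ passes through $x_0$. The main issue is making this extension have $g$ as a \p-weak upper gradient, which requires continuity of $u$ at $x_0$ along curves. To avoid this, I would instead prove the Poincar\'e inequality via the equivalent pointwise or truncation characterizations; alternatively, I would apply the Poincar\'e inequality in $X$ separately on each piece.

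My first attempt would be the cleaner route: show that $Y$ is locally quasiconvex near $x_0$ and that $\mu_Y$ is doubling there. Doubling follows by comparing $\mu(B_Y(x_0,2r))\le\mu(B_X(x_0,2r))\le C\mu(B_X(x_0,r))$ with a lower bound $\mu(B_Y(x_0,r))\ge c\mu(B_X(x_0,r))$. That lower bound is itself unclear in general, so I may need to argue via balls centred at points of $G$ that are contained in $G_0$. Having doubling in hand, I would verify the Poincar\'e inequality through the modulus and curve-connection characterization.

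\emph{Capacity statement.} Suppose $\Cp(\{x_0\})=0$. Then $\Cp(\bdy G)=0$, so by Lemma~\ref{lem-Cp-connected}\ref{it-Cp}, applied with $\Om=X$ and $E=X\setm G$, we get $\Cp(X\setm G)=0$. This set then has empty interior, so $Y=X$ and $\CpY=\Cp$. Conversely, suppose $\CpY(\{x_0\})=0$. Then in $Y$ the boundary of $G$ has zero capacity. By Lemma~\ref{lem-Cp-connected}, used in $Y$ (valid by the first part), we get $\CpY(Y\setm G)=0$. It remains to transfer this to $\Cp(\{x_0\})=0$. My plan is to take functions $u_j\in\Np(Y)$ with $u_j(x_0)=1$ and small norm, and extend them to $X$ by $0$ on $X\setm Y$ after multiplying by a cutoff. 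The extension keeps a small norm because the only junction point is $x_0$, modulo zero capacity. Making that precise reuses the same gluing argument as above, and I expect it to be the delicate point.
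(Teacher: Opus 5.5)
\textbf{There are genuine gaps.} Your handling of properness, connectedness and the points of $G_0$ is fine. The implication $\Cp(\{x_0\})=0\Rightarrow Y=X$, via Lemma~\ref{lem-Cp-connected}\ref{it-Cp} with $E=X\setm G$, is also fine. But both substantive parts are left unproved.

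\emph{Doubling and the Poincar\'e inequality near $x_0$.} You establish neither. You also miss a simplification you already have: since $G_0$ is open and $Y=G_0\cup\{x_0\}$, we have $\bdy Y=\{x_0\}$ exactly. So there is no capacity-zero exceptional set to handle, and one may replace $G$ by $G_0$.

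The missing key step is a ball-inclusion property. Let $L$ be the quasiconvexity constant of $X$ on $B(x_0,r_0)$. Then $B(y,r)\subset Y$ whenever $y\in Y$, $Lr\le d(y,x_0)$ and $B(y,r)\subset B(x_0,r_0)$. Indeed, an $L$-quasiconvex curve from $y$ to a point outside $Y$ must pass through $x_0$, but it is too short to do so. Apply this to $B(x,r/2L)$ when $r\le 2d(x,x_0)$. When $r\ge 2d(x,x_0)$, apply it to $B(y,r/4L)$ for some $y\in Y$ with $d(y,x_0)=r/4$, which exists by connectedness of $Y$. This yields $\mu_Y(B^Y(x,r))\simeq\mu(B(x,r))$ for all small balls near $x_0$, and hence local doubling. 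Note that this covers balls not centred at $x_0$, whereas you only consider balls centred at $x_0$.

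With this comparison, the extension you proposed and then abandoned does work. For bounded $u$, set $u=u(x_0)$ and $g=0$ on $\la B\setm Y$. No continuity at $x_0$ is needed. A curve from $Y$ to $X\setm Y$ has a last point in $Y$, which must be $x_0$, and the upper gradient inequality holds on that subcurve for \p-a.e.\ curve. The Poincar\'e inequality in $X$, combined with the measure comparison, then gives the inequality for $\mu_Y$. The alternatives you mention (pointwise or truncation characterizations, a modulus characterization) are not arguments, and the last one presupposes the doubling you have not proved.

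\emph{The converse $\CpY(\{x_0\})=0\Rightarrow\Cp(\{x_0\})=0$.} Your appeal to Lemma~\ref{lem-Cp-connected} in $Y$ yields nothing. Since $G$ is open and dense in $Y$, the set $Y\setm G$ is exactly the boundary of $G$ in $Y$, so $\CpY(Y\setm G)=0$ is just your hypothesis restated.

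The gluing step then fails as stated. You have $u_j(x_0)=1$ while the extension vanishes on $X\setm Y$. So curves in $(X\setm Y)\cup\{x_0\}$ through $x_0$ violate the upper gradient inequality, unless you already know they form a \p-exceptional family in $X$. That is essentially what is to be proved.

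The paper avoids gluing entirely:
\begin{enumerate}
\item Since $\CpY(\{x_0\})=0$, \p-a.e.\ curve in $Y$ misses $x_0$.
\item After replacing $G$ by $G_0$ we have $\bdy G=\{x_0\}$. Hence every curve joining a point of $G$ to a point of $X\setm G$ contains a subcurve in $Y$ ending at $x_0$.
\item Therefore $g\equiv0$ is a \p-weak upper gradient of $\chi_G$ in $X$.
\item The local Poincar\'e inequality then forces $\chi_G$ to be a.e.\ constant in $B(x_0,r_0)$, so $B(x_0,r_0)\subset Y$.
\item Hence $\bdy Y=\emptyset$, and $Y=X$ by connectedness of $X$, whence $\CpY=\Cp$.
\end{enumerate}
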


We denote balls in $Y$ by 
$B^Y=B^Y(x,r)= \{y \in Y : d(x,y)<r\}$, where 
$x\in Y$ and $r>0$,
and the Sobolev capacity in $Y$ by $\CpY$.
In this proof, we will write $a \simle b$ and $b \simge a$ 
if there is an implicit comparison constant $C>0$ such that $a \le Cb$, 
and $a \simeq b$ if $a \simle b \simle a$.
The implicit comparison constants are allowed to depend on the 
fixed data.

\begin{proof}
By Lemma~\ref{lem-G_0-open}, the set 
$G_0:=\clG\setm\{x_0\}$ is open.
As $G$ is connected so is $G_0$.
We can therefore replace $G$ by $G_0$, 
or in other terms assume that $\bdy G=\{x_0\}$,  in the rest of the proof.

It is clear that
$Y$ is proper and connected and that $0< \mu_Y(B^Y)< \infty$ 
for all balls $B^Y$ in $Y$.
The local doubling and local \p-Poincar\'e inequality for $\mu_Y$ 
near
an arbitrary point  $x \in G$ follow from the corresponding property
for $\mu$ on $X$.
It remains to show these properties near $x_0$.

Let $r_0\le \min\{1,\tfrac13 \diam Y\}$ be so small that
$\mu$ is doubling and supports a \p-Poincar\'e inequality
within $B(x_0,5r_0)$.
Then there is a quasiconvexity constant  $L \ge 1$
such that any pair of points $x,y\in B(x_0,r_0)$ can be connected by 
an $L$-quasiconvex curve
in $X$ of length $\ell \le Ld(x,y)$,
cf.\ Lemma~4.9 in Bj\"orn--Bj\"orn~\cite{BBsemilocal}.

First, we show that
\begin{equation} \label{eq-ball-in-Y}
    B(y,r) \subset Y
\quad \text{if } y \in Y,  \ Lr \le d(y,x_0)
\text{ and } B(y,r) \subset B(x_0,r_0).
\end{equation}
Indeed, if $z \in B(y,r) \setm Y$, then there is an 
$L$-quasiconvex curve from $y$ to  $z$
in $X$ of length $\ell \le Ld(y,z)$. 
As $\bdy G=\{x_0\}$ this curve must pass through $x_0$.
Thus
\[
d(y,x_0) \le \ell \le L d(y,z) < L r \le d(y,x_0),
\]
which is a contradiction.
Hence no such $z$ exists and $B(y,r) \subset Y$.

In order to prove that $\mu_Y$ is doubling within $B^Y(x_0,r_0)$
consider a ball $B^Y(x,r) \subset B^Y(x_0,r_0)$.
As $Y$ is connected and $Y \setm B(x_0,r_0) \ne \emptyset$ 
(since $\diam Y \ge 3r_0$), we see that $r \le 2r_0$ and 
thus $B(x,r) \subset B(x_0,5r_0)$. 
We shall show that 
\begin{equation} \label{eq-muY-B}
\mu_Y(B^Y(x,r)) \simeq \mu(B(x,r)).
\end{equation}
The $\le$ inequality is trivial.
For the $\simge$ inequality, we consider two cases.

\emph{Case}~$1$. $r \le 2d(x,x_0)$.
In this case it follows from \eqref{eq-ball-in-Y}
and the doubling property of $\mu$ within $B(x_0,5r_0)$
that
\begin{equation*} 
\mu(B(x,r)) \simle  \mu(B(x,r/2L))  
 =  \mu_Y(B^Y(x,r/2L))    \le \mu_Y(B^Y(x,r)).
\end{equation*} 

\emph{Case}~$2$. $r \ge 2d(x,x_0)$.
By the connectedness of $Y$ (and the fact that $\diam Y \ge 3r_0 > r$), there
is $y \in Y$ such that $d(y,x_0)=\tfrac14 r$.
Then $d(x,y) \le \tfrac34 r$ and hence another
use of~\eqref{eq-ball-in-Y}, together with the doubling 
property of $\mu$ within $B(x_0,5r_0)$, gives that
\begin{equation*} 
\mu(B(x,r)) 
\le \mu(B(y,2r)) 
\simeq \mu(B(y,r/4L))  
= \mu_Y(B^Y(y,r/4L)) 
  \le  \mu_Y(B^Y(x,r)).
\end{equation*}

Thus in both cases, \eqref{eq-muY-B} holds.
Since $\mu$ is doubling within $B(x_0,5r_0)$, it directly follows
that $\mu_Y$ is doubling within $B^Y(x_0,r_0)$.

Now we turn to the \p-Poincar\'e inequality near $x_0$.
Let $\la$ be the dilation constant in the \p-Poincar\'e inequality
for  $\mu$ within $B(x_0,5r_0)$. 
Consider a ball $B^Y=B^Y(x,r) \subset B^Y(x_0,r_0)$ and let  $B=B(x,r)$. 
It was shown above that $B(x,r) \subset B(x_0,5r_0)$.
Let $u$ be an integrable function in $\la B^Y$ and 
$g$ be a \p-weak upper gradient of $u$ in $\la B^Y$.
Approximating $u$ by its truncations $\max\{\min\{u,k\},-k\}$ 
at levels $\pm k$ and using 
dominated convergence shows that it is enough
to prove the Poincar\'e inequality for bounded $u$
(cf.~\cite[Proof of Proposition~4.13]{BBbook}).
Letting $u=u(x_0)$ and $g=0$ in $\la B \setm Y$
we see that $g$ is a \p-weak upper gradient of $u$ in $\la B$.

Since $\mu$ supports a \p-Poincar\'e inequality within $B(x_0,5r_0)$,
we conclude, using \eqref{eq-muY-B}, that
\begin{equation*} 
        \vint_{B^Y} |u-u_{B}| \,d\mu_Y
       \simle        \vint_{B} |u-u_B| \,d\mu 
        \simle  r \biggl( \vint_{\lambda B} g^{p} \,d\mu \biggr)^{1/p}
        \le  r \biggl(  \vint_{\lambda B^Y} g^{p} \,d\mu_Y \biggr)^{1/p}.
\end{equation*}
A standard argument using the triangle inequality
allows us to replace $u_{B}$ in the left-hand side by $u_{B^Y}$
at the cost of an extra factor~2 in the right-hand side.

For the last part, the inequality $\CpY(\{x_0\}) \le \Cp(\{x_0\})$
is obvious.
Conversely, assume that $\CpY(\{x_0\}) = 0$.
Recall that we assume that $\bdy G = \bdy Y= \{x_0\}$, see the beginning of the proof.
Since $\CpY(\{x_0\})=0$, there are \p-almost no curves in $Y=\clG$ 
hitting $x_0$ (see \cite[Proposition~1.48]{BBbook}).
Thus $g \equiv 0$ is a \p-weak upper gradient of 
$\chi_G$ 
in $X$.
The local \p-Poincar\'e inequality on $B(x_0,r_0)$ then implies that $\chi_G$
is a.e.\ constant in $B(x_0,r_0)$ and thus
$B(x_0,r_0)\subset Y$. 
Using that $\bdy Y=\{x_0\}$ and by the connectedness of $X$, we must have $Y=X$.
In particular, $\CpY=\Cp$.
\end{proof}

\begin{proof}[Proof of Theorem~\ref{thm-LL-F}]
The sufficiency is shown as in the proof of 
Theorem~\ref{thm-main-char-intro} \ref{f-char}\imp\ref{f-Li}.
For the necessity, assume
that the Liouville theorem holds for bounded
$Q$-quasiharmonic functions in $G$.
Note first that
$\Cp(\bdy G)>0$, by Lemma~\ref{lem-Cp-connected}\ref{it-Cp}, 
and thus $\bdy G \ne \emptyset$.

\ref{Lio-cap}
Assume, for a contradiction, that $\Cp(\bdy G \setm \{x\})>0$
for every $x\in \bdy G$.
By the Kellogg property (Theorem~\ref{thm-Kellogg}), 
there is a regular boundary point
$x_0 \in \bdy G$ for~$G$.
As $\Cp(\bdy G \setm \{x_0\})>0$ there is, again by the Kellogg property,
another regular boundary point $x_1 \in \bdy G \setm \{x_0\}$ for~$G$.

Let  $f(x):=(1-d(x,x_1)/d(x_0,x_1))_+$ (with $f(\binfty)=0$).
Then $f\in \Lip_c(X)$ and the upper Perron solution
$u=\uP_{G} f$ is a bounded \p-harmonic function in $G$.
Since $x_0$ and $x_1$ are regular, we see that 
\[
\lim_{G\ni y\to x_j} u(y) = f(x_j)=j, 
\quad j=0,1.
\]
In particular, $u$ is nonconstant,
which contradicts the Liouville theorem.
Thus \ref{Lio-cap} holds.

\ref{Lio-lim}
By the assumed Liouville theorem, $u$ is constant and thus the limit exists.

\ref{Lio-par}
Assume, for a contradiction, that $G$ is  
neither bounded nor a \p-parabolic set.
By the already proved part~\ref{Lio-cap}, we know that 
$\Cp(\bdy G \setm \{x_0\})=0$ for some $x_0\in \bdy G$.
In particular, $\Cp(\{x_0\})>0$ since $\Cp(\bdy G)>0$.
As $G$ is connected 
(by Lemma~\ref{lem-Liouville=>connected}),
Lemma~\ref{lem-Y-PI} implies  that
$Y:=\clG$ is a proper connected metric space (with metric $d$) and that 
$\mu_Y:=\mu|_Y$ is a locally doubling measure supporting a local \p-Poincar\'e 
inequality on $Y$, and moreover that $\CpY(\{x_0\})>0$.
Since $G$ is unbounded and not a \p-parabolic set, it
follows directly from~\eqref{eq-par-set-int} that $Y$ is \p-hyperbolic.
Hence, by Theorem~1.1 in Bj\"orn--Bj\"orn~\cite{BBglobal} applied to $Y$,
there is a  Green function $v$ in $Y$ with singularity at $x_0$.
In particular, $v$ is a nonconstant
positive bounded \p-harmonic function in $G\subset Y\setm\{x_0\}$
such that
\[
v(x_0) = \lim_{G \ni y\to x_0}v(y) = \max_{Y} v < \infty
\quad \text{and} \quad
\inf_Y v=0.
\]
(The boundedness of $v$
follows from Lemma~9.5 in~\cite{BBglobal}
since $\CpY(\{x_0\})>0$.)
This contradicts the Liouville theorem. 
Hence, \ref{Lio-par} holds.
\end{proof}

\section{Local connectedness and 
Proposition~\ref{prop-not-locconn}}
\label{sect-loc-conn}

\emph{Recall the general assumptions from the beginning of Sections~\ref{sect-ug}
and~\ref{sect-harm}.}

\medskip

We shall now prove
Proposition~\ref{prop-not-locconn}, i.e.\ 
that local connectedness is required for removability.
First, recall the definition.

\begin{deff} \label{deff-lc}
A metric space $Y$ is \emph{locally connected at $x\in Y$}
if for every $r>0$ there is a connected neighbourhood
(not necessarily open) $V \subset B^Y(x,r)$ of $x$.

$Y$ is \emph{locally connected} if it is locally connected at
every point $x \in Y$.
\end{deff}

If $Y$ is locally connected then every component of an open set
in $Y$ is open.
As mentioned at the beginning of Section~\ref{sect-harm}, 
$X$ is  locally quasiconvex and in particular locally connected,
under our general assumptions.

For removability it is local connectedness at a boundary point
that is relevant. This is defined in the following way.

\begin{deff}\label{deff-loc-conn-bdy}
$\Om$ is \emph{locally connected at $x_0\in \bdy \Om$} if
for every $r>0$ there is an open set $V\subset B(x_0,r)$ 
such that $x_0 \in V$
and $V \cap \Om$ is connected.
\end{deff}

\begin{proof}[Proof of Proposition~\ref{prop-not-locconn}]
Since $G$ is not locally connected at $x_0$ there
is $r>0$ such that $U \cap G$ is disconnected for
every open set $U$ 
with $x_0 \in U \subset B:=B(x_0,r)$.
We shall
find two disjoint open sets $V_0$ and $V_1$ such 
that $V_0 \cup V_1 = B \cap G$ and $x_0\in \bdy V_0 \cap \bdy V_1$.
There are two possibilities:
Either 
(a) $B \cap G$ has a component $V_0$ such that $x_0 \in \bdy V_0$,
or (b) there is no such component.

In case (a), we let 
$V_1=(B\cap G) \setm V_0$, which is nonempty
by the choice of $r>0$.
Assume that $x_0 \notin \bdy V_1$.
Let $W$ be the component of $B(x_0,\dist(x_0,V_1))$ containing $x_0$.
(Recall that balls need not be connected.)
As $X$ is locally connected, $W$ is open and thus 
$V:=W \cup V_0$ would be
an open connected set such that 
$V \cap G=V_0$ is connected and  $x_0 \in V \subset B$,
contradicting the choice of $r>0$.
Therefore, $x_0 \in \bdy V_1$.

In case (b), $B \cap G$ must have infinitely many components 
$V'_0,V'_1,\dots$ (as $X$ is Lindel\"of, they  
can only be countably many).  
Since $x_0\in \bdy G$, we can find a subsequence $\{V'_{j_k}\}_{k=1}^\infty$
such that $\dist(x_0,V'_{j_k})\searrow0$ as $k\to\infty$. 
Let $V_0=\bigcup_{k=1}^\infty V'_{j_{2k}}$ and $V_1=(B \cap G)\setm V_0$.
Then $x_0 \in \bdy V_0\cap \bdy V_1$.

Note that for every open connected set $U\ni x_0$, 
\[
\Cp(U \setm V_0) \ge \Cp(U \cap V_1) >0,
\]
because $x_0\in \bdy V_1$ and thus $U \cap V_1$ is a nonempty
open set.
Letting $U$ be the connected component of $B(x_0,\rho)$, $\rho>0$,
that contains $x_0$,  we 
therefore conclude from Lemma~\ref{lem-Cp-connected} 
that 
\begin{equation} \label{eq-Cp-bdy-V0}
\Cp(B(x_0,\rho)\cap \bdy V_0)>0 \quad  \text{for all } \rho>0.
\end{equation}

Next let 
\[
   \phi(x)=\begin{cases}
   (1-d(x,x_0)/r), & \text{if } x \in V_1,\\
   0, & \text{if } x \in G \setm V_1.
   \end{cases}
\]
Then $\phi \in C(G)$ with $g_\phi\le (1/r) \chi_{B(x_0,r)}$
and hence $\phi \in \Np(G)$ and $H_G \phi \in \Dp(G)$.
Note that the Sobolev solution $H_G \phi$ 
(defined in Definition~\ref{def-harm-ext}) exists because 
$\Cp(X \setm G)\ge \Cp(B \cap \bdy V_0)>0$,
by~\eqref{eq-Cp-bdy-V0}. 

We shall now localize $H_G \phi$ 
and show that it has different limits towards $x_0$
from $V_0$ and $V_1$. 
Let $\eta \in \Lipc(B)$ be a Lipschitz cut-off function such that
$\eta=1$ in $\tfrac12 B$ and $0 \le \eta \le 1$ everywhere.
Then $\eta (\phi- H_G \phi) \in \Np_0(B \cap G)$ and in particular,
$\eta (\phi- H_G \phi) \in \Np_0(V_j)$, $j=0,1$, since there are no curves 
between $V_0$ and $V_1$. 
Hence, $H_G\phi\in\Np(V_j)$ is the Sobolev solution of the
Dirichlet problem in $V_j$ with boundary values
\[
f:= H_G \phi + \eta (\phi- H_G \phi)
   = \eta \phi + (1-\eta) H_G \phi,
\]
that is
\begin{equation} \label{eq-HG=HVj}
H_G \phi    =  H_{V_j} f \quad \text{in } V_j,  \quad j=0,1.
\end{equation}
Since $\eta=1$ in $\tfrac12 B$ and $0\le H_G\phi\le 1$, we have
\begin{equation}   \label{eq-bound-f} 
0\le f \le 1-\eta
\le \frac{2d_{x_0}}{r} \quad \text{in } V_0
\qquad \text{and} \qquad 
1\ge f \ge \eta\phi \ge 1- \frac{2d_{x_0}}{r} \quad \text{in } V_1,
\end{equation}
where $d_{x_0}(x) := d(x,x_0)$.

The Kellogg property (Theorem~\ref{thm-Kellogg}) 
and \eqref{eq-Cp-bdy-V0}
provide  us with (not necessarily distinct) regular boundary
points $y_k \in B(x_0,2^{-k})\cap \bdy V_0$.
Then there are $x_k\in B(y_k,2^{-k})\cap V_0$ such that
\begin{equation}   \label{eq-choose-xj}
0\le P_{V_0} d_{x_0} (x_k) \le d_{x_0}(y_k) + 2^{-k}
\le 2^{1-k}.
\end{equation}
Since $H_{V_0} d_{x_0}=P_{V_0} d_{x_0}$ (by 
Theorem~6.1
in Bj\"orn--Bj\"orn--Shan\-mu\-ga\-lin\-gam~\cite{BBS2} or
\cite[Theorem~10.15]{BBbook}),
the comparison principle for Sobolev $H$-solutions
\cite[Lemma~8.32]{BBbook},
together with \eqref{eq-HG=HVj}--\eqref{eq-choose-xj},  then implies
that
\[
\liminf_{V_0\ni y\to x_0} H_G \phi (y)
= \liminf_{V_0\ni y\to x_0} H_{V_0} f (y)
\le\lim_{k\to\infty} H_{V_0} f(x_k) 
\le \lim_{k\to\infty} \frac{2 P_{V_0} d_{x_0} (x_k)}{r} =0.
\]
A similar argument for $V_1$ gives that 
\[
\limsup_{V_1\ni y\to x_0} H_G \phi (y)=1,
\]
which shows that the bounded \p-harmonic function $H_G \phi$ in $G$
does not have any continuous extension to $x_0$. 
The nonremovability conclusion then follows since a quasiharmonic extension
would have to be continuous at $x_0$.
\end{proof}

\section{Sequential annular quasiconvexity and 
\texorpdfstring{\eqref{eq-seq=>lim=>loc-conn}}{(1.2)}} 
\label{sect-annular}

\emph{Recall the general assumptions from the beginning of Sections~\ref{sect-ug}
and~\ref{sect-harm}.}

\medskip

Annular quasiconvexity of the space $X$ is an important
condition in metric space analysis,
see e.g.\ Korte~\cite[Theorem~3.3]{Korte07}.
In this section, we show that 
annular quasiconvexity at a boundary point $x_0 \in \bdy V$ of an open set $V$ is
a useful condition for removability results.
We do not know if such a condition has been used earlier
to study removable sets.

\begin{deff}\label{def:ann-qcvx}
An open set $V$ is \emph{sequentially  annularly quasiconvex at $x_0\in \bdy V$}
if there is $\La >1$ 
and a sequence of radii $r_k \searrow 0$, $k=0,1,2,\dots$\,,
such that the following conditions hold:
\begin{enumerate}
\renewcommand{\theenumi}{\textup{(\roman{enumi})}}%
\item
$\bdy V\cap B(x_0,2\La r_0) = \{x_0\}$,
\item
for every $k=0,1,2,\dots$\,,    
each pair of points $x,y  \in V\cap \bdy B(x_0,r_k)$
can be connected by a curve 
\[
\ga \subset V\cap (B(x_0,\La r_k)\setm B(x_0,r_k/\La))
\quad \text{with length } \ell_\ga \le \La d(x,y).
\]
\end{enumerate}
It is convenient to extend this notion to $x_0\in V$ 
in the following way:
We say that $V$  is 
\emph{sequentially annularly quasiconvex at $x_0\in V$} 
if $V \setm \{x_0\}$ is so.
\end{deff} 

In addition to the obvious situation $V=\R^n\setm \{x_0\}$, $n \ge 2$,
and manifolds as in Corollary~\ref{cor-mfld},
a typical situation of sequential  annular quasiconvexity is when
$X$ is a bow-tie as in Example~\ref{ex-bow-ties}(a)--(c),
$V=([0,\infty)^2 \cap X) \setm\{0\}$ and $x_0=0$.

Sequential annular quasiconvexity makes it possible to prove the following
result for limits of quasiharmonic
functions, cf.\ \eqref{eq-seq=>lim=>loc-conn}.

\begin{lem} \label{lem-lim-ann-quasiconvex}
If $V$ is open and sequentially 
annularly quasiconvex at $x_0\in \bdy V$, then 
\[
   \lim_{V \ni y \to x_0} u(y) \quad \text{exists}
\]
for every  bounded quasiharmonic function $u$ in $V$.
\end{lem}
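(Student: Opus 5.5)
We will show that the oscillation of $u$ over $V\cap B(x_0,\rho)$ tends to $0$ as $\rho\to0$; this gives existence of the limit. Let $u$ be bounded and $Q$-quasiharmonic in $V$, with $|u|\le M$, and let $\La$ and $r_k\searrow0$ be as in Definition~\ref{def:ann-qcvx}. By condition (i), $V\cap B(x_0,2\La r_0)=B(x_0,2\La r_0)\setm\{x_0\}$ up to the exterior part; more precisely, each component of $B(x_0,2\La r_0)\setm\{x_0\}$ meeting $V$ is contained in $V$, and $\bdy V$ near $x_0$ is only $\{x_0\}$.

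\emph{Step 1: the spheres.} Set $S_k:=V\cap\bdy B(x_0,r_k)$ and $A_k:=V\cap (B(x_0,\La r_k)\setm \clB(x_0,r_k/\La))$. We aim for a uniform oscillation bound on $S_k$, namely
\[
\operatorname{osc}_{S_k} u \le \omega_k \quad\text{with } \omega_k\to0 .
\]
The curves from (ii) have length $\le 2\La^2 r_k$ and lie in $A_k$, whose distance to $\bdy V=\{x_0\}$ (locally) is comparable to $r_k$. The plan is to cover each curve by $N$ balls of radius $c r_k/\La$ inside $V$, where $N$ depends only on $\La$. Within each ball we apply the Harnack/Hölder estimates of Kinnunen--Shanmugalingam~\cite{KiSh01}, which are valid under our local assumptions in the bounded region $B(x_0,2\La r_0)$.

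\emph{Step 2: a Harnack chain on the annuli.} Consider $M_k:=\sup_{S_k}u$ and $m_k:=\inf_{S_k}u$. The nonnegative quasiharmonic functions $M-u$ and $u+M$ satisfy the Harnack inequality in each ball of the chain. Chaining gives the following for the whole annulus $A_k$ (which contains the curves): if $v\ge0$ is quasiharmonic in a neighbourhood of the chain, then $\sup v\le C\inf v$ along the curve, with $C$ independent of $k$.

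\emph{Step 3: iteration via the maximum principle.} The set $U_k:=V\cap B(x_0,r_k)$ has $\bdy U_k\subset S_k\cup\{x_0\}$. Since $\Cp(\{x_0\})$ may be positive, we use the comparison principle for quasiminimizers only on $U_k\setm \clB(x_0,r_{j})$ for $j>k$. Then $\operatorname{osc}_{U_k\setm B(x_0,r_j)}u\le\max\{\operatorname{osc}_{S_k}u,\operatorname{osc}_{S_j}u, M_k-m_j,\dots\}$. Here we combine the spheres: by the maximum principle on annular regions $U_k\setm\clB(x_0,r_j)$, $\sup_{U_k}u\le\max\{M_k,\limsup_j M_j\}$ and similarly for inf. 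The Harnack chain applied to $M_k-u$ (positive on $U_k$ near $S_{k+1}$) gives $M_k-M_{k+1}\ge c(M_k-m_{k+1})$, a standard oscillation decay $\operatorname{osc}_{U_{k+1}}u\le\theta\operatorname{osc}_{U_k}u$ with $\theta<1$ independent of $k$. Hence $\operatorname{osc}_{U_k}u\le 2M\theta^k\to0$, and the limit exists.

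\emph{Main obstacle.} The delicate point is Step 3. We cannot apply the maximum principle directly on $U_k$, because $x_0$ may carry positive capacity and $u$ is not controlled there. This forces the two-sided sandwich between $S_k$ and $S_j$, followed by letting $j\to\infty$ using only $\limsup/\liminf$ along the spheres. A second issue is that the Harnack chain must stay inside $V$ with balls of size $\simeq r_k$ (so that $\la$-dilated balls avoid $x_0$). This is where $\Lambda$-quasiconvexity within the annulus, together with condition (i), is used, and it is also why the radius $2\La r_0$ appears.
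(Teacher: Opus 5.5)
Your ingredients are the right ones: a Harnack chain along the annular curves, and the maximum principle used only on annular regions $V\cap(B(x_0,r_k)\setminus\overline{B}(x_0,r_j))$ that avoid $x_0$. But Step~3 has a genuine gap, exactly at the point you flag as the main obstacle. Applying the Harnack chain on $S_{k+1}$ to $M_k-u$ requires $M_k-u\ge0$ near $S_{k+1}$, i.e.\ essentially $\sup_{U_k}u\le M_k$. The annular maximum principle only gives $\sup_{U_k}u\le\max\{M_k,\sup_{j>k}M_j\}$. This is not a technicality. Take the $p$-harmonic function $u(x)=-|x|^{(p-n)/(p-1)}$ in $V=B(0,1)\setminus\{0\}\subset\mathbf{R}^n$, $p>n\ge2$. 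There $m_k=M_k<M_{k+1}$, so $M_k-u<0$ on all of $U_k$, and your inequality $M_k-M_{k+1}\ge c(M_k-m_{k+1})$ is false for every $c<1$.

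Likewise, $\operatorname{osc}_{U_{k+1}}u\le\theta\operatorname{osc}_{U_k}u$ does not follow from a one-step estimate on $S_{k+1}$. The quantity $\operatorname{osc}_{U_{k+1}}u$ is governed by $\sup_{j>k}M_j-\inf_{j>k}m_j$, which involves all deeper spheres. Small oscillation on each single sphere does not rule out the values drifting from sphere to sphere. Controlling that drift is the whole content of the lemma, so the iteration as written is circular. Step~1 cannot produce $\omega_k\to0$ on its own either, since Harnack/H\"older estimates on balls of radius comparable to $r_k$ are scale invariant.

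The missing idea is to get convergence along the spheres first, and only then use Harnack.
\begin{itemize}
\item The maximum principle on $V\cap(B(x_0,r_{k-1})\setminus\overline{B}(x_0,r_{k+1}))$ gives $M_k\le\max\{M_{k-1},M_{k+1}\}$ and $m_k\ge\min\{m_{k-1},m_{k+1}\}$. So both sequences are eventually monotone, hence convergent.
\item Sandwiching $u$ between consecutive spheres then shows $\lim_k M_k=\limsup_{V\ni y\to x_0}u(y)$ and $\lim_k m_k=\liminf_{V\ni y\to x_0}u(y)=:m$.
\item Nonnegativity near $x_0$ now comes for free from the definition of $\liminf$: $u-(m-\varepsilon)\ge0$ in $V\cap B(x_0,r_k)$ for large $k$.
\item The Harnack inequality on restricted spheres gives $M_k-(m-\varepsilon)\le C(m_k-(m-\varepsilon))$. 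Letting $k\to\infty$ and then $\varepsilon\to0$ yields $\limsup\le\liminf$.
\end{itemize}
This is the paper's argument.

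Your oscillation scheme can be salvaged the same way. Use $\bar M_k:=\sup_{U_k}u$ and $\bar m_k:=\inf_{U_k}u$ instead of $M_k,m_k$, and thin the radii so that $2\Lambda r_{k+1}\le r_k$. Then you get $M_{k+1}-m_{k+1}\le\theta(\bar M_k-\bar m_k)$. Closing the argument still needs the convergence of $M_k$ and $m_k$ above, so that both sides tend to $\limsup u-\liminf u$.

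Finally, keeping the chain balls inside $V$ uses the local quasiconvexity of $X$, not the annular quasiconvexity, because balls need not be connected. A point of $X\setminus V$ close to the curve would be joined to it by a short quasiconvex curve, which must pass through $x_0$; this contradicts $d(\gamma(t),x_0)\ge r_k/\Lambda$.
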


To prove Lemma~\ref{lem-lim-ann-quasiconvex} we will
use the following Harnack inequality on spheres
(restricted to $V$)
whose proof is based on the sequential annular quasiconvexity and
the Harnack inequality from
Kinnunen--Shan\-mu\-ga\-lin\-gam~\cite{KiSh01}.

\begin{lem} \label{lem-Harnack-on-spheres}
\textup{(Harnack inequality on restricted spheres)}
Assume that $V$ is open and
sequentially annularly quasiconvex at $x_0$, 
with parameters $\La$ and $r_k \searrow 0$.
Let  $u$ be a nonnegative $Q$-quasiharmonic function in $V$.
Then for every $k=0,1,\dots$\,,
\begin{equation*} 
M(r_k) \le C m(r_k),
\end{equation*}
where 
\begin{equation}   \label{eq-def-m-M}
M(r):= \sup_{V \cap \bdy B(x_0,r)} u
\qquad \text{and} \qquad 
m(r) := \inf_{V \cap\bdy B(x_0,r)} u,
\end{equation}
and the constant $C$ depends on $Q$
but is independent of $u$ and $k$.
\end{lem}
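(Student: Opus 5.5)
The plan is a chaining argument with the local Harnack inequality of Kinnunen--Shan\-mu\-ga\-lin\-gam~\cite{KiSh01}, applied along the quasiconvex curves given by Definition~\ref{def:ann-qcvx}. Fix $k$ and two points $x,y\in V\cap\bdy B(x_0,r_k)$. By condition (ii) there is a curve $\ga$ joining them, with
\[
\ga\subset V\cap \bigl(B(x_0,\La r_k)\setm B(x_0,r_k/\La)\bigr)
\quad\text{and}\quad \ell_\ga\le \La d(x,y)\le 2\La r_k .
\]
Condition (i), with $r_k\le r_0$, gives $\bdy V\cap B(x_0,2\La r_0)=\{x_0\}$. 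Every point $z\in\ga$ satisfies $d(z,x_0)\ge r_k/\La$, and this will be used to show that small balls centred on $\ga$ stay inside $V$.

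\emph{Step 1 (balls along $\ga$ lie in $V$).} Let $z\in\ga$ and $s=r_k/(4\La)$. Then $B(z,s)\subset B(x_0,2\La r_k)$ and $x_0\notin B(z,s)$. Suppose $B(z,s)\not\subset V$. I would then join $z$ to a point outside $V$ by a quasiconvex curve, using local quasiconvexity with constant $L$ near $x_0$ as in the proof of Lemma~\ref{lem-Y-PI}. This curve would meet $\bdy V$ at a point different from $x_0$, which contradicts (i). To make this work, $s$ has to be shrunk to $r_k/(c\La L)$, so that the quasiconvex curve stays inside $B(x_0,2\La r_0)\setm\{x_0\}$. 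This is the same argument as for~\eqref{eq-ball-in-Y}. Here I assume $r_0$ is chosen small enough that $\mu$ is doubling, supports a Poincar\'e inequality and $X$ is $L$-quasiconvex within $B(x_0,5\La r_0)$. This costs nothing, since discarding finitely many $r_k$ and relabelling keeps (i) and (ii) valid.

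\emph{Step 2 (Harnack chain).} Cover $\ga$ by balls $B_i=B(z_i,s)$, $i=1,\dots,N$, with centres $z_i\in\ga$ spaced at arclength at most $s/2$. Then consecutive balls intersect, and
\[
N\le 1+\frac{2\ell_\ga}{s}\le 1+ c\La^2 L ,
\]
a bound independent of $k$. Since $2\la B_i\subset V$ after a further fixed shrinking of $s$, the Harnack inequality for nonnegative $Q$-quasiharmonic functions applies on each $B_i$ and gives $\sup_{B_i}u\le C_H\inf_{B_i}u$. Here $C_H$ depends only on $Q$ and on the doubling and Poincar\'e constants within $B(x_0,5\La r_0)$; these are uniform because we work inside one fixed ball, where the assumptions hold semilocally. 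Chaining the inequality through the $N$ balls gives $u(x)\le C_H^{N}u(y)$. Taking the supremum over $x$ and the infimum over $y$ yields $M(r_k)\le C\,m(r_k)$ with $C=C_H^{1+c\La^2L}$, independent of $u$ and $k$. If $V\cap\bdy B(x_0,r_k)$ is empty, the statement is vacuous.

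\emph{Main obstacle.} The delicate point is Step 1, together with keeping all constants uniform in $k$. One needs the Harnack balls, including their dilations by the Harnack constant, to remain in $V$ at scale comparable to $r_k$. This must rely only on (i) and local quasiconvexity, and must not use any global geodesic structure. The case $x_0\in V$ is handled by the convention in Definition~\ref{def:ann-qcvx}: $V\setm\{x_0\}$ is then the relevant open set, and $u$ restricted to it is still quasiharmonic.
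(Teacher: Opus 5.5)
Your proposal is correct and follows essentially the same route as the paper: balls of radius comparable to $r_k/(\la\La L)$, centred on the annular curve, are shown to lie in $V$ by joining the centre to an outside point with an $L$-quasiconvex curve and invoking condition (i), and then the Kinnunen--Shanmugalingam Harnack inequality is chained through a number of balls bounded independently of $k$. The one superfluous step is shrinking $r_0$ by discarding finitely many $r_k$; under the standing assumptions, doubling, the Poincar\'e inequality and quasiconvexity hold within every ball, so the paper uses the constants within $B(x_0,2\La r_0)$ directly, which covers every $k=0,1,\dots$ rather than all but finitely many.
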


Note that  the sequential
annular quasiconvexity in 
Lemmas~\ref{lem-lim-ann-quasiconvex} and~\ref{lem-Harnack-on-spheres}
cannot be replaced by local connectedness,
see Example~\ref{ex-factorial2}.

\begin{proof}
Fix $k=0,1,2\dots$ and write $r=r_k$.
For every $x,y\in \bdy B(x_0,r)$, let $\ga$ be an arc length parameterized
curve of length $\ell_\ga \le 2\La r$, 
provided 
by the sequential annular quasiconvexity, so that 
\begin{equation}   \label{eq-pick-curve}
\ga(0)=x, \quad \ga(\ell_\ga)=y \quad \text{and} \quad
\ga \subset V \cap (B(x_0,\La r) \setm B(x_0,r/\La)).
\end{equation}
We shall show that $\ga$ can be covered by 
a chain with bounded length consisting of balls $B_j$ 
admitting the standard Harnack inequality
for quasiharmonic functions in~$V$.

Let $\la$ be the dilation constant in the \p-Poincar\'e inequality
within $B(x_0,2 \La r_0)$.
Lemma~4.9 in Bj\"orn--Bj\"orn~\cite{BBsemilocal} shows that
there is a quasiconvexity constant  $L \ge 1$
such that any pair of points $z,z'\in B(x_0,2 \La r_0)$ can be connected by a 
curve 
in $X$ of length $ \le Ld(z,z')$.
For $\rho=r/50\la \La L$  and integers 
$0\le j \le \ell_\ga/\rho \le 100\la \La^2 L$,
let $x_j=\ga(j\rho)$ and $B_j=B(x_j,\rho)$.
Then $50 \la B_j  \subset B(x_0,2 \La r_0)$.

We shall show that also $50 \la B_j  \subset V$.
Indeed, if $z\in 50 \la B_j \setm V$ then 
there is an $L$-quasiconvex curve $\ga_j$  
from $z$ to $x_j$ of length $\ell_{\ga_j}   \le Ld(z,x_j) < 50\la L\rho =r/\La$,
which must contain a point $z' \in \ga_j   \cap \bdy V$.
Hence, using \eqref{eq-pick-curve} and that $z',x_j\in\ga$,
we have
\[
   d(z',x_0) 
 \le d(z',x_j)+d(x_j,x_0) 
  < \ell_{\ga_j} +\La r
< 2 \La r.
\]
Thus $z' \in \bdy V \cap B(x_0,2\La r) = \{x_0\}$,
by assumption,  i.e.\ $z'=x_0$.
Therefore, another use of \eqref{eq-pick-curve} and $z',x_j\in\ga$
gives
\[
\frac{r}{\La} \le d(x_0,x_j)  
= d(z',x_j)
\le \ell_{\ga_j} < \frac{r}{\La},
\]
which is a contradiction. 

Thus, $50 \la B_j \subset V \cap B(x_0,2\La r)$ and the Harnack inequality 
(\cite[Corollary~7.3]{KiSh01} and \cite[p.~190]{BBsemilocal}) 
holds for $B_j$,
with constant $C'$ depending on $Q$, but independent of $j$.
(The formulation of the Harnack inequality in \cite{KiSh01} is missing 
a factor $\la$,
see \cite[Section~10]{BMarola} or \cite[Section~8.4]{BBbook}.)
Using that $B_j\cap B_{j+1}\ne\emptyset$ for all $j$,
we then have that
\[
\sup_{B_j} u \le C' \inf_{B_j} u \le C' \sup_{B_{j+1}} u,
\]
and hence by iteration, $u(x) \le C u(y)$,
where $C=(C')^{1+100\la \La^2 L}$.
Taking the supremum and the infimum over all $x\in \bdy B(x_0,r)$ and
all $y\in \bdy B(x_0,r)$, respectively, concludes the proof.
\end{proof}

\begin{proof}[Proof of Lemma~\ref{lem-lim-ann-quasiconvex}]
Let $\La$ and $r_k \searrow 0$ 
be the parameters in the sequential annular quasiconvexity condition.
Let $\eps>0$ and define
$m(r)$ and $M(r)$ as in~\eqref{eq-def-m-M}.
The strong maximum principle (Theorem~\ref{thm-max}) implies that 
the sequences $\{m(r_k)\}_{k=0}^\infty$ and $\{M(r_k)\}_{k=0}^\infty$
are monotone sequences, and that for each~$k$,
\[
\min\{m(r_k),m(r_{k+1})\} \le u \le \max\{M(r_k),M(r_{k+1})\}
\quad \text{in } V \cap (B(x_0,r_{k}) \setm B(x_0, r_{k+1})).
\]
Hence,
\[
m:= \liminf_{V \ni  y\to x_0} u(y) =\lim_{k \to \infty} m(r_k)
\quad \text{and} \quad
\limsup_{V \ni  y\to x_0} u(y) =\lim_{k \to \infty} M(r_k).
\]
Moreover,     $u\ge m-\eps$ in $V\cap B(x_0,r_k)$ 
for large enough $k$.
The Harnack inequality on restricted spheres (Lemma~\ref{lem-Harnack-on-spheres})
therefore implies that for all sufficiently large~$k$,
\[
M(r_k) - (m-\eps) \le C (m(r_k) -(m-\eps)).
\]
Letting $k\to \infty$ and then $\eps\to0$ yields
\[
\limsup_{V \ni  y\to x_0} u(y)=
\lim_{k\to\infty} M(r_k) 
\le  m + (C-1)\eps \to m
=\liminf_{V \ni  y\to x_0} u(y).
\qedhere
\]
\end{proof}

\begin{proof}[Proof of Corollary~\ref{cor-X-one-pt}]
If $\Cp(\{x_0\})=0$, then 
the removability follows from Theorem~\ref{thm-removability-qharm}.
If instead $\Cp(\{x_0\})>0$,
then the equivalence follows directly from 
Theorem~\ref{thm-main-char-intro} together with Lemma~\ref{lem-lim-ann-quasiconvex}.
\end{proof}

\begin{proof}[Proof of Corollary~\ref{cor-mfld}]
If $\Cp(K)=0$, then 
the removability follows from Theorem~\ref{thm-removability-qharm}.
On  the other hand, if $\Cp(K)>0$ then the equivalence
follows directly from  Theorem~\ref{thm-main-char-intro}
together with \eqref{eq-seq=>lim=>loc-conn}
(or Lemma~\ref{lem-lim-ann-quasiconvex}),
because $\Om \setm \{x_0\}$ is 
sequentially annularly connected at $x_0$.
\end{proof}

\begin{example} \label{ex-Rn-1/x-loc}
Let $w>0$ be a weight on $\R^n$, with $p=n\ge2$, such that
$w$ is continuous except at $0$ and 
$w(x)=1/|x|$ for $x$ sufficiently close to $0$.
Then $d\mu=w\,dx$ is a locally doubling measure
supporting a local $1$-Poincar\'e inequality, see
Hei\-no\-nen--Kil\-pe\-l\"ai\-nen--Martio~\cite[Section~1.6]{HeKiMa}.
Moreover, $\Cpmu(\{0\})>0$ while $\Cpmu(\{x\})=0$ for $x \ne 0$,
by Example~2.12 in~\cite{HeKiMa}. 
So there are plenty of compact sets $K \subset \Rn$
such that $\Cpmu(K)>0=\Cpmu(K \setm \{0\})$
for which Corollary~\ref{cor-mfld} is relevant.
\end{example}

\section{Examples of removable sets}
\label{sect-examples}

In this section, we give several examples
of unbounded closed removable sets with infinite measure.
In 
Example~\ref{ex-two-R}(a), (c)
the removable sets even separate the space.

\begin{example}  \label{ex-bow-ties}
(Bow-ties) 

(a) 
Let 
\[
   X=\Om=\{(x_1,x_2) \in \R^2 : x_1x_2 \ge 0\},
\]
which consists of the (closed) first and third quadrants.
We equip $X$ with the Euclidean metric and
the $2$-dimensional Lebesgue measure $\mu$ and let $p>2$.
Then $\mu$ is Ahlfors $2$-regular, and it follows
from Heinonen--Koskela~\cite[Theorem~6.15]{HeKo98}
(or \cite[Example~A.23]{BBbook})
that $\mu$ supports a global \p-Poincar\'e inequality on $X$.
Let $F=(-\infty,0]^2$ and $G=\Om \setm F = [0,\infty)^2 \setm\{(0,0)\}$.
Then $G$ is annularly quasiconvex at $(0,0)$ and $G$ is a \p-parabolic
set (as $\R^2$ is \p-parabolic).

It thus follows from  Lemma~\ref{lem-quasi-Liouville-Om}
(or Theorem~\ref{thm-LL-F})
together with Lemma~\ref{lem-lim-ann-quasiconvex} 
that any bounded
quasiharmonic function in $G$ is constant.
It can then be trivially extended into a (constant) \p-harmonic function 
in $X$.
So $F$ is removable for bounded $Q$-quasiharmonic functions.

Similar arguments apply in the following two situations:
\begin{enumerate}
\stepcounter{enumi}
\item (Bounded bow-tie) 
\[
   X=\Om=\{(x_1,x_2) \in [-1,1]^2 : x_1x_2 \ge 0\}
\quad \text{and} \quad K=[-1,0]^2.
\]
\item (Half-unbounded bow-tie) 
\[
X=\Om=\{(x_1,x_2) \in [-1,\infty)^2 : x_1x_2 \ge 0\}
 \text{ and } 
   K=[-1,0]^2 \text{ or }F= [0,\infty)^2.
\]
\end{enumerate}
\end{example}

The following example gives removability for closed sets
with positive capacity in cases when
there are bounded nonconstant \p-harmonic
functions that can be extended.
In particular, part~(a) shows that 
for a closed unbounded set $F$ removability
is not equivalent to the Liouville theorem on $X \setm F$,
cf.\ Theorems~\ref{thm-main-char-intro} and~\ref{thm-LL-F}.

\begin{example} \label{ex-hyp-R}
Let $X=\R$ be equipped with the weighted measure $d\mu=w\,dx$,
where $w(x)=\max\{1,|x|\}^p$.
Then $X$ is \p-hyperbolic and the two 
intervals $(-\infty,-1)$ and $(1,\infty)$ are \p-hyperbolic sets,
sometimes called \p-hyperbolic ends.

(a)
Let $G_1=(-1,1)$ and  $F_1=X \setm G_1$. 
Assume that $u$ is \p-harmonic in $G_1$. 
Then 
$u$ is an affine function
(and $u$ is bounded).
Without loss of generality we can assume that $u(x)=x$.
Define the extension $\ut$ to $X$ 
as an odd function with
\begin{equation*} 
\ut(x)= 1 + \int_1^x \frac{dt}{t^{p/(p-1)}} 
= p- \frac{p-1}{x^{1/(p-1)}}
\quad \text{for } x\in [1,\infty).
\end{equation*}
It is easily verified that $\ut$ is a bounded \p-harmonic function in $X$,
see Bj\"orn--Bj\"orn--Shan\-mu\-ga\-lin\-gam~\cite[Lemma~6.2]{BBSliouville}.
Hence 
$F_1$ is removable for bounded \p-harmonic functions in $G_1$.
Note that \ref{r-3} and \ref{r-5} in Theorem~\ref{thm-cpt-pharm-nec}
fail, so the compact set $K$ therein cannot be replaced by an unbounded closed set.

(b)
Let $F_2=F_1$ be as in (a) and $G_2=G_1 \setm \{0\}$.
Assume that $u$ is \p-harmonic in $G_2$. 
Then the restrictions $u|_{(-1,0)}$ and $u|_{(0,1)}$
are affine functions
(and $u$ is bounded).
Without loss of generality we can assume that
\[
     \lim_{G_2 \ni y \to \pm 1} u(y)=\pm 1.
\]
Define the extension $\ut$ to $\Om_2 := G_2 \cup F_2$
by 
\[
\ut(x)= \begin{cases}
 \displaystyle 
1+ a\biggl(1-\frac{1}{x^{1/(p-1)}}\biggr)
   & \text{for } x\in [1,\infty), \\
   \displaystyle 
-1+ b\biggl(1-\frac{1}{|x|^{1/(p-1)}}\biggr)
 & \text{for }  x\in (-\infty,-1].
\end{cases}
\]
By choosing $a$ and $b$ appropriately (so that $\ut'(\pm 1)$ exist)
$\ut$ becomes a bounded \p-harmonic function in $\Om_2$.
Hence 
$F_2$ is removable for bounded \p-harmonic functions in $G_2$.
Note that \ref{r-1}--\ref{r-2}, \ref{r-3} and \ref{r-5} 
in Theorem~\ref{thm-cpt-pharm-nec}
fail, so the compact set $K$ therein cannot be replaced by an unbounded closed set.
Property~\ref{r-3b} in Theorem~\ref{thm-cpt-pharm-nec}
always hold (also when $K$
is merely relatively closed in $\Om$), by 
Lemma~\ref{lem-Cp-connected}\ref{it-Cp}.
\end{example}

Since $X$ is \p-hyperbolic, neither in (a) nor in (b) of
Example~\ref{ex-hyp-R} do we know whether
$F_j$ is removable for bounded $Q$-quasiharmonic functions
in $G_j$, $j=1,2$.

The following example (parts~(a) and~(b)) has
removability for 
bounded quasiharmonic functions in a \p-hyperbolic space.
In parts~(a) and~(c) the removable unbounded set separates the
space $X=\Om$, while in part~(b) it instead has nonunique bounded extensions.
For bounded \p-harmonic functions there 
are similar examples (of removable sets that separate and/or have nonunique extensions)
in Examples~10.2 and~10.3 in Bj\"orn~\cite{ABremove}
(or \cite[Example~12.27 and 12.28]{BBbook}).
A novelty here is that the set is removable for 
bounded $Q$-quasiharmonic functions.

\begin{example} \label{ex-two-R}
Let $X$ consist of two copies of $\R$, identified at the interval $[-1,1]$
and equipped with the length metric,
i.e.\ $d(x,y)=\ell_{\ga}$, the length of the shortest curve $\ga$ from $x$ to $y$.

One copy of $\R$ is equipped with the Lebesgue measure, while the remaining 
intervals $(-\infty,-1)$ and $(1,\infty)$ are equipped with the measure 
$d\mu(x) = |x|^p\,dx$.
Then $X$ becomes \p-hyperbolic,
while the two unweighted 
intervals $(-\infty,-1)$ and $(1,\infty)$ are \p-parabolic sets,
sometimes called \p-parabolic ends.
Let $\Om=X$ and note that by e.g.\ Theorem~\ref{thm-main-char-intro}, 
the compact interval $K=[-1,1]$ is not removable.

(a)
Let $F_1$ be the weighted real line consisting of the two
2-hyperbolic intervals together with the common interval $[-1,1]$.
We shall now see that the unbounded set $F_1$ with infinite measure is removable 
for bounded $Q$-quasiharmonic functions in $G_1:=X\setm F_1$,
even though $F_1$ disconnects $X$.

Since $G_1$ consists of the two unweighted \p-parabolic intervals,
Theorem~\ref{thm-LL-F} shows that
the only bounded quasiharmonic functions in $G_1$ 
are constant on each of these two intervals, say (without loss of generality) 
$u=-1$ in $(-\infty,-1)$ and $u=1$ in $(1,\infty)$.
Define the extension to $X$ as $\ut(x)=x$ for $x\in [-1,1]$ and 
as an odd function with
\begin{equation}   \label{eq-ext-by-int}
\ut(x)= 1 + \int_1^x \frac{dt}{t^{p/(p-1)}} 
= p- \frac{p-1}{x^{1/(p-1)}}
\quad \text{for } x\in (1,\infty)
\end{equation}
in the rest of $F_1$.
It is easily verified that $\ut$ is a bounded \p-harmonic function in $X$,
see Bj\"orn--Bj\"orn--Shan\-mu\-ga\-lin\-gam~\cite[Lemma~6.2]{BBSliouville}.
Hence, every bounded quasiharmonic function in $G_1$ extends to a bounded
\p-harmonic function in $X$.
So $F_1$ is removable for bounded $Q$-quasiharmonic functions.

Note that \ref{r-2}, \ref{r-3} and \ref{r-5} in Theorem~\ref{thm-cpt-pharm-nec}
fail, so the compact set $K$ therein cannot be replaced by an unbounded closed set.

(b)
Let $G_2=(-\infty,-1)$ in the unweighted copy of $\R$ and let $F_2=X\setm G_2$.
Then every bounded quasiharmonic function $u$ in $G_2$ is constant 
and extends trivially as a constant \p-harmonic function to $X$.
So $F_2$ is removable 
for bounded $Q$-quasiharmonic functions in $G_2$.

In addition to this trivial extension, we can first extend $u$ by 
an arbitrary constant in 
$F_2\setm F_1 = (1,\infty)$, where $F_1$ is the weighted
line from  (a), so that $u$ becomes \p-harmonic in $X\setm F_1$.
Using part (a), $u$ can then be extended to a bounded \p-harmonic function in $X$.
This shows that the \p-harmonic extension of $u$ to $X$ is not unique and can be
nonconstant even when $u$ is constant.
See Section~10 in Bj\"orn~\cite{ABremove}
(or \cite[Section~12.4]{BBbook}) for other examples with 
nonunique removability for bounded \p-harmonic functions.

(c)
This time we let $F_3=F_1$ be as in (a) and $G_3=(-2,-1) \cup (1,2)$ 
in the unweighted real line.

Assume that $u$ is \p-harmonic in $G_3$. 
Then the restrictions
$u|_{(-2,-1)}$ and $u|_{(1,2)}$ are affine functions
(and $u$ is bounded).
Without loss of generality we can assume that
\[
     \lim_{G_3 \ni y \to \pm 1} u(y)=\pm 1.
\]
Define the extension to $\Om_3:=G_3 \cup F_3$
as $\ut(x)=x$ for $x\in [-1,1]$ and 
in the hyperbolic part using~\eqref{eq-ext-by-int} by 
\[
\ut(x)= \begin{cases}
 \displaystyle 
1+ a\biggl(1-\frac{1}{x^{1/(p-1)}}\biggr)
& \text{for } x\in (1,\infty), \\
   \displaystyle 
-1+ b\biggl(1-\frac{1}{|x|^{1/(p-1)}}\biggr)
 & \text{for }  x\in (-\infty,1).
\end{cases}
\]
By choosing $a$ and $b$ appropriately (so that the nonlinear
mean-value property holds
in the limit near $\pm 1$, cf.\ \cite[Theorem~A.26]{BBbook}
or \cite[(7.5)]{BBSliouville}), $\ut$ becomes \p-harmonic in $\Om_3$.
Note that $\ut$ is bounded in $\Om_3$.
Hence $F_3$ is removable for bounded \p-harmonic functions
in $G_3$. 
In this case, we do not know if 
$F_3$ is removable for bounded \emph{quasiharmonic} functions
in $G_3$.

Here $F_3$
is a closed subset of $X$ and
$G_3$ is disconnected.
Thus this example illustrates that if we replace the compact set $K$
in Theorem~\ref{thm-cpt-pharm-nec} by a closed (with respect to $X$) unbounded set,
then 
\ref{r-1}, \ref{r-2}, \ref{r-3} and \ref{r-5} in Theorem~\ref{thm-cpt-pharm-nec}
fail in this example.
\end{example}

\section{Locally connected examples
without annular quasiconvexity}
\label{sect-Janas-ex}

The real line $\R$ and bow-ties (cf.\ Example~\ref{ex-bow-ties})
are  examples of
spaces $X$ with a globally doubling measure supporting a global \p-Poincar\'e inequality
but which are not sequentially annularly quasiconvex at a point $x_0$.
However in both cases, $X$ is disconnected by $x_0$.
There are other examples of spaces without sequential annular quasiconvexity,
but we are not aware of any such examples where
$X \setm \{x_0\}$ is locally connected at $x_0$ and $\mu$ is doubling and supports 
a \p-Poincar\'e inequality.
Here we provide such an example, which also has some
interesting properties for \p-harmonic functions.

Below we will create a metric graph $X$. 
It is a particular case of a metric space satisfying our axioms.
However, one can also consider \p-harmonic functions on ``classical'' discrete graphs
as in Holopainen--Soardi~\cite{HoSo1}, \cite{HoSo2}.
Shan\-mu\-ga\-lin\-gam~\cite{Sh-conv} showed that there is a direct
correspondence between a \p-harmonic function on a discrete graph,
and the corresponding \p-harmonic function (extended linearly to the edges)
on the corresponding metric graph, cf.\  also \cite[Section~A.5]{BBbook}.
In Bj\"orn~\cite[Section~10]{ABremove} (and \cite[Section~12.4]{BBbook}), 
metric graphs were used to construct
examples of sets removable for bounded \p-harmonic function with nonunique removability, and also
such that $G$ is disconnected by $E$.

In contrast to the studies above, in the metric graphs $X$ that we construct below,
the edges will have different lengths (i.e.\ they are not all considered to be unit intervals).
This will be reflected in the \p-harmonicity condition at the nodes, see below.

We will need the following result.
For compact $X$ it is a special
case of Theorem~1.3 in Zhou~\cite{Xiaodan19}.
For the reader's convenience we include the short proof.

\begin{prop} \label{prop-PI-arc-length}
  Assume that $X$ is geodesic 
and equipped with the arc length\/ \textup(i.e.\ normalized 
$1$-dimensional  Hausdorff\/\textup) measure $\mu$.  
If there is a constant $C>0$ such that
$\mu(B(x,r)) \le Cr$ for all balls $B(x,r)$,
then $\mu$ is Ahlfors $1$-regular and 
supports a global $1$-Poincar\'e inequality with dilation constant $\la=1$.
\end{prop}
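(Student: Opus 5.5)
The plan has two parts: Ahlfors regularity first, then the Poincaré inequality by integrating along a single geodesic.

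\emph{Ahlfors $1$-regularity.} The upper bound $\mu(B(x,r))\le Cr$ is assumed. For the lower bound, take $x\in X$ and $0<r<2\diam X$. Pick $y\in X$ with $d(x,y)\ge \min\{r/4,\diam X/4\}\ge r/8$; such a point exists because $\diam X > r/2$. A geodesic $\ga$ from $x$ to $y$ then contains an initial subarc of length at least $r/8$ inside $B(x,r)$. Along a geodesic, arc length equals distance, so this subarc is injective. Hence its $1$-dimensional Hausdorff measure equals its length, and $\mu(B(x,r))\ge r/8$. This gives $\mu(B(x,r))\simeq r$ with constants depending only on $C$.

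\emph{$1$-Poincar\'e inequality.} Fix a ball $B=B(x,r)$, an integrable $u$ on $B$ and a $1$-weak upper gradient $g$ of $u$ in $B$. By the standard approximation, we may take $g$ to be an upper gradient. It suffices to prove
\[
\vint_B|u-u_B|\,d\mu \le \vint_B\vint_B|u(y)-u(z)|\,d\mu(y)\,d\mu(z)\simle r\vint_B g\,d\mu .
\]
For $y,z\in B$, I would connect each of them to the centre $x$ by geodesics. These stay inside $B$ since $d(x,\cdot)<r$ along them. This gives
\[
|u(y)-u(z)|\le \int_{\ga_{y}}g\,ds+\int_{\ga_{z}}g\,ds .
\]
The geodesics $\ga_y$ are injective arcs in $B$, so $\int_{\ga_y}g\,ds\le\int_B g\,d\mathcal H^1=\int_B g\,d\mu$. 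This is the key one-dimensional point: a single curve integral is already bounded by the full measure integral, because the measure is arc length. Averaging over $y,z$ and using $\mu(B)\simeq r$ then yields
\[
\vint_B|u-u_B|\,d\mu\le 2\int_B g\,d\mu\simle r\vint_B g\,d\mu,
\]
with dilation constant $\la=1$.

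\emph{Main obstacle.} The delicate step is justifying the estimate $\int_\ga g\,ds\le\int_B g\,d\mu$ for a geodesic $\ga$ and a Borel $g$. This needs the area formula for injective Lipschitz curves, namely that the push-forward of $ds$ under an injective arc is $\mathcal H^1$ restricted to its image. A second technical point is measurability of $u$ along curves when $g$ is only a weak upper gradient. The exceptional curve family has zero $1$-modulus, and I would handle it via the definition of $1$-modulus: choose $\rho\in L^1$ with $\int_\ga\rho\,ds=\infty$ on that family, and apply the argument to $g+\eps\rho$, letting $\eps\to0$.
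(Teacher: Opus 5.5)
Your proposal is correct and follows essentially the paper's argument. You get the lower Ahlfors bound from a geodesic issuing from the centre, and the $1$-Poincar\'e inequality with $\la=1$ from integrating an upper gradient along geodesics from the centre, each such integral being bounded by $\int_B g\,d\mu$ because $\mu$ is arc length. The only cosmetic difference is that the paper compares $u(z)$ directly with $u(x)$ at the centre and then replaces $u(x)$ by $u_B$ via the triangle inequality, which is equivalent to your double average through the centre.
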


We normalize the $1$-dimensional Hausdorff measure  so that $\mu([0,1])=1$.

\begin{proof}
Let $B=B(x,r)$ be a ball with $r < 2 \diam X$.
As $X$ is geodesic there is $y \in X$ with $d(x,y) = \tfrac14 r$,
and a geodesic $\ga_{xy}$ from $x$ to $y$
(i.e.\ $\ga_{xy}$ is a curve with length equal to $d(x,y)$).
Hence $\mu(B)  \ge \mu(\ga_{xy}) = \tfrac14 r$
and thus $\mu$ is Ahlfors $1$-regular.

Moreover, for each $z \in B$ there is a geodesic $\ga_{xz} \subset B$ from $x$ to $z$.
Let $u$ be an 
integrable function  in $B$ with an upper gradient $g$ in $B$.
Then
\[
|u(x)-u(z)| \le  \int_{\ga_{xz}} g \, ds
    = \int_{\ga_{xz}} g \,d\mu
    \le  \int_{B} g \,d\mu
\]
Hence
\[
\int_{B} |u(x)-u(z)|\,d\mu(z)
  \le \mu(B) \int_{B} g \,d\mu
  \le Cr \int_{B} g \,d\mu.
\]
Finally, a standard argument based on the triangle inequality
makes it possible to replace $u(x)$ 
on the left-hand side by $u_B$,
i.e.\ $\mu$ supports a global $1$-Poincar\'e inequality
with dilation constant $1$.
\end{proof}

\begin{example}  \label{ex-factorial2}
We shall construct
an example of a metric graph $X$ and a point $x_0=0 \in X$ 
with the following properties:
\begin{enumerate}
\item \label{e-1}
$X$ is a geodesic space consisting of a
countable union of arcs and equipped with the arc length 
(i.e.\ normalized $1$-dimensional Hausdorff) 
measure $\mu$. 
\item \label{e-2}
$\mu$ is Ahlfors $1$-regular, and in particular globally doubling.
\item \label{e-3}
$\mu$ supports a global $1$-Poincar\'e inequality with dilation $\la=1$.
\item \label{e-4}
$X \setm \{0\}$ is locally connected at $0$.
\item \label{e-5}
$X $ is not sequentially annularly quasiconvex at $0$.
\item \label{e-6} 
For each $1<p<\infty$, 
there is a bounded nonconstant \p-harmonic function $u$ in $X \setm \{0\}$
such that 
\[
 \liminf_{x\to0} u(x) < \limsup_{x\to0} u(x). 
\]
In particular, $\{0\}$ is not removable for
bounded \p-harmonic (or bounded $Q$-quasiharmonic) functions
in $X \setm \{0\}$.
\end{enumerate}

Let $\al>1$. 
Consider the weighted metric graph $X$ constructed as follows.
The points 
\[
\zpm_n=\pm \sum_{k=n+1}^\infty \frac{1}{(k!)^\al}, \quad 
n=0,1,\dots, 
\] 
are placed on the real line in the natural way and the interval 
$I:=[\zmin_0,\zpl_0]$ is 
equipped with the Euclidean distance and the arc length
measure.
The open interval between $\zpm_{n-1}$ and $\zpm_n$ will be
denoted $\Ipm_n$, it has both length and measure 
\[
l_n=  \mu(\Ipm_n) =   \frac{1}{(n!)^\al}, \quad n=1,2,\dots.
\]
Note that for $n\ge1$,
\begin{equation*}
\frac{1}{((n+1)!)^\al} \le
\zpl_n = \sum_{k=n+1}^\infty \frac{1}{(k!)^\al}
< \frac{1}{((n+1)!)^\al}  \sum_{j=0}^\infty \frac{1}{2^{\al j}}
= \frac{2^\al}{(2^\al-1)((n+1)!)^\al}.
\end{equation*}

Next, each $\zpl_n$ is connected to $\zmin_n$ by a path $\ga_n$ of
length $2l_n$, $n=1,2,\dots$\,. 
Points along the path $\ga_n$, in the direction from $\zmin_n$ to $\zpl_n$,
will be labelled by $t\in [-l_n,l_n]$ with $t=0$ corresponding
to the midpoint $z_n$ of $\ga_n$.
We equip $\ga_n$ with the length metric and the 
arc length measure.
Finally, let 
\[
X= I \cup \bigcup_{n=1}^\infty \ga_n,
\]
equipped with the length metric and the 
arc length 
measure $\mu$.
(Here $\ga_n \cap I=\{\zpm_n\}$ and $\ga_n \cap \ga_m=\emptyset$ 
for $n \ne m$.)

For other similar spaces, the
endpoints $\zmin_0$ and $\zpl_0$ of $I$ are either identified, or
connected by a curve $\ga_0$ of length $2l_0\ge 2\zpl_0$,
or extended through rays towards $\pm\infty$,
in which case $X=\R \cup \bigcup_{n=1}^\infty \ga_n$.

Note that $X$ is not locally annularly quasiconvex around $0$, since the
curves $\ga_n$ are too long, or more precisely since $l_n/\zpl_n \to \binfty$.
On the other hand $X \setm \{x_0\}$ is locally connected at $x_0$.
Thus \ref{e-1}, \ref{e-4} and \ref{e-5} are satisfied.

We now turn to the Ahlfors $1$-regularity of $m$.
Let $B=B(x,r)$ with $r < 2 \diam X$.
As $X$ is geodesic, we see that
$\mu(B(x,r)) \ge  r$ if $r \le \frac{1}{2} \diam X$,
and thus $\mu(B(x,r)) \ge  \frac{1}{4}r$ when  $r <2 \diam X$.

For the reverse inequality, we first let
\[
\Xplus=\{x \in X : \text{there is a geodesic $\ga$ from $0$ to $x$
  such that } \ga \cap (0,\infty) \ne \emptyset\}.
\]
We may assume, without loss of generality, that $x \in \Xplus$.
Let $y \in \clB$ be a point closest to $0$, i.e.
\[
      d(y,0)=\min_{z \in \clB} d(z,0).
\]
Then for every $t>0$ there are at most two points $z \in B \cap \Xplus$
with $d(y,z)=t$.
Thus $\mu(B) \le 2 \mu(B \cap \Xplus) \le 4r$,
i.e.\ $\mu$ is Ahlfors $1$-regular and \ref{e-2} has been shown.
Proposition~\ref{prop-PI-arc-length} now implies 
\ref{e-3}.

It remains to 
show \ref{e-6}.
  Let $1<p<\infty$.
  We shall construct a bounded \p-harmonic function $u$ in $X\setm \{0\}$ such
that 
\begin{equation} \label{eq-liminf<limsup}
\liminf_{x\to0} u(x) < \limsup_{x\to0} u(x).
\end{equation}
Such a function has to be linear on each curve $\ga_n$ and each subinterval $\Ipm_n$,
and in addition satisfy a certain \p-harmonicity condition
at $\zpm_n$, $n=1,2,\dots$\,, which is similar to the condition
for \p-harmonic functions on graphs with unit length edges, 
see e.g.\ \cite[Section~A.5]{BBbook}, but here we also need
to take into consideration the lengths of the curves meeting at $\zpm_n$, see below.

For this, let 
$u(z_n) = 0$, $n=1,2,\dots$\,, 
\[
u(\zpl_0) = -u(\zmin_0) =: a_0 \ge 0
 \quad 
\text{and} \quad
u(\zpl_1) =-u(\zmin_1) =  : a_1 >0
\] 
be given, with $a_1 \ge a_0$.
We shall write $a_n=u(\zpl_n)=-u(\zmin_n)$ and extend $u$
linearly to each subinterval $\Ipm_n$, as well as to each $\ga_n$, $n=1,2,\dots$\,:
\[
u(t) = a_n (n!)^\al t \quad \text{along } \ga_n.
\]
The constants $a_n$ will be determined later by a recursive formula.

To ensure that $u$ satisfies the \p-harmonicity condition at $\zpl_n$
(and by symmetry at $\zmin_n$),
consider the minimal upper gradients (derivatives) of $u$ on 
$\Ipl_{n+1}$, $\Ipl_{n}$ and $\ga_n$.
With $|\ga|$ denoting the length of a curve $\ga$, we have
\[
g_u = \frac{2a_n}{|\ga_n|} =
 a_n (n!)^\al \quad \text{on } \ga_n
\]
and
\[
g_u = \frac{(u(\zpl_n)-u(\zpl_{n-1}))}{|\Ipl_n|} = (a_n-a_{n-1})(n!)^\al
\quad \text{on } \Ipl_{n},
\] 
while $g_u=(a_{n+1}-a_n)((n+1)!)^\al$ on $\Ipl_{n+1}$.

The compatibility condition for \p-harmonicity, see e.g.\
\cite[Section~A.5]{BBbook}, thus gives us
the recursive formula at each $\zpm_n$, $n=1,2,\dots$\,,
\begin{equation} \label{eq-recursive}
\bigl( (a_{n+1}-a_n)((n+1)!)^\al \bigr)^{p-1} 
  = \bigl( (a_n-a_{n-1})(n!)^\al)^{p-1} + \bigl(a_n (n!)^\al \bigr)^{p-1},
\end{equation}
or equivalently,
\[
a_{n+1} = a_n + \frac{\bigl( (a_n-a_{n-1})^{p-1} + a_n^{p-1} \bigr)^{1/(p-1)}}{(n+1)^\al},
\quad n=1,2,\dots.
\]
Since $a_1\ge a_0$ and $a_1>0$, 
we see that $\{a_n\}_{n=1}^\infty$ is a strictly increasing 
(and thus nonconstant) sequence.
To see that it is bounded we note that as $a_{n-1}\ge0$,
\[
a_{n+1} \le a_n \biggl( 1+\frac{2^{1/(p-1)}}{(n+1)^\al} \biggr) \le \dots \le
     a_1 \prod_{k=1}^n \biggl( 1+\frac{2^{1/(p-1)}}{(k+1)^\al} \biggr)
   < a_1 \prod_{k=1}^\infty \biggl( 1+\frac{2^{1/(p-1)}}{(k+1)^\al} \biggr).
\]
The infinite product converges since $\al>1$ and hence 
\begin{equation*} 
0 <a:= \lim_{n\to\infty} a_n <\infty.
\end{equation*}
If $\zpl_0$ is identified with $\zmin_0$, we need to have
$a_0=0$.
If they are connected by a curve $\ga_0$ of length $2l_0$, we need to
choose $a_1 > a_0 >0$ so that
\[
  \frac{a_1-a_0}{l_1}=\frac{a_0}{l_0}
\]
and extend $u$ linearly along $\ga_0$.  
Finally, if they are extended through rays towards $\pm\infty$,
we need to choose $a_1=a_0>0$.
In this case we extend $u$ constantly along the two rays,
so that $u$ remains bounded.
Moreover, $X$ is \p-parabolic in this case.
 
In all cases, $u$ is bounded and
\[
\liminf_{x \to 0} u(x)=-a  < a =     \limsup_{x \to 0} u(x),
\]
i.e.\  \eqref{eq-liminf<limsup} holds.
\end{example}

\begin{example} \label{ex-factorial3}
Let $X$ be the space constructed in Example~\ref{ex-factorial2} but
with $\al=1$, i.e.\ 
\[
\zpm_n=\pm \sum_{k=n+1}^\infty \frac{1}{k!}, \quad n=0,1,\dots.
\] 
As before, $X$ is equipped with the length metric and the Lebesgue
measure, which makes $X$ Ahlfors $1$-regular.
The global $1$-Poincar\'e inequality is also
supported, by Proposition~\ref{prop-PI-arc-length}.
Depending on the choices in the construction 
(see Example~\ref{ex-factorial2}),
$X$ is either bounded or \p-parabolic.

Again, $X$ is not locally annularly quasiconvex at $0$,
but $X \setm \{0\}$ is locally connected at $0$.
Thus the properties \ref{e-1}--\ref{e-5} in Example~\ref{ex-factorial2}
are satisfied here too.
In contrast to Example~\ref{ex-factorial2} we will here
show that $\{0\}$ is removable for bounded $2$-harmonic functions in $X\setm\{0\}$.
More precisely, we will show that
if $u$ is a $2$-harmonic function in some open punctured neighbourhood $U \subset X \setm \{0\}$
of $0$, then either:
\newlength{\saveitemsep}%
\setlength{\saveitemsep}{\itemsep}%

\begin{enumerate}
\renewcommand{\theenumi}{\textup{(\roman{enumi})}}%
\setlength{\itemsep}{\saveitemsep}%
\item
$\displaystyle \lim_{x \to 0} u(x)$ exists and is finite, or 
\item
$ \displaystyle 
\limsup_{x\to0} u(x)=\infty 
\text{ and } 
\liminf_{x\to0} u(x)=-\infty.
$
\end{enumerate}

Let $u$ be a $2$-harmonic function, defined in a punctured open neighbourhood 
\[
U=\bigcup_{n=N+1}^\infty (\Ipl_n \cup \Imin_n\cup\ga_{n)}
\]
of~$0$ for some integer $N\ge1$.
We shall show that either $\lim_{U \ni x \to 0} u(x)$ exists and is finite,
or $u$ is unbounded at $0$ with
\[
\limsup_{x\to0} u(x)=\infty \quad \text{and}  \quad
\liminf_{x\to0} u(x)=-\infty.
\]

Since $u$ is linear on $\Ipm_{N+1}$, it has finite limits at $\zpm_{N}$.
We denote these limits by $\apm_{N}$.
By subtracting from $u$ the unique bounded solution $v$ of the Dirichlet problem
in $U$ with boundary values $v(\zpm_{N})=\apm_{N}$ and $v(0)=0$,
we can assume that $\apm_{N}=0$.
Note that $v$ is bounded and that $\lim_{x\to0}v(x)=0$ because 
$C_2(\{0\})>0$ and thus $0$ is
a regular boundary point (by the Kellogg property 
Theorem~\ref{thm-Kellogg}).
For $n=N+1,N+2,\dots$\,, write 
\[
\apm_n:= u(\zpm_n) \quad \text{and} \quad d_n:= \apl_n-\amin_n.
\]
The compatibility condition for $2$-harmonicity at $\zpm_n$, $n\ge N+1$,
gives as in~\eqref{eq-recursive} that    
\begin{equation}    \label{eq-apl-n+1-n}
(\apl_{n+1}-\apl_n)((n+1)!)
    = (\apl_n-\apl_{n-1})(n!)  + (\apl_n-\amin_n) \frac{n!}{2},
\end{equation}
and similarly, 
\begin{equation}    \label{eq-amin-n+1-n}
(\amin_{n+1}-\amin_n)((n+1)!)
    = (\amin_n-\amin_{n-1})(n!) + (\amin_n-\apl_n) \frac{n!}{2}.
\end{equation}
Dividing by $(n+1)!$ and subtracting \eqref{eq-amin-n+1-n} from \eqref{eq-apl-n+1-n},
we obtain for  all integers $n\ge N+1$,
\begin{equation}   \label{eq-recursion-d-n}
d_{n+1} = \biggl( 1 +\frac{2}{n+1} \biggr) d_n - \frac{d_{n-1}}{n+1},
\end{equation}
where $d_{N}=0$.
We shall now distinguish two cases.

{\bf Case~1:}
$d_{N+1}=0$.
Then \eqref{eq-recursion-d-n} clearly implies that $d_n=0$
for all $n\ge N$. 
It follows that $u$ is constant on each $\ga_n$ and linear both
in $[\zmin_{N},0]$ and in $[0,\zpl_{N}]$.
Thus the one-sided limits $\lim_{x\to0\pm}u(x)$
exist and are finite. They are also equal because $d_n=0$ for large $n$.

{\bf Case~2:}
$d_{N+1} \ne 0$. Then
we may
without loss of generality assume that $d_{N+1}>0$.
Since $d_N=0$, it 
is then easily shown by induction that for $n\ge N+1$, 
\[
d_{n+1} = \biggl( 1 +\frac{1}{n+1} \biggr) d_n + \frac{d_n-d_{n-1}}{n+1} >
d_n,
\]
and hence 
\begin{equation}   \label{eq-dn-to-infty}
d_n \ge  \prod_{k=N+2}^n\biggl( 1 +\frac{1}{k} \biggr) d_{N+1} \to \infty,
\quad \text{as }n\to\infty,
\end{equation}
i.e.\ $u$ is unbounded at $0$.

Moreover, letting $s_n=\apl_n+\amin_n$, 
we have by adding~\eqref{eq-apl-n+1-n} and~\eqref{eq-amin-n+1-n} that
\[
s_n- s_{n-1} = \frac{s_{n-1}-s_{n-2}}{n} = 
\dots = \frac{(N+1)! (s_{N+1}-s_N)}{n!}.
\]
This implies that $\{s_n\}_{n=1}^\infty$ is a Cauchy sequence
and hence the limit $\lim_ {n\to\infty} s_n$ exists and is finite.
It then follows from~\eqref{eq-dn-to-infty} that
\[
\apl_n = \tfrac12 (s_n+d_n) \to\infty
\quad \text{and} \quad
\amin_n = \tfrac12 (s_n-d_n) \to -\infty,
\quad \text{as } n\to\infty,
\]
i.e.\ $\limsup_{x\to0} u(x)=\infty$
while $\liminf_{x\to0} u(x)=-\infty$.

Finally, for removability, consider a bounded $2$-harmonic 
function $u$ in $G:= X\setm\{0\}$.  By the above, it has a (finite)
limit at $0$.  Since $X$ is bounded or $2$-parabolic, 
$u$ must be constant, by Theorem~\ref{thm-main-char-intro}.
Hence $\{0\}$ is removable for bounded $2$-harmonic functions in 
$X\setm\{0\}$.
\end{example}

\end{document}